\numberwithin{equation}{section}
\newcommand{\rd}{{\rm d}}
\newcommand{\rE}{{\rm E}}
\newcommand{\rG}{{\rm G}}
\newcommand{\rU}{{\rm U}}
\newcommand{\ua}{{\underline a}}
\newcommand{\bm}{{\bf m}}
\newcommand{\bP}{{\bf P}}
\newcommand{\cE}{\mathcal{E}}
\newcommand{\cF}{\mathcal{F}}
\newcommand{\cH}{\mathcal{H}}
\newcommand{\cM}{\mathcal{M}}
\newcommand{\cT}{\mathcal{T}}
\newcommand{\cX}{\mathcal{X}}
\newcommand{\sA}{\mathscr{A}}
\newcommand{\sB}{\mathscr{B}}
\newcommand{\sC}{\mathscr{C}}
\newcommand{\sG}{\mathscr{G}}
\newcommand{\sM}{\mathscr{M}}
\newcommand{\sS}{\mathscr{S}}
\newcommand{\sY}{\mathscr{Y}}
\newcommand{\fg}{{\mathfrak g}}
\newcommand{\fr}{{\mathfrak r}}
\newcommand{\N}{\mathbb{N}}
\newcommand{\Z}{\mathbb{Z}}
\newcommand{\R}{\mathbb{R}}
\newcommand{\U}{{\rm U}}
\newcommand{\PU}{{\bP\U}}
\newcommand{\Pic}{{\rm Pic}}
\renewcommand{\P}{\mathbb{P}}
\newcommand{\Crit}{\mathrm{Crit}}
\newcommand{\cEnd}{\cE nd}
\renewcommand{\epsilon}{\varepsilon}
\newcommand{\Hol}{\mathrm{Hol}}
\newcommand{\delbar}{\bar \del}
\newcommand{\del}{\partial}
\newcommand{\im}{\mathop{\mathrm{im}}}
\renewcommand{\Im}{\mathop{\mathrm{Im}}}
\newcommand{\into}{\hookrightarrow}
\newcommand{\iso}{\cong}
\renewcommand{\Re}{\mathop{\mathrm{Re}}}
\newcommand{\spec}{\mathrm{spec}}
\newcommand{\tr}{\mathop{\mathrm{tr}}\nolimits}
\newcommand{\rk}{\mathop{\mathrm{rk}}\nolimits}
\newcommand{\Ext}{\mathrm{Ext}}
\newcommand{\Bl}{\mathrm{Bl}}
\newcommand{\qandq}{\quad\text{and}\quad}
\newcommand{\qwithq}{\quad\text{with}\quad}
\newcommand{\co}{\mskip0.5mu\colon\thinspace}
\def\<{\mathopen{}\left<}
\def\>{\right>\mathclose{}}
\def\({\mathopen{}\left(}
\def\){\right)\mathclose{}}
\def\wtilde{\widetilde}
\definecolor{gold}{rgb}{0.85,.66,0}
\definecolor{cherry}{rgb}{0.9,.1,.2}
\definecolor{burgundy}{rgb}{0.8,.2,.2}
\definecolor{orangered}{rgb}{0.85,.3,0}
\definecolor{orange}{rgb}{0.85,.4,0}
\definecolor{olive}{rgb}{.45,.4,0}
\definecolor{lime}{rgb}{.6,.9,0}
\definecolor{green}{rgb}{.2,.7,0}
\definecolor{grey}{rgb}{.4,.4,.2}
\definecolor{brown}{rgb}{.4,.3,.1}
\newtheorem{theorem}[equation]{Theorem}
\newtheorem{prop}[equation]{Proposition}
\newtheorem{cor}[equation]{Corollary}
\newtheorem{lemma}[equation]{Lemma}
\numberwithin{substep}{step}
\numberwithin{subcase}{case}
\theoremstyle{remark}
\newtheorem{remark}[equation]{Remark}
\theoremstyle{definition}
\newtheorem{definition}[equation]{Definition}
\newtheorem{example}[equation]{Example}
\newtheorem{summary}[equation]{Summary}
\newcommand{\squig}{\rightsquigarrow}
\newcommand{\squigd}{\overset{\delta}{\rightsquigarrow}}
\newcommand{\kd}{\Sigma}
\newcommand{\ignore}[1]{}
\newcommand{\gtmfd}{$\rG_{2}$\nobreakdash-\hspace{0pt}manifold}
\newcommand{\ie}{\emph{i.e.} }
\newcommand{\cf}{\emph{cf.} }
\newcommand{\wt}{\widetilde}
\newcommand{\nres}{R}
\newcommand{\hk}{hyper-K\"ahler\ }
\DeclareMathAlphabet{\df}{U}{eus}{m}{n}
\DeclareMathAlphabet{\matheur}{U}{eur}{m}{n}
\newcommand{\hkr}{\fr}
\newcommand{\kclass}{\matheur{k}}
\newcommand{\gtstr}{$\rG_{2}$\nobreakdash-\hspace{0pt}structure}
\DeclareMathOperator{\Endr}{\mathscr{E}\textit{nd}}
\DeclareMathOperator{\res}{res}
\newcommand{\acylcy}{$\mathrm{ACylCY}^3$}
\newcommand{\acyl}{$\mathrm{ACyl}$}
\author{Henrique N. S\'a Earp}
\address{University of Campinas (Unicamp)}
\thanks{The author is supported by São Paulo Research Foundation (FAPESP) grants 2017/06298-2 and 2017/20007-0 and Brazilian National Research Council (CNPq) grant 312390/2014-9}
\title{Current progress on $\rG_2$--instantons over twisted connected sums}
\date{\today}
\begin{document}

\maketitle\begin{abstract}
  We review a method to construct $\rG_2$--instantons over compact $\rG_2$--manifolds arising as the twisted connected sum of a matching pair of Calabi-Yau $3$-folds with cylindrical end, based on the series of articles \cite{SaEarp2015a,SaEarp2015b,Jardim2017,Menet2017} by the author and others.
  The construction is based on gluing  $\rG_2$--instantons obtained from holomorphic bundles over such building blocks, subject to natural compatibility and transversality conditions. Explicit examples are obtained from matching pairs of semi-Fano $3$-folds by an algorithmic procedure based on the Hartshorne-Serre correspondence.
\end{abstract}


\tableofcontents

\section{Introduction}

This text addresses the existence problem of $\rm G_2$--instantons over twisted connected
sums, as formulated by Walpuski and myself in  \cite{SaEarp2015b}, and the production of
the first examples to date of solutions obtained by a nontrivially \emph{transversal}
gluing process \cite{Menet2017}.  It is aimed at graduate students and researchers in nearby areas who might be interested in a condensed exposition of the main results  spread over my articles \cite{SaEarp2015a,SaEarp2015b,Jardim2017,Menet2017} with Walpuski, Menet at al. and Menet-Nordstr\"om. By no means should this survey convey the impression that the subject is somehow closed or even in its best notational setup; indeed there is much ongoing work on this topic. A number of important questions remain open and the most impressive expected results in this theory are surely still ahead of us.

Recall that a \emph{$\rm G_2$--manifold} $(X,g_{\phi})$ is a Riemannian $7$--manifold together with a torsion-free \emph{$\rm G_2$--structure}, that is, a non-degenerate closed $3$--form $\phi$ satisfying a certain non-linear partial differential equation;
in particular, $\phi$ induces a Riemannian metric $g_\phi$ with
$\Hol(g_{\phi})\subset \rm G_2$ \cite{Joyce1996}*{Part I}.
A \emph{$\rm G_2$--instanton} is a connection $A$ on some $G$--bundle $E\to X$ such that $F_A\wedge*\phi=0$.
Such solutions have a well-understood elliptic deformation theory of index
$0$ \cite{SaEarp2009}, and some form of `instanton count' of their
moduli space is expected to yield new invariants of $7$--manifolds, much
in the same vein as the Casson invariant and instanton Floer homology from flat connections on $3$--manifolds \cite{Donaldson1990, Donaldson1998}. While some important analytical groundwork has been established towards that goal \cite{Tian2000},
major compactification issues remain and this suggests that a thorough understanding of
the general theory might currently have to be postponed in favour of  exploring a good number of functioning examples. This  article proposes a method to construct  such examples.

Readers interested in a more detailed account of instanton theory on $\rm G_2$--manifolds are kindly
 referred to the introductory sections of \cite{SaEarp2015a,SaEarp2015b} and works cited therein.

An important method to produce examples of compact  $\rG_2$--manifolds with $\Hol(g)=\rG_2$ is the \emph{twisted connected sum construction}, suggested by Donaldson, pioneered by Kovalev~\cite{Kovalev2003} and later extended and improved by Kovalev--Lee~\cite{Kovalev2011} and Corti--Haskins--Pacini-Nordström~\cite{Corti2015}.
Here is a brief summary of this construction:
A \emph{building block} consists of a projective $3$--fold $Z$ and a smooth anti-canonical $K3$ surface $\Sigma\subset Z$ with trivial normal bundle (cf. Definition~\ref{def:building-block}).
Given a choice of hyperkähler structure $\(\omega_I,\omega_J,\omega_K\)$ on $\Sigma$ such that $[\omega_I]$ is the restriction of a Kähler class on $Z$, one can make $V:=Z\setminus \Sigma$ into an asymptotically cylindrical (\acyl) Calabi--Yau $3$--fold, that is, a non-compact Calabi--Yau $3$--fold with a tubular end modelled on $\R_+\times  \mathbb{S}^1\times \Sigma$, see Haskins--Hein--Nordström~\cite{Haskins2015}.
Then $Y:= \mathbb{S}^1\times V$ is an \acyl $\rG_2$--manifold with a tubular end modelled on  $\R_+\times \mathbb{T}^2\times \Sigma$.


When a pair $(Z_\pm,\Sigma_\pm)$ of building blocks \emph{matches} `at infinity', in a suitable sense, one can glue $Y_\pm$ by interchanging the $ \mathbb{S}^1$--factors.
This yields a simply-connected compact $7$--manifold $Y$ together with a family of torsion-free $\rG_2$--structures $(\phi_T)_{T \geq T_0}$, see Kovalev \cite{Kovalev2003}*{\S~4}.
From the Riemannian viewpoint $(Y,\phi_T)$ contains a ``long neck'' modelled on $[-T,T]\times \mathbb{T}^2\times \Sigma_+$; one can think of the twisted connected sum as reversing the degeneration of the family of $\rG_2$--manifolds that occurs as the neck becomes infinitely long.
In \cite{Corti2013,Corti2015,Kovalev2003}, building blocks $Z$ are produced
by blowing up
Fano or semi-Fano 3-folds along the base curve $\sC$ of an anticanonical
pencil
(cf. Proposition \ref{FanoBlock}). By
understanding the deformation theory of pairs $(X,\kd)$ of semi-Fanos $X$
and anticanonical K3 divisors $\kd \subset X$, one can produce hundreds of
thousands of pairs with the required matching (see \S \ref{subsec:match}).

This construction raises a natural programme in gauge theory, aimed at
constructing $\rm G_2$-instantons over compact manifolds obtained as a TCS,
originally
outlined in \cite{SaEarp2009}. If $(Z,\Sigma)$ is a building block and $\cE\to
Z$ holomorphic bundle such that $\cE|_\Sigma$ is stable, then $\cE|_\Sigma$
carries a unique ASD instanton compatible with the holomorphic structure~\cite{Donaldson1985}.
In this situation $\cE|_V$ can be given a Hermitian--Yang--Mills (HYM) connection
asymptotic to the ASD instanton on $\cE|_\Sigma$    \cite[Theorem 58]{SaEarp2015a},
whose pullback over $V$ to $ \mathbb{S}^1\times V$ is a \emph{$\rG_2$--instanton},
i.e., a connection $A$ on a $G$--bundle over a $\rG_2$--manifold such that
$F_A\wedge\psi=0$ with $\psi:=*\phi$. It is possible to glue 
a hypothetical pair of such solutions into a $\rm G_2$-instanton over the
\emph{compact} twisted connected sum, provided a number of technical conditions
are met (cf. Theorem \ref{thm:itcs}).

However, the hypotheses of  our  $\rm G_2$-instanton gluing theorem
are rather restrictive and it is not immediate to obtain suitable holomorphic bundles   $\cE_\pm\to Z_\pm$ over the
matching blocks. In particular, a transversality condition over the $K3$ surface $\Sigma_\pm$ `at infinity' requires
some more thorough understanding of the deformation theory of data $(Z_\pm, \kd_\pm, \cE_\pm)$.
Assuming the so-called \emph{rigid} case in which the instantons
that
are glued are isolated points in their moduli spaces, Walpuski \cite{Walpuski2016} was able to exhibit one such example. The trade-off comes in the forms of further constraints to the matching problem for the building
blocks, which makes that \emph{ad hoc} approach difficult to generalise.

Finally, in \cite{Menet2017}, we use the Hartshorne-Serre construction (cf. Theorem \ref{thm: Hartshorne-Serre}) to obtain families of bundles over
the
building blocks. Our method allows one  to generate a  large number
of examples for which the gluing is \emph{nontrivially transversal} (see \S \ref{sec: general algorithm}).
These are particularly relevant, because they open the possibility of
obtaining a conjectural instanton number on the $\rm G_2$-manifold $X$ as
a genuine Lagrangian intersection
within the moduli space $\sM_{S_+}$ over the $K3$ cross-section along the neck,
which can be addressed
by enumerative methods in the future. 

\newpage
\section{Background on $\rG_2$-geometry}

Let us recall some $\rG_2$-trivia, following the exposition in \cite{SaEarp2014}; of course the immortal introductory references for the topic are  \cite{Bryant1985,Salamon1989,Joyce2000}.
Recall that a $\rG_2-$\emph{structure} on an oriented smooth $7-$manifold $Y$  is a smooth $3-$form $\phi \in \Omega ^{3}\left( Y\right) $ such
that, at every point $p\in Y$, one has $\phi  _{p}=r_{p}^{\ast }\left(
\phi _{0}\right) $ for some frame $r_{p}:T_{p}Y\rightarrow \mathbb{R}^{7}$ and (with the sign conventions of \cite{Salamon1989})  
\begin{equation}
\label{eq: G2 3-form}
         \phi_0 
        =e^{567}+\omega_1\wedge e^{5}
        +\omega_2\wedge e^{6}
        +\omega_3\wedge e^{7}
\end{equation}
with
\begin{displaymath}
        \omega_1= e^{12} - e^{34}, \quad 
        \omega_2= e^{13} - e^{42}, \quad 
        \text{and}\quad
        \omega_3= e^{14} - e^{23}.
\end{displaymath}
Moreover, $\phi$ determines a Riemannian metric $g(\phi)$ induced by the pointwise inner-product
\begin{equation}        \label{eq: phi0 gives inner product}
        \left\langle u,v\right\rangle e^{1...7}
        :=-\frac{1}{6}
        \left( u\lrcorner \phi_{0}\right) 
        \wedge 
        \left( v\lrcorner \phi _{0}\right) 
        \wedge 
        \phi _{0}.
\end{equation}
under which $\ast_\phi\phi$ is given pointwise by
\begin{equation}
\label{eq: G2 4-form}
        \ast \phi_0 
        =
        e^{1234}-\omega_1\wedge e^{67}
        -\omega_2\wedge e^{75}
        -\omega_3\wedge e^{56}
        .
\end{equation}
Such a pair $\left( Y,\phi \right) $ is a  \emph{$\rG_2-$manifold} if $\rd\phi =0$ and $\rd\ast _{\phi }\phi =0$. Notice that the co-closed condition is nonlinear in $\phi$, since the Hodge star depends on the metric and hence on $\phi$ itself. 
\subsection{Gauge theory on $\rG_2$-manifolds}

The $\rG_2-$structure allows for a $7-$dimensional analogue of conventional
Yang-Mills theory, yielding a notion analogous to (anti-)self-duality for $2-$forms.
Working in $\R^7$ under the usual identification between $2-$forms and matrices, we have $\mathfrak{g}%
_{2}\subset \mathfrak{so}\left( 7\right) \simeq \Lambda ^{2}$, so we define  $\Lambda _{14}^{2}:= \mathfrak{g}_{2}$
and $\Lambda _{7}^{2}$ its orthogonal complement in $\Lambda ^{2}$:%
\begin{equation}        \label{eq:split}
        \Lambda ^{2}
        =
        \Lambda _{7}^{2}\oplus \Lambda _{14}^{2}.  
\end{equation}%
It is easy to check that $\Lambda _{7}^{2}=\left\langle e_1\lrcorner\phi_0,\dots,e_7\lrcorner\phi_0 \right\rangle$, hence the orthogonal projection onto $\Lambda _{7}^{2}$ in (\ref{eq:split}) is given by 
\begin{eqnarray*}        \label{eq Lphi0}
        L_{\ast \phi _{0}} : \Lambda ^{2} 
        &\rightarrow& \Lambda ^{6}\\ 
        \eta &\mapsto& \eta \wedge \ast \phi _{0}
\end{eqnarray*}
in the sense that
\cite[p. 541]{Bryant1985}
\begin{equation}        \label{prop +-projection}
        L_{\ast \phi _{0}} | _{\Lambda_{7}^{2}}
        :\Lambda _{7}^{2} \: 
        \tilde{\rightarrow} \: \Lambda^{6} 
        \quad\text{and}\quad
        L_{\ast \phi _{0}}|_{\Lambda_{14}^{2}}=0.
\end{equation}
Furthermore, since (\ref{eq:split}) splits $\Lambda^2$ into irreducible representations of $\rG_2$, a little inspection on generators reveals that $\left( \Lambda^{2}\right)_{^{\;7}_{14} }$ is respectively the $_{+1}^{-2}-$eigenspace of the  $%
\rG_{2}-$equivariant linear map%
        \begin{eqnarray*}
        T_{\phi_0} \;: \; \Lambda ^{2} &\rightarrow& \Lambda ^{2} \\
        \eta &\mapsto &T_{\phi_0}\eta :=\ast \left( \eta \wedge \phi _{0}\right) .
\end{eqnarray*}

\subsubsection{Yang-Mills formalism on $\rG_2$-manifolds}
Consider now a $G$--bundle $E\rightarrow Y$ over a compact $\rG_2-$manifold $\left(
Y,\phi \right) $; the curvature $F:=F_{A} $ of some connection $A$ decomposes according to
the splitting (\ref{eq:split}):%
\begin{displaymath}
        F_A=F_{7}\oplus F_{14},
        \qquad  F_{i}\in \Omega_{i}^{2}(Y,\mathfrak{g}_{E}), \;i=7,14,
\end{displaymath}%
where $\mathfrak{g}_{E}$ denotes the adjoint bundle associated to $E$. The $L^2-$norm of $F_{A}$ is the \emph{Yang-Mills functional}, which therefore has two corresponding components:%
\begin{equation}        \label{YM(A)}
        \sY \left( A\right) 
        := \Vert F_{A}\Vert ^{2}
        =\Vert F_7\Vert ^{2}
        +\Vert F_{14}\Vert ^{2}.
\end{equation}

It is well-known that the values of $\sY\left( A\right) $ can be related to a certain
characteristic class of the bundle $E$, given (up to choice of orientation) by%
\begin{displaymath}
        \kappa \left( E\right) :=-\int_{Y}%
        \tr\left( F_{A}^{2}\right) \wedge \phi .
\end{displaymath}%
Using the property $d\phi =0$, a standard argument of Chern-Weil theory \cite{Milnor1974} shows that the de Rham class $\left[ \tr\left(
F_{A}^{2}\right) \wedge \phi \right]$ is independent of $A$, thus the
integral is indeed a topological invariant. The eigenspace decomposition of $T_{\phi}$  implies (up to a sign)%
\begin{displaymath}
        \kappa \left( E\right)
        =
        -2\left\Vert F_{7}\right\Vert^{2}+\left\Vert F_{14}\right\Vert ^{2},
\end{displaymath}
and combining with (\ref{YM(A)}) we get
\begin{displaymath}
        \sY(A)=-\frac{1}{2}\kappa \left( E\right)+\frac{3}{2}\Vert F_{14}\Vert ^{2} 
        =\kappa (E)+3\left\Vert F_{7}\right\Vert ^{2}.
\end{displaymath}%
Hence $\sY\left( A\right) $ attains its absolute minimum at a
connection whose curvature lies either in 
$\Omega_{7}^{2}(Y,\mathfrak{g}_{E})$ or in 
$\Omega_{14}^{2}(Y,\mathfrak{g}_{E})$. Moreover, since $\sY\geq0$, the sign of  $\kappa(E)$ obstructs the existence of one type or the other, so we fix $\kappa(E)\geq0$ and define \emph{$\rG_2-$instantons} as connections with $ F\in\Omega
_{14}^{2}(Y,\mathfrak{g}_{E})$, i.e., such that $\sY(A)=\kappa(E)$. These are precisely the solutions of the 
$\rG_2-$\emph{instanton equation}:
\begin{equation}
\refstepcounter{equation}\tag{\theequation a}
        \label{eq: G2-intanton equation II}
        F_A\wedge\ast\phi=0
\end{equation}
or, equivalently, 
\begin{equation}
\tag{\theequation b}
        \label{eq: G2-intanton equation}
        F_A-\ast\left(F_A\wedge\phi\right)=0.
\end{equation}
If instead  $\kappa(E)\leq0$, we may still reverse orientation and consider
  $ F\in\Omega
_{14}^{2}(Y,\mathfrak{g}_{E})$, but then the above eigenvalues and energy bounds must
 be adjusted accordingly, which amounts to a change of the $(-)$ sign in (\ref{eq: G2-intanton equation}).

\subsubsection{The Chern-Simons functional $\vartheta$}
\label{Subsect Chern-Simmons}

It was pointed out by Simon Donaldson and Richard Thomas in their seminal article on gauge theory in higher dimensions \cite{Donaldson1998} that, formally, $\rG_2$--instantons are rather similar to flat connections over $3$--manifolds; in particular, they are critical points of a Chern--Simons functional and there is hope that counting them could lead to a enumerative invariant for $\rG_2$--manifolds not unlike the Casson invariant for $3$--manifolds, see \cite{Donaldson2011}*{\S 6} and \cite{Walpuski2013}*{Chapter 6}. Although this interpretation has no immediate bearing on the remainder of this material, let us  briefly review the basic formalism, from a purely motivational perspective. 

Given  a bundle over a compact $3-$manifold, with space of connections $\sA$ and gauge group $\sG$, the \emph{Chern-Simons functional} is a multi-valued real function on the quotient $\sB=\sA/\sG$, with integer periods, whose critical points are precisely the flat
connections \cite[\S2.5]{Donaldson2002}.
Similar theories can be formulated in higher dimensions  in the presence of a suitable closed differential form \cite{Donaldson1998,Thomas1997}; e.g. on a $\rG_2-$manifold $(Y,\phi)$, the coassociative
 $4-$form $%
\psi:=\ast \phi $ allows for the definition of a functional of Chern-Simons type\footnote{in fact only the condition $\rd\psi=0$ is required, so the discussion extends to cases in which the $\rG_2-$structure is not necessarily torsion-free.}. Its `gradient', the Chern-Simons $1$-form, vanishes precisely at the $\rG_2-$instantons, hence it detects the solutions to the Yang-Mills equation  \cite{Donaldson2002}. The explicit case of $\rG_2-$manifolds, which we now describe, was  examined in some detail in \cite{SaEarp2009,SaEarp2014}.

The space $\sA$ of connections on  $E\to Y$  is an affine space modelled on $\Omega ^{1}\left( 
\mathfrak{g}_{E}\right) $ so, fixing a reference connection $A_{0}\in 
\sA$,%
\begin{displaymath}
        \sA=A_{0}+\Omega ^{1}\left( Y,\mathfrak{g}_{E}\right) 
\end{displaymath}%
and, accordingly, vectors at  $A\in\sA$
are 1-forms $a,b,\dots\in T_A\sA\simeq\Omega ^{1}\left( Y,\mathfrak{g}_{E}\right) $ and vector fields are maps $\alpha,\beta,\dots:\sA\to\Omega ^{1}\left( Y,\mathfrak{g}_{E}\right)$. In this notation we define the \emph{Chern-Simons functional} by%
\begin{displaymath}       \index{Chern-Simons!functional}
        \vartheta \left( A\right) :=\tfrac{1}{2}\int_{Y}\tr\left( d_{A_{0}}a\wedge         a +\frac{2}{3}a\wedge a\wedge a\right) \wedge \ast \phi ,
\end{displaymath}
fixing $%
\vartheta \left( A_0\right) =0$. This function is
obtained by integration of the \emph{Chern-Simons $1-$form}%
\begin{equation}        \label{ro ^ phi}
                        \index{Chern-Simons!1-form@$1-$form}%
        \rho \left( \beta \right) _{A}:=\int_{Y}%
        \tr\left( F_{A}\wedge \beta_A\right) \wedge \ast \phi .
\end{equation}

It is straightforward to check that the co-closedness condition   $\rd*\phi =0$ implies that the $1-$form (\ref{ro ^ phi}) is closed, so
the procedure doesn't depend on the path $A\left( t\right) $.
Since $\sA$ is contractible, by the Poincar\'{e} Lemma  $%
\rho $ is the derivative of some function $\vartheta $, and
by Stokes' theorem $\rho $ vanishes along $\mathcal{G-}$orbits  $\im \rd_{A} \simeq
T_{A}\left\{ \mathcal{G}.A\right\} $.
Thus $\rho $ descends to the quotient $\sB$ and so does $\vartheta $,  at least locally. 
Since  $\ast\phi$ is not, in general, an integral class, the set of periods of $\vartheta$ is actually  \emph{dense}; however, as
long as our interest remains in the study of the moduli space $\mathcal{M}=\Crit(\rho)$
of $\rG_2$-instantons, there is no worry, for the gradient $\rho =\rd\vartheta $ is unambiguously defined
on $\sB$.

\subsection{Analysis on manifolds with tubular ends}
\label{sec: mfds w tubular ends}

In order to get some more depth into  the instanton gluing process of Theorem \ref{thm:itcs}, we will need some general results from linear analysis on asymptotically cylindrical  manifolds (cf. Definition \ref{def: ACyl mfds}).

\begin{definition}
\label{def: mfd w tubular ends}
A \emph{manifold with tubular end}    $(M,X, \pi)$ is given by a smooth manifold  $M$ with a distinguished compact submanifold-with-boundary $M_0\subset M$,  a Riemannian manifold $X$, and a diffeomorphism 
$$
\pi\colon M_\infty:=M\setminus M_0\to\mathbb{R}_{+}\times X.
$$
 The complement $M_\infty:=M\setminus M_0$ is called the tubular end, $\pi$ is the \emph{tubular model} and $X$ is the \emph{asymptotic cross-section}.
\footnote{The reader interested in analysis on tubular manifolds will find a thorough and very useful toolbox in \cite{Pacini2012}.
}
\end{definition}

Of course one could in principle consider, analogously, manifolds with any number of tubular ends but, in the context of $\rG_2$-manifolds, the Ricci-flat geometry constrains that number to one:  
 \begin{theorem}[{\cite{Salur2006}*{Theorem 1}}]
 If a connected and orientable manifold   $M$  with $k$ tubular ends admits a Ricci-flat metric, then $k\leq2$. Moreover $k=2$ if, and only if, $M$ is a cylinder. 
 \end{theorem}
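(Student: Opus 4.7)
The plan is to reduce everything to the Cheeger--Gromoll splitting theorem, which is available since Ricci-flatness implies $\mathrm{Ric}\geq 0$ and a manifold with tubular ends is automatically geodesically complete. The key observation is that any pair of distinct tubular ends immediately provides a line in $M$, after which the splitting theorem pins down the global structure and the count of ends.

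First I would assume $k\geq 2$ and pick distinct tubular ends $E_1, E_2$, with their tubular models identifying each $E_i$ with $\mathbb{R}_+\times X_i$. Choose sequences $p_n\in E_1$ and $q_n\in E_2$ whose radial coordinates tend to $+\infty$. Since the ends are disjoint connected components of $M\setminus M_0$, every minimizing geodesic $\gamma_n$ joining $p_n$ to $q_n$ must cross the compact core $M_0$; pick $x_n\in\gamma_n\cap M_0$ and unit tangent $\dot\gamma_n(x_n)\in T_{x_n}M$. By compactness of the unit tangent bundle over $M_0$, a subsequence satisfies $x_n\to x_\infty$ and $\dot\gamma_n(x_n)\to v\in T_{x_\infty}M$. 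Completeness yields a bi-infinite unit-speed geodesic $\gamma\colon\mathbb{R}\to M$ with $\gamma(0)=x_\infty$, $\dot\gamma(0)=v$, and an Arzel\`a--Ascoli argument together with lower semicontinuity of length under uniform convergence shows that $\gamma$ is minimizing on every compact subinterval, i.e.\ a line.

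Next I would invoke the Cheeger--Gromoll splitting theorem to obtain an isometry $M\cong N\times\mathbb{R}$, where $N$ is a complete Ricci-flat manifold. It then remains to count ends of $N\times\mathbb{R}$. If $N$ is compact, the product has exactly two tubular ends, each globally modelled on $\mathbb{R}_+\times N$, so $k=2$ and $M$ is a cylinder, as claimed. If instead $N$ is non-compact, then $N\times\mathbb{R}$ has a single end: any compact set $K$ lies in $K_N\times[-T,T]$ for some compact $K_N\subset N$ and $T>0$, and any two points outside $K$ can be joined by first flowing to large $|t|$ and then using that $N\times\{t\}$ is non-compact and connected to move horizontally, so the complement of $K$ is connected. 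This contradicts $k\geq 2$, ruling out the non-compact case and in particular forbidding $k\geq 3$.

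The main obstacle is the line-construction step: one must guarantee that the minimizing geodesics $\gamma_n$ really do meet a fixed compact set, and that the extracted limit is a full line rather than a half-line or a broken trajectory. The first is a topological fact about the separation of $M$ into a compact core and its tubular ends (which one may enlarge slightly to $M_0$ if needed so that the $E_i$ are genuinely disjoint); the second is standard once completeness and the bound $d(p_n,q_n)\to\infty$ in both directions along $\gamma_n$ are established, both guaranteed because $p_n$ and $q_n$ escape to infinity in their respective tubular ends. Once the line exists, the two remaining steps---Cheeger--Gromoll and the end count for a product---are essentially mechanical.
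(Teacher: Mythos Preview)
The paper does not supply its own proof of this statement; it is quoted verbatim from \cite{Salur2006}*{Theorem~1}. Your argument via the Cheeger--Gromoll splitting theorem is correct and is exactly the route taken in Salur's original proof: construct a line from two distinct ends, split off an $\mathbb{R}$--factor, and then observe that the cross-section must be compact because a product of a non-compact connected manifold with $\mathbb{R}$ is one-ended.

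One small correction: your opening claim that ``a manifold with tubular ends is automatically geodesically complete'' does not follow from Definition~\ref{def: mfd w tubular ends}, which is purely differential-topological and places no constraint on the Riemannian metric. Completeness of the Ricci-flat metric is an implicit hypothesis (without it the statement is false), and it does hold in the asymptotically cylindrical setting of Definition~\ref{def: ACyl mfds}, which is the only case used later in the paper. You should state this as an assumption rather than a consequence.
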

\subsubsection{Geometric structures on manifolds with cylindrical end}

On a manifold with tubular end $(M,X, \pi)$, we have the following natural maps on differential forms (which clearly extend to any tensor fields):
\[
\xymatrix{ 
\Omega^{\bullet}(M)\ar[r]^{\res}\ar[rd]&\Omega^{\bullet}(M_\infty) \ar[d]^{\pi_*} \\
& \Omega^{\bullet}({\mathbb{R}_{+}\times X})
}
\]

By slight abuse of notation, given $\sigma_{\infty}\in\Omega^{\bullet}(X)$, we will also denote  by $\sigma_{\infty}$ its pullback to the product under  $\mathbb{R}_{+}\times X\xrightarrow[{}]{p_2} X$. Denoting by $t$ the coordinate function on $\R$, we adopt the following notation for asymptotic behaviour: 
\begin{itemize}
\item $\sigma\squigd \sigma_\infty$, if $\vert \nabla^{k}(\pi_{\ast}\sigma-\sigma_\infty)\vert\leq O(e^{-\delta t})$,  $t\in \mathbb{R}_{+}$, $\forall k\geq 0$, for a given $\delta>0$.
\item $\sigma\squig \sigma_\infty$, if $\exists\delta>0$ such that $\sigma\squigd\sigma_\infty$.
\end{itemize}
Whenever $\sigma\rightsquigarrow\sigma_\infty$,  $\sigma$ is said to be \emph{asymptotically translation-invariant} and
 $\sigma_\infty$ is its  \emph{asymptotic limit}.
 
\begin{definition}
\label{def: ACyl mfds}
A manifold with tubular end $(M,X,\pi)$ is said to be \emph{asymptotically cylindrical (\acyl)} if $M$ is also a Riemannian manifold and its metric $g_M$ is asymptotic to the natural cylindrical metric on the tubular model:  $g_M\squig g_X+\rd t^2$. In this case, we will call the  map $\pi\co M_\infty\to \R_+\times  X$ the \emph{cylindrical model}. 
\end{definition}

Let $E_\infty\to X$ be a Riemannian vector bundle.
By slight abuse of notation we also denote by $E_\infty$ its pullback to $\R_+\times X$.
For $k\in\N_0$, $\alpha\in(0,1)$ and $\delta\in\R$ we define
\begin{equation*}
  \|\cdot\|_{C^{k,\alpha}_\delta} := \|e^{-\delta t}\cdot\|_{C^{k,\alpha}},
\end{equation*}
denoting by $C^{k,\alpha}_\delta(X,E_\infty)$ the respective closure of $C^\infty_0(X,E_\infty)$.
We set $C^\infty_\delta:=\bigcap_{k} C^{k,\alpha}_\delta$.

Similarly, a Riemannian vector bundle $E\to M$ over an \acyl\ manifold $(M,X,\pi)$ is said to be \emph{asymptotic} to $E_\infty\to X$ if there is a bundle isomorphism $\bar\pi\co E|_{M_\infty} \to E_\infty$   covering $\pi$ such that the push-forward of the metric on $E$ is asymptotic to the metric on $E_\infty$ in the $C^\infty_\delta$ tubular norm above (for some $\delta>0$).
Denote by $t\co M\to[1,\infty)$ a smooth positive function which agrees with $t\circ\pi$ on $\pi^{-1}([1,\infty)\times  X)$, and define
\begin{equation*}
  \|\cdot\|_{C^{k,\alpha}_\delta} := \|e^{-\delta t}\cdot\|_{C^{k,\alpha}},
  \quad 
  \delta\in\R,
\end{equation*}
denoting by $C^{k,\alpha}_\delta(M,E)$ the respective closure of $C^\infty_0(M,E)$.

Finally, a connection $A\in \sA(E)$ is said to be \emph{asymptotic} to $A_\infty\in\sA(E_\infty)$ if $(A-\bar\pi^*A_\infty)\squig 0$ (the difference of two connections being a $1$-form). We also denote by $A_\infty$ its pullback to $E_\infty\to\R_+\times X$.

\subsubsection{Asymptotically translation-invariant operators on ACyl manifolds}

Let us briefly review some spectral theory for elliptic operators on sections of vector bundles over an \acyl\   manifold $M$ with asymptotic cross-section $X$. The primary references for the material in this section are Maz'ya--Plamenevski{\u\i}~\cite{Mazya1978} and Lockhart--McOwen~\cite{Lockhart1985}.

Let $F\to X$ be a Riemannian vector bundle, and let $D\co C^\infty(X,F)\to C^\infty(X,F)$ be a linear self-adjoint elliptic operator of first order.
The operator
\begin{equation*}
  L_\infty:=\del_t-D
\end{equation*}
extends to a bounded linear operator $L_{\infty,\delta}\co C^{k+1,\alpha}_\delta(X,F)\to C^{k,\alpha}_\delta(X,F)$.

\begin{theorem}[\cite{Mazya1978}*{Theorem 5.1}]
  \label{thm:mazya-plamenevskii}
   $L_{\infty,\delta}$ is invertible if and only if $\delta\notin \spec(D)$.
\end{theorem}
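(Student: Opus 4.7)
My plan is to conjugate away the exponential weight, diagonalise the problem using the spectrum of $D$ on the compact cross-section $X$, and reduce matters to a one-parameter family of scalar ODEs in $t$.

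\textbf{Step 1 (Conjugation by the weight).} First I note that multiplication by $e^{\delta t}$ is a tautological isometric isomorphism $C^{k,\alpha}\to C^{k,\alpha}_\delta$ (between the respective closures of $C^\infty_0$). A direct computation on the cylinder gives
\[
L_\infty(e^{\delta t}v) = e^{\delta t}\bigl(\partial_t - (D-\delta)\bigr)v,
\]
so the weighted claim is equivalent to showing that the \emph{unweighted} operator $L_\delta := \partial_t - (D-\delta)$ is invertible from $C^{k+1,\alpha}$ to $C^{k,\alpha}$ if and only if $\delta\notin\spec(D)$.

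\textbf{Step 2 (Spectral decomposition on $X$).} Because $D$ is self-adjoint and elliptic of first order on the closed manifold $X$, it admits a discrete real spectrum $\{\lambda_n\}$ with $|\lambda_n|\to\infty$ and a complete orthonormal basis of smooth eigensections $\{\phi_n\}\subset C^\infty(X,F)$. Expanding $u(t,x)=\sum_n u_n(t)\phi_n(x)$ and $f(t,x)=\sum_n f_n(t)\phi_n(x)$, the equation $L_\delta u = f$ decouples into
\[
\dot u_n - (\lambda_n-\delta)u_n = f_n, \qquad n\in\Z.
\]

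\textbf{Step 3 (Sufficiency).} If $\delta\notin\spec(D)$, each $\mu_n := \lambda_n-\delta$ is nonzero, and integrating towards the stable infinity produces a unique bounded solution
\[
u_n(t) = \begin{cases} -\displaystyle\int_t^{\infty} e^{\mu_n(t-s)} f_n(s)\,ds, & \mu_n>0, \\[1mm] \displaystyle\int_{-\infty}^{t} e^{\mu_n(t-s)} f_n(s)\,ds, & \mu_n<0. \end{cases}
\]
Reassembling the modes yields a formal inverse; I would then verify its boundedness $C^{k,\alpha}\to C^{k+1,\alpha}$ by combining the pointwise ODE estimates, uniform in $n$ by the spectral gap $|\mu_n|\geq\mathrm{dist}(\delta,\spec D)>0$, with the Schauder estimates for $D$ on the compact cross-section $X$; the derivative $\partial_t u$ is then read off directly from the equation $L_\delta u=f$.

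\textbf{Step 4 (Necessity).} Conversely, suppose $\delta = \lambda_0\in \spec(D)$ with eigensection $\phi_0$. I would pick $f_0\in C^\infty_0(\R)$ with $\int_\R f_0 \neq 0$ and set $f(t,x) := f_0(t)\phi_0(x)$, which is compactly supported on $\R\times X$. In the $n=0$ mode, $L_\delta u = f$ reduces to $\dot u_0 = f_0$, and every antiderivative of $f_0$ satisfies $u_0(+\infty)-u_0(-\infty)=\int_\R f_0\neq 0$; hence none of them decays at both ends, so none lies in the closure of $C^\infty_0$. This shows $f$ is not in the range of $L_\delta$, which precludes invertibility.

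\textbf{Main obstacle.} The substantive work sits in Step~3: one must check that the mode-by-mode Green's-function construction really assembles into a bounded operator between the Hölder spaces, and in particular control the high-frequency tail $|\lambda_n|\gg|\delta|$ uniformly in $n$. This is a weighted elliptic regularity argument on the cylinder, in which the spectral gap $|\mu_n|\geq\mathrm{dist}(\delta,\spec D)$ delivers the required integrability in $t$ while the Schauder theory for $D$ on the compact $X$ handles the regularity transverse to the cylinder direction.
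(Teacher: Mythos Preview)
The paper does not supply its own proof of this statement: it is quoted verbatim as \cite{Mazya1978}*{Theorem~5.1} and treated as a black box, so there is nothing to compare your argument against in the paper itself.

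On its own merits your outline is the standard one and is correct in structure. One point worth flagging: the mode-by-mode construction in Step~3 is clean in weighted $L^2$ or Sobolev spaces, where Parseval turns the sum over eigenmodes into an honest norm and the spectral gap gives the bound immediately. In H\"older spaces the eigenfunction expansion does not converge in the topology you need, so the ``reassembling'' step is not literally a sum in $C^{k,\alpha}$. The usual fix is either to prove the Sobolev version first and then upgrade to H\"older by local elliptic regularity for $\partial_t - D$ on finite cylinders (the constants being translation-invariant, hence uniform in $t$), or to write down the Green's operator as convolution in $t$ with the semigroup $e^{\pm tD}$ restricted to the positive/negative spectral subspaces and estimate that directly via Schauder theory. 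You gesture at this in your ``main obstacle'' paragraph, and that is indeed where the work lies; just be aware that the eigenfunction sum is a heuristic for identifying the inverse rather than the actual analytic argument in the H\"older setting.
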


Indeed, elements $a\in\ker L_\infty$ can be expanded in terms of the  $\delta$--eigensections of $D$, see~\cite{Donaldson2002}*{\S~3.1}:
\begin{equation}
  \label{eq:expansion}
  a=\sum_{\delta\in\spec D} e^{\delta t} a_\delta.
\end{equation}

Now let $E\to M$ be a (Riemannian) vector bundle asymptotic to $F$ and consider an elliptic operator 
$$
L\co C_0^\infty(M,E)\to C_0^\infty(M,E)
$$   
asymptotic to $L_\infty$, that is, such that the coefficients of  $L$ are asymptotic to the coefficients of $L_\infty$.
The operator $L$ extends to a bounded linear operator $L_\delta\co C^{k+1,\alpha}_{\delta}(M,E)\to C^{k,\alpha}_{\delta}(M,E)$.

\begin{prop}[\cite{Haskins2015}*{Proposition 2.4}]
  \label{prop:non-critical-fredholm}
  If $\delta\notin\spec(D)$, then $L_\delta$ is Fredholm.
\end{prop}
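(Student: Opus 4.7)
The plan is to establish Fredholmness by constructing a two-sided parametrix for $L_\delta$ whose error terms are compact operators between the weighted Hölder spaces, combining an interior parametrix on the compact core with the exact inverse $L_{\infty,\delta}^{-1}$ provided by Theorem~\ref{thm:mazya-plamenevskii} on the tubular end.

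First I fix a partition $1=\chi_0+\chi_\infty$ on $M$ with $\chi_0$ compactly supported and $\chi_\infty\equiv 1$ on $\pi^{-1}([2,\infty)\times X)$, together with fatter companions $\tilde\chi_i\equiv 1$ on $\supp\chi_i$. On the compact set $M_1:=\supp\tilde\chi_0$, interior ellipticity of $L$ yields a parametrix $P_0$ modulo smoothing (e.g.\ by doubling $M_1$ into a closed manifold). I then assemble the global parametrix
\[
Ps \;:=\; \tilde\chi_0\, P_0(\chi_0 s) \;+\; \tilde\chi_\infty\, \bar\pi^{*}L_{\infty,\delta}^{-1}(\chi_\infty s),
\]
and a direct computation, using $\tilde\chi_i\chi_i=\chi_i$ and $L=L_\infty+(L-L_\infty)$ on the end, gives
\[
L_\delta P - I \;=\; [L,\tilde\chi_0]\,P_0\,\chi_0 \;+\; [L,\tilde\chi_\infty]\,L_{\infty,\delta}^{-1}\,\chi_\infty \;+\; \tilde\chi_\infty\,(L-L_\infty)\,L_{\infty,\delta}^{-1}\,\chi_\infty \;+\; K_0,
\]
with a symmetric expression for $PL_\delta-I$; here $K_0$ is the smoothing error from $P_0$, and the commutator pieces are zeroth-order operators compactly supported in the collars $\supp(\rd\tilde\chi_i)$.

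Next I verify that each error summand is a compact map $C^{k+1,\alpha}_\delta(M,E)\to C^{k,\alpha}_\delta(M,E)$. The smoothing term $K_0$ and the commutator pieces factor through compact sets and are compact by the Arzelà--Ascoli-type embedding $C^{k+1,\alpha}(K)\hookrightarrow C^{k,\alpha}(K)$, with the weight irrelevant on compacts. For the remaining piece $\tilde\chi_\infty(L-L_\infty)L_{\infty,\delta}^{-1}\chi_\infty$, I truncate at height $t=T$: the portion supported on $\{t\le T\}$ is again compact by the same embedding, while the tail has operator norm controlled by $\sup_{t>T}e^{-\delta_0 t}\cdot\|L_{\infty,\delta}^{-1}\|$, since the coefficients of $L-L_\infty$ decay at some rate $\delta_0>0$ encoded in the hypothesis $L\squig L_\infty$. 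Sending $T\to\infty$ presents the full error as a norm limit of compact operators, hence compact, so $P$ is a Fredholm parametrix and $L_\delta$ is Fredholm.

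The hard part will be that last compactness argument: one must carefully reconcile the decay rate $\delta_0$ of $L-L_\infty$ with the prescribed weight $\delta$, ensuring that the composition $(L-L_\infty)L_{\infty,\delta}^{-1}$ shrinks in operator norm on truncated tails despite the $e^{-\delta t}$ weight built into both its domain and target. This is exactly what the tubular asymptotic convention $\squig$ guarantees, and it is also where the non-criticality hypothesis $\delta\notin\spec(D)$ enters, through the boundedness of $L_{\infty,\delta}^{-1}$ supplied by Theorem~\ref{thm:mazya-plamenevskii}.
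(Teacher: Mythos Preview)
The paper does not supply its own proof of this proposition; it is simply quoted from \cite{Haskins2015}*{Proposition~2.4} (which in turn follows the classical framework of Lockhart--McOwen and Maz'ya--Plamenevski{\u\i}). So there is nothing in the present paper to compare against.

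Your parametrix argument is the standard route to this result and is correct. Patching an interior elliptic parametrix with the exact cylindrical inverse $L_{\infty,\delta}^{-1}$ supplied by Theorem~\ref{thm:mazya-plamenevskii}, and then verifying that each error summand is compact on $C^{k,\alpha}_\delta$, is exactly how this is done in the references the paper cites. The only minor comment is that your closing paragraph slightly overstates the delicacy of ``reconciling'' the decay rate $\delta_0$ of $L-L_\infty$ with the weight $\delta$: multiplication by a coefficient with $C^{k,\alpha}$ decay like $e^{-\delta_0 t}$ has operator norm $O(e^{-\delta_0 T})$ on the tail $\{t>T\}$ as a map $C^{k+1,\alpha}_\delta\to C^{k,\alpha}_\delta$ for \emph{any} fixed $\delta$, since $e^{-\delta t}(au)=a\cdot(e^{-\delta t}u)$ and the weight passes straight through. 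No genuine interplay between $\delta$ and $\delta_0$ is required beyond $\delta_0>0$; the hypothesis $\delta\notin\spec(D)$ enters solely through the existence and boundedness of $L_{\infty,\delta}^{-1}$, as you correctly identify.
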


Elements in the kernel of $L$ still have an asymptotic expansion analogous to \eqref{eq:expansion}.
We need the following result which extracts the constant term of this expansion.

\begin{prop}[{\cite[Proposition 3.5]{SaEarp2015b}}]
  \label{prop:abstract-iota}
  There is a constant $\delta_0>0$ such that, for all $\delta\in[0,\delta_0]$, one has $\ker L_\delta = \ker L_0$ and there is a linear map $\iota\co \ker L_0 \to \ker D$ such that
  \begin{equation*}
    a\overset{\delta_0}{\squig}\iota(a).
  \end{equation*}
  In particular,
  \begin{equation*}
    \ker\iota = \ker L_{-\delta_0}.
  \end{equation*}
\end{prop}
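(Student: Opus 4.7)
The plan is to use the asymptotic expansion \eqref{eq:expansion} for kernels of $L$ (not only of $L_\infty$): on the tubular end, any element of $\ker L_{\delta''}$ equals a finite sum of $\lambda$-eigenterms of $D$ weighted by $e^{\lambda t}$ plus a remainder in $C^{k,\alpha}_{\delta'}$, for any pair of non-critical weights $\delta'<\delta''$. Since the coefficients of $L-L_\infty$ decay exponentially, this is a standard consequence of Theorem~\ref{thm:mazya-plamenevskii} combined with a bootstrap: one writes $L_\infty a=(L_\infty-L)a$, whose right-hand side lies in a strictly smaller weighted space; inverting $L_\infty$ past the next non-critical weight peels off one spectral term; iteration converges, see \cite{Lockhart1985,Mazya1978} and \cite{Donaldson2002}*{\S3.1}.

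Granted this expansion, the proof is bookkeeping. By discreteness of $\spec(D)$, choose $\delta_0>0$ with $\spec(D)\cap[-\delta_0,\delta_0]\subseteq\{0\}$, so in particular $\pm\delta_0\notin\spec(D)$. For any $\delta\in[0,\delta_0]$ and $a\in\ker L_\delta\subseteq\ker L_{\delta_0}$, the expansion between the non-critical weights $-\delta_0$ and $\delta_0$ reads
\[
a=\sum_{\lambda\in\spec(D)\cap(-\delta_0,\delta_0]}e^{\lambda t}a_\lambda+r,\qquad a_\lambda\in\ker(D-\lambda),\ r\in C^{k,\alpha}_{-\delta_0}(M,E),
\]
and the indexing set is contained in $\{0\}$ by the spectral gap. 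Hence $a=a_0+r$ with $a_0\in\ker D$ and $r\overset{\delta_0}{\squig}0$; both summands are bounded, so $a\in\ker L_0$, while the reverse inclusion $\ker L_0\subseteq\ker L_\delta$ follows from $C^{k,\alpha}_0\subseteq C^{k,\alpha}_\delta$, establishing $\ker L_\delta=\ker L_0$. Now define $\iota\co\ker L_0\to\ker D$ by $\iota(a):=a_0$; linearity follows from uniqueness of the expansion, and $a-\iota(a)=r$ gives precisely $a\overset{\delta_0}{\squig}\iota(a)$. For the final assertion, $\iota(a)=0$ forces $a=r\in C^{k,\alpha}_{-\delta_0}$, i.e.\ $a\in\ker L_{-\delta_0}$; conversely any $a\in\ker L_{-\delta_0}$ decays at rate $\delta_0$, so the constant summand $a_0$ in its expansion must vanish by uniqueness, whence $\iota(a)=0$.

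The main obstacle is the asymptotic expansion itself for $L$ rather than merely $L_\infty$. The inductive bootstrap indicated in the opening paragraph is the standard route; one must verify that each application of Theorem~\ref{thm:mazya-plamenevskii} at a non-critical weight reduces the weight by a definite amount (controlled by the gaps of $\spec(D)$), so that after finitely many iterations the remainder sits in the target weighted space $C^{k,\alpha}_{-\delta_0}$, with the exponential decay of $L-L_\infty$ controlling the successive error terms and ensuring convergence.
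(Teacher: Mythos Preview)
The paper itself does not prove this proposition; it is quoted verbatim from \cite{SaEarp2015b}*{Proposition 3.5} without argument. Your proof is correct and is exactly the standard route taken in that reference and in the underlying sources \cite{Lockhart1985,Mazya1978,Donaldson2002}: pick $\delta_0>0$ inside the spectral gap of $D$ about $0$, invoke the asymptotic expansion of elements of $\ker L$ between the non-critical weights $\pm\delta_0$ (justified by the bootstrap you outline, using the exponential decay of the coefficients of $L-L_\infty$ together with Theorem~\ref{thm:mazya-plamenevskii}), and define $\iota(a)$ as the constant term $a_0\in\ker D$. The identifications $\ker L_\delta=\ker L_0$ and $\ker\iota=\ker L_{-\delta_0}$ then drop out immediately, as you show.

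Two cosmetic remarks. First, your indexing interval $(-\delta_0,\delta_0]$ is harmless but slightly non-standard; since $\delta_0\notin\spec(D)$ by construction it coincides with $(-\delta_0,\delta_0)$. Second, in the final paragraph you might note explicitly that the exponential decay rate of $L-L_\infty$ need only be some fixed $\epsilon>0$: each bootstrap step improves the weight by $\min(\epsilon,\text{gap})$, so finitely many iterations suffice to pass from $\delta_0$ to $-\delta_0$ regardless of how small $\epsilon$ is relative to $\delta_0$.
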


\subsection{Twisted connected sums}
\label{sec:tcs}

An important method to produce examples of compact  $7$--manifolds with holonomy
exactly $\rm G_2$ is the \emph{twisted connected sum} (TCS) construction 
\cite{Kovalev2003,Corti2013,Corti2015}.
It consists of gluing a pair of asymptotically cylindrical (\acyl) Calabi--Yau $3$--folds obtained from certain  smooth projective $3$--folds called  \emph{building blocks} (see Definition \ref{def:matching-data}). 
Combining results of Kovalev and Haskins--Hein--Nordström, each matching pair of building blocks yields a one-parameter family of closed $\rG_2$--manifolds.

 A building block $(Z,\kd)$
 is given by a projective morphism
$\zeta: Z\to \P^1$ such that $\kd:=\zeta^{-1}(\infty)$ is a smooth anticanonical  $K3$
surface, under some mild topological assumptions (see Definition \ref{def:building-block}); in particular, $\kd$ has
trivial normal bundle. Choosing a convenient K\"ahler structure on
$Z$, one can make $V:=Z\setminus \kd$ into an \acyl\ Calabi--Yau $3$--fold (cf. Definition \ref{def: ACylCY3}), that is, a non-compact Calabi--Yau manifold with a tubular end modelled on $\mathbb{R}_+\times \mathbb{S}^1\times \kd$ \cite[Theorem 3.4]{Corti2015}. Then $\mathbb{S}^1\times V$ is an \acyl\ $\rm G_2$--manifold (cf. Definition \ref{def: ACyl G2}) with a tubular end modelled
on $\mathbb{R}_+\times \mathbb{T}^2\times \kd$. 

\begin{definition}[cf. {\cite[Definition 3.9]{Corti2015}}]      
  \label{def:matching-data}
  \label{def:match}
  Let $Z_\pm$ be complex $3$-folds, $\kd_\pm \subset Z_\pm$ smooth anticanonical
$K3$ divisors and $\kclass_\pm \in H^2(Z_\pm)$ K\"ahler classes.
We call a \emph{matching} of $(Z_+, \kd_+, \kclass_+)$
and $(Z_-, \kd_-, \kclass_-)$ a diffeomorphism $\hkr \colon {\kd_+ \to \kd_-}$ such that
$\hkr^* \kclass_- \in H^2(\kd_+)$ and $(\hkr^{-1})^* \kclass_+ \in H^2(\kd_-)$
have type $(2,0) + (0,2)$.

  Given a pair of building blocks $(Z_\pm,\Sigma_\pm)$, a set of  \emph{matching
data} is a collection 
  $
  \bm=\{\(\omega_{I,\pm},\omega_{J,\pm},\omega_{K,\pm}\),\fr\}
  $ consisting of a choice of \hk structures on $\Sigma_\pm$ such
that $[\omega_{I,\pm}]=\kclass_\pm|_{\kd_\pm}$ is the restriction of a Kähler class on $Z_\pm$ and a matching
 $\fr\co\Sigma_+\to\Sigma_-$ such that
  \begin{equation*}
    \fr^*\omega_{I,-}=\omega_{J,+}, \quad
    \fr^*\omega_{J,-}=\omega_{I,+} \qandq
    \fr^*\omega_{K,-}=-\omega_{K,+}.
  \end{equation*} In this case $(Z_\pm,\Sigma_\pm)$ are said to \emph{match} via $\bm$ and $\hkr$ is called  a \emph{\hk rotation} (see Remark \ref{rem: hK ok} below).
\end{definition}

Identifying a matching pair $(Z_\pm,\kd_\pm)$ of building blocks by the \hk rotation between the $K3$ surfaces `at infinity', the corresponding pair $\mathbb{S}^1 \times V_\pm$ of  \acyl\ $\rm G_2$--manifolds is truncated at a
large `neck length' $T$ and, intertwining the circle components in the tori $\mathbb{T}^2_\pm$ along the tubular end, glued to form a compact $7$-manifold
$$
Y =Z_+\#_\hkr Z_-:=\left(  \mathbb{S}^1 \times V_+\right) \cup_{\hkr}  \left(\mathbb{S}^1 \times V_-\right).
$$
For large enough $T_0$, this twisted connected sum $Y$ carries a family of  $\rm G_2$-structures $\{\phi_T\}_{T\geq T_0}$ with $\Hol(\phi_T)=\rm G_2$
\cite[Theorem 3.12]{Corti2015}.
The construction is summarised in the following statement.

\begin{theorem}[{\cite[Corollary 6.4]{Corti2015}}]
\label{thm:tcs}
Given a matching pair
of building blocks $(Z_\pm, \kd_\pm)$ with K\"ahler classes
$\kclass_\pm \in H^{1,1}(Z_\pm)$ such that
$(\kclass_+|_{\kd_+})^2 = (\kclass_-|_{\kd_-})^2$, there exists
a family of torsion-free $G_{2}$-structures
$\left\{\phi_{T}:T\gg1\right\}$ on the closed
$7$-manifold $Y = Z_+\#_\hkr Z_-$.
\end{theorem}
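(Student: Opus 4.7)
The plan is to follow the standard gluing scheme of Kovalev, as refined by Corti--Haskins--Pacini--Nordström: first produce on each side an \acyl Calabi--Yau structure compatible with the matching data, then form the product $\rG_2$--manifold, glue across a long neck to get an \emph{approximate} torsion-free $\rG_2$--structure with exponentially small torsion, and finally perturb to an exact solution via an implicit function theorem. Holonomy exactly $\rG_2$ is extracted at the end from simple connectedness of $Y$.

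First, I would invoke Haskins--Hein--Nordström (cited in the excerpt) to equip each $V_\pm=Z_\pm\setminus\Sigma_\pm$ with an \acyl Calabi--Yau structure $(\omega_\pm,\Omega_\pm)$ whose asymptotic model on the cylinder $\R_+\times \mathbb{S}^1\times\Sigma_\pm$ is determined by the chosen Kähler class $\kclass_\pm$ and the hyperkähler triple $(\omega_{I,\pm},\omega_{J,\pm},\omega_{K,\pm})$ on $\Sigma_\pm$. The cross-sectional condition $(\kclass_+|_{\kd_+})^2=(\kclass_-|_{\kd_-})^2$ guarantees that the asymptotic K3 volumes agree, which is what allows the two models to be matched isometrically. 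Taking the product with $\mathbb{S}^1$ yields \acyl $\rG_2$--manifolds $Y_\pm=\mathbb{S}^1\times V_\pm$ with asymptotic $\rG_2$--structure on $\R_+\times\mathbb{T}^2\times\Sigma_\pm$ of the form
\begin{equation*}
\phi_{\infty,\pm}=\rd t\wedge\rd\theta_\pm\wedge \omega_{I,\pm}+\rd t\wedge\Re\Omega_\pm+\rd\theta_\pm\wedge\Im\Omega_\pm,
\end{equation*}
where $t$ is the cylinder coordinate on the $V_\pm$ end and $\theta_\pm$ is the external circle factor.

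Next, the hyperkähler rotation $\fr$ and the interchange of the $\mathbb{S}^1$--factors in $\mathbb{T}^2_\pm$ combine into an isometry of the two asymptotic cross-sections which identifies $\phi_{\infty,+}$ with $\phi_{\infty,-}$; this is exactly what the swap $\fr^*\omega_{I,-}=\omega_{J,+}$, $\fr^*\omega_{J,-}=\omega_{I,+}$, $\fr^*\omega_{K,-}=-\omega_{K,+}$ is designed to accomplish. Truncating each $Y_\pm$ at neck length $T$ and interpolating $\phi_+$ and $\phi_-$ via a cut-off function across the middle slab $[-1,1]\times\mathbb{T}^2\times\Sigma_+$ produces a closed, non-degenerate $3$--form $\tilde\phi_T$ on the compact manifold $Y=Z_+\#_\hkr Z_-$, hence a $\rG_2$--structure. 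Asymptotic translation invariance of the cylindrical data (with exponential decay as in \S\ref{sec: mfds w tubular ends}) forces the torsion $\rd *_{\tilde\phi_T}\tilde\phi_T$ to be supported on the neck and to satisfy $\|\rd *_{\tilde\phi_T}\tilde\phi_T\|_{C^{k,\alpha}}\leq C e^{-\delta T}$ for some $\delta>0$.

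Finally, one perturbs $\tilde\phi_T$ by $\rd\eta$ for $\eta\in\Omega^2(Y)$ small and solves the torsion-free equation $\rd *_{\tilde\phi_T+\rd\eta}(\tilde\phi_T+\rd\eta)=0$ as a quasi-linear elliptic PDE. Writing the equation schematically as $L_T\eta=\rd *_{\tilde\phi_T}\tilde\phi_T+Q(\eta)$ with $L_T$ the linearization (essentially a Hodge Laplacian twisted by the projection to $\Omega^2_7$) and $Q$ quadratic, the main analytic step is to show that $L_T$ is uniformly invertible on a suitable complement to its approximate kernel, with operator norm of $L_T^{-1}$ growing at most polynomially in $T$. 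This is where Proposition \ref{prop:non-critical-fredholm} and the Maz'ya--Plamenevski\u\i--Lockhart--McOwen theory enter: one chooses a positive weight $\delta$ below the first eigenvalue of the asymptotic Dirac-type operator, builds approximate inverses from the two \acyl pieces, and patches them using excision; this is precisely the hard part, as the small eigenvalues of $L_T$ associated with bounded harmonic forms on the pieces need to be handled by a finite-dimensional obstruction analysis. Once the uniform estimate is established, the exponential smallness of the torsion defeats the quadratic nonlinearity in a Banach fixed-point argument, producing a genuine torsion-free $\rG_2$--structure $\phi_T$ for all $T\gg 1$. Holonomy exactly $\rG_2$ rather than a proper subgroup follows since $Y$ is simply connected (the Kovalev construction gluing $\mathbb{S}^1\times V_+$ and $\mathbb{S}^1\times V_-$ kills the circle factors in $\pi_1$) and therefore cannot support any parallel $1$--form or reduction compatible with a smaller holonomy group.
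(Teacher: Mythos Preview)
Your outline matches the paper's treatment: in \S\ref{sec:tcs} the statement is reduced to two cited black boxes, Theorem~\ref{thm:hhn} (Haskins--Hein--Nordström) for the \acylcy\ structures on $V_\pm$ and Theorem~\ref{thm:kovalev} (Kovalev) for the perturbation $\tilde\phi_T\mapsto\phi_T$, and the paper proves neither in detail; your sketch of the implicit-function/fixed-point argument for the latter is extra detail beyond what the paper itself supplies. One small slip worth fixing: your displayed asymptotic model $\phi_{\infty,\pm}$ is not a $3$--form as written, since $\rd t\wedge\rd\theta_\pm\wedge\omega_{I,\pm}$ has degree $4$, and only one of the two circle coordinates in $\mathbb{T}^2$ appears; compare the paper's product form $\phi=\rd\beta\wedge\omega+\Re\Omega$ in \S\ref{sec:gluing-acyl-g2-manifolds}, which on the end works out to $\rd t\wedge\rd\alpha\wedge\rd\beta+\rd\beta\wedge\omega_I+\rd\alpha\wedge\omega_J+\rd t\wedge\omega_K$ (up to signs). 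This is cosmetic and does not affect the logic of your argument.
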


\begin{figure}[h ]
  \centering
  \begin{tikzpicture}
    \coordinate (l) at (0,0);
    \coordinate (r) at (4,0);
    \draw (l) to [out=180,in=0] +(-2,0) to [out=180,in=0] +(-1.2,.4) to [out=180, in=90] +(-.8,-.8) to [out=-90,in=180] +(.8,.-.8) to [out=0,in=180] +(+1.2,+.4) to [out=0,in=180] +(2,0);
    \draw [densely dotted,color=gray] (l) to +(.4,0);
    \draw [densely dotted,color=gray] (l)+(0,-.8) to +(.4,-.8);
    \draw [densely dotted,color=gray] (l)+(.4,0) arc (90:-90:0.1 and 0.4);
    \draw [dotted,color=gray] (l)+(.4,0) arc (90:270:0.1 and 0.4);
    \draw (l) arc (90:-90:0.1 and 0.4);
    \draw (l) [dashed,color=gray] arc (90:270:0.1 and 0.4);
    \draw (l)+(-.3,0) arc (90:-90:0.1 and 0.4);
    \draw (l)+(-.3,0) [dashed,color=gray] arc (90:270:0.1 and 0.4);
    \draw (l)+(-2.2,-1.2) node [right] {$\times$};
    \draw (l)+(-2.2,-1.6) node [right] {$ \mathbb{S}^1$};
    \draw (l)+(-3.1,-.4) node {$V_+$};
    \draw [decorate,decoration={brace,amplitude=3pt}] ($(l)+(-4,.6)$) to node [above=1mm] {$Y_{T,+}$} ($(l)+(0,.6)$);
    \draw (l)+(-.15,0) node [above] {\tiny $[T\!,\!T\!\!+\!\!1]$};

    \draw (r) to [out=0,in=180] +(2,0) to [out=0,in=180] +(1.2,.4) to [out=0,in=90] +(.8,-.8) to [out=-90,in=0] +(-.8,.-.8) to [out=180,in=0] +(-1.2,+.4) to [out=180,in=0] +(-2,0);
    \draw [densely dotted,color=gray] (r) to +(-.4,0);
    \draw [densely dotted,color=gray] (r)+(0,-.8) to +(-.4,-.8);
    \draw [densely dotted,color=gray] (r)+(-.4,0) arc (90:-270:0.1 and 0.4);
    \draw (r) arc (90:-270:0.1 and 0.4);
    \draw (r)+(.3,0) arc (90:-90:0.1 and 0.4);
    \draw (r)+(.3,0) [dashed,color=gray] arc (90:270:0.1 and 0.4);
    \draw (r)+(1.7,-1.2) node [right] {$\times$};
    \draw (r)+(1.7,-1.6) node [right] {$ \mathbb{S}^1$};
    \draw (r)+(+3.1,-.4) node {$V_-$};
    \draw [decorate,decoration={brace,amplitude=3pt}] ($(r)+(0,.6)$) to node [above=1mm] {$Y_{T,-}$} ($(r)+(4,.6)$);
    
    \draw (l)+(.6,0) node [right] {$\Sigma_+$};
    \draw (l)+(.6,-.4) node [right] {$\times$};
    \draw (l)+(.6,-.8) node [right] {$ \mathbb{S}^1$};
    \draw (r)+(-1.3,0) node [right] {$\Sigma_-$};
    \draw (r)+(-1.3,-.4) node [right] {$\times$};
    \draw (r)+(-1.3,-.8) node [right] {$ \mathbb{S}^1$};
    
    \draw [-stealth] ($(l)+(1.3,0)$) to node [above] {\scriptsize $\fr$} ($(r)+(-1.3,0)$);
    \draw [-stealth] ($(l)+(-1.5,-1.6)$) to ($(l)+(1.3,-1.6)$) to [out=0,in=180] ($(r)+(-1.3,-.8)$);
    \draw [color=white,line width=2pt] ($(l)+(1.3,-.8)$) to [out=0,in=180] ($(r)+(-1.3,-1.6)$) to ($(r)+(1.7,-1.6)$) ;
    \draw [-stealth] ($(l)+(1.3,-.8)$) to [out=0,in=180] ($(r)+(-1.3,-1.6)$) to ($(r)+(1.7,-1.6)$);
  \end{tikzpicture}

  \caption{The twisted connected sum of a matching pair of building blocks.}
\end{figure}
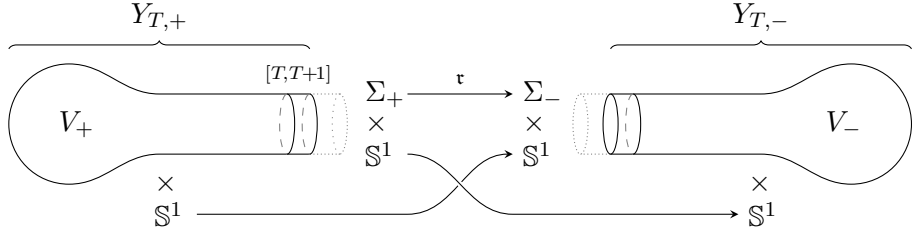

\subsubsection{ACyl Calabi--Yau \texorpdfstring{$3$}{3}--folds from building blocks}

The twisted connected sum in Theorem \ref{thm:tcs} is based on gluing \acyl\  $\rG_2$--manifolds, which arise as the product of an \acyl\ Calabi-Yau $3$-fold with $ \mathbb{S}^1$.
Let us review how to produce these from  building blocks.
\begin{definition}
\label{def: ACylCY3}
  Let   $(V,\omega,\Omega)$ be a Calabi--Yau $3$--fold with tubular end and asymptotic cross-section $\Sigma\times  \mathbb{S}^1$ given by a \hk surface  $(\Sigma,\omega_I,\omega_J,\omega_K)$.
Then $V$    is called an \emph{asymptotically cylindrical Calabi-Yau $3$-fold (\acylcy) } if 
  \begin{align*}
    \omega&\rightsquigarrow\rd t\wedge\rd \alpha + \omega_I , \\
    \Omega&\rightsquigarrow(\rd \alpha-i\rd t)\wedge(\omega_J+i\omega_K),
  \end{align*}
where $t$ and $\alpha$ denote the respective coordinates on $\R_+$ and $ \mathbb{S}^1$.
\end{definition}

Numerous examples of \acylcy\   can be obtained from the following ingredients:

\begin{definition}[Corti--Haskins--Nordström--Pacini \cite{Corti2013}*{Definition~5.1}]
  \label{def:building-block}
  A \emph{building block} is a smooth projective $3$--fold $Z$ together with a projective morphism $\zeta\co Z\to \P^1$ such that the following hold:
  \begin{itemize}
  \item 
  The anticanonical class $-K_Z\in H^2(Z)$ is primitive.
  
  \item 
  $\Sigma:=\zeta^{-1}(\infty)$ is a smooth $K3$ surface and $\Sigma \sim
    -K_Z$.
  
  \item 
  Identifying $H^{2}(\kd,\Z)$ with the $K3$ lattice (\ie choosing a marking for $\kd$), the embedding  
$$
N:=\im(H^{2}(Z,\Z)\rightarrow H^{2}(\kd,\Z))\into H^2(\Sigma)
$$ 
is primitive.
  
  \item 
  The groups $H^{3}(Z,\Z)$ and $H^{4}(Z,\Z)$ are torsion-free.
  \end{itemize}
\end{definition}

In particular, building blocks are simply-connected \cite[\S 5.1]{Corti2013}. 

\begin{remark}
  The existence of the fibration $\zeta\co Z\to \P^1$ is equivalent to $\Sigma$ having trivial normal bundle.
  This is crucial because it means that $Z\setminus \Sigma$ has a cylindrical end, given by an exponential radial coordinate in a tubular neighbourhood of $\Sigma$.
  The last two conditions in the definition of a building block are not essential; they are meant to facilitate the computation of certain topological invariants.
\end{remark}

\begin{remark}
\label{rem: hK ok}
Given a matching $\hkr$ between a pair of
building blocks $(Z_\pm, \kd_\pm, \kclass_\pm)$, 
one can make the choices in the definition of the \acyl\ Calabi-Yau structure
so that $\hkr$ becomes a \hk rotation (cf. Definition \ref{def:matching-data}) of the induced \hk structures
\cite[Theorem~3.4 \& Proposition 6.2]{Corti2015}.
\end{remark}
In his original work, Kovalev~\cite{Kovalev2003} used building blocks arising from Fano $3$--folds by blowing-up the base-locus of a generic anti-canonical pencil.
This method was extended to the much larger class of semi Fano $3$--folds (a class of weak Fano $3$--folds) by Corti--Haskins--Nordström--Pacini~(see Proposition \ref{FanoBlock} below).
Kovalev--Lee~\cite{Kovalev2011} construct building blocks starting from $K3$ surfaces with non-symplectic involutions, by taking the product with $\P^1$, dividing by $\Z_2$ and blowing up the resulting singularities.
In every instance, one obtains an \acylcy\ by the following theorem:
\begin{theorem}[\cite{Haskins2015}*{Theorem D}]
  \label{thm:hhn}
  Let $(Z,\Sigma)$ be a building block and let $(\omega_I,\omega_J,\omega_K)$ be a \hk structure on $\Sigma$.
  If $[\omega_I]\in H^{1,1}(\Sigma)$ is the restriction of a Kähler class on $Z$, then there is an asymptotically cylindrical Calabi--Yau structure $(\omega,\Omega)$ on $V:=Z\setminus \Sigma$ with asymptotic cross section $(\Sigma,\omega_I,\omega_J,\omega_K)$.
\end{theorem}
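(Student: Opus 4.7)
The plan is to build an approximate \acyl\ Calabi--Yau structure on $V := Z\setminus\Sigma$ with the correct cylindrical asymptotics, and then correct it to an honest \acylcy\ structure by solving a complex Monge--Ampère equation with exponential decay on the end.

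\emph{Approximate holomorphic volume form.} Since $\Sigma\sim -K_Z$ is a smooth divisor, a defining section $s\in H^0(Z,\cO_Z(\Sigma))$ yields a meromorphic section $\Omega_0:=1/s$ of $K_Z$ with a simple pole along $\Sigma$, hence a nowhere-vanishing holomorphic $(3,0)$-form on $V$. In a tubular neighbourhood $U\cong\Sigma\times\Delta$ of $\Sigma$ write $z=e^{-t-i\alpha}$, so that $(t,\alpha)\in\R_+\times \mathbb{S}^1$ furnishes cylindrical coordinates on $U\setminus\Sigma$. The Poincaré residue formula gives $\Omega_0\squig (d\alpha-i\,dt)\wedge\res_\Sigma\Omega_0$, and after rescaling $s$ one arranges $\res_\Sigma\Omega_0=\omega_J+i\omega_K$, the holomorphic symplectic form of the given hyperkähler structure in complex structure $I$.

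\emph{Approximate Kähler form.} The hypothesis furnishes a Kähler class $\kclass\in H^{1,1}(Z)$ with $\kclass|_\Sigma=[\omega_I]$. Pick any Kähler representative $\omega_Z\in\kclass$; the $\partial\bar\partial$-lemma on $\Sigma$ gives $\omega_Z|_\Sigma-\omega_I=i\partial\bar\partial\rho$ for some smooth $\rho$ on $\Sigma$. Extending $\rho$ to $U$ and interpolating with cutoffs in $t$, correct $\omega_Z$ to a Kähler form $\omega_0$ on $V$ with $\omega_0\squig \omega_{\rm cyl}:=dt\wedge d\alpha+\omega_I$ (positivity being preserved by pushing the interpolation region far out). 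Define the defect $f$ by
\begin{equation*}
  (i/2)^{3}\,\Omega_0\wedge\overline{\Omega_0}=e^{f}\,\omega_0^{3}/3!.
\end{equation*}
The hyperkähler relations $\omega_I^2=\omega_J^2=\omega_K^2$ and $\omega_J\wedge\omega_K=0$ imply that the cylindrical models for the two sides coincide, so $f\squig 0$.

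\emph{The complex Monge--Ampère equation.} The core task is to find $u\in C^\infty_{-\delta}(V)$ solving
\begin{equation*}
  (\omega_0+i\partial\bar\partial u)^{3}=e^{f}\omega_0^{3}
  \qwithq \omega_0+i\partial\bar\partial u>0;
\end{equation*}
setting $\omega:=\omega_0+i\partial\bar\partial u$ and $\Omega:=\Omega_0$ then produces the desired \acylcy\ structure. I expect this to be the main obstacle. The natural approach is Yau's continuity method applied to $f_\tau:=\tau f$ for $\tau\in[0,1]$. For openness, the linearization at a solution is (up to a positive factor) $\Delta_{\omega_0}$, which is asymptotic to the cylindrical Laplacian on $\R_+\times \mathbb{S}^1\times\Sigma$; since the cross-sectional Laplacian on $\mathbb{S}^1\times\Sigma$ has a strictly positive first nonzero eigenvalue, Proposition~\ref{prop:non-critical-fredholm} yields Fredholmness of $\Delta_{\omega_0}\co C^{k+2,\alpha}_{-\delta}\to C^{k,\alpha}_{-\delta}$ for all sufficiently small $\delta>0$, while integration by parts on exponentially decaying functions rules out kernel. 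Closedness reduces to weighted a priori estimates: the $C^0$ bound from a Yau-type maximum principle combined with an exponentially decaying barrier proportional to $e^{-\delta t}$; the $C^2$ estimate from Yau's classical calculation applied locally on $V$ (valid since $V$ has bounded geometry at infinity); and finally weighted Schauder bootstrap in $C^{k,\alpha}_{-\delta}$ to upgrade to $u\in C^\infty_{-\delta}$. Combining openness with these estimates gives a solution at $\tau=1$ and completes the construction.
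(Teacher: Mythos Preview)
The paper does not give its own proof of this statement: Theorem~\ref{thm:hhn} is quoted verbatim as Theorem~D of Haskins--Hein--Nordstr\"om and used as a black box, with only a remark that the result was first claimed by Kovalev. So there is nothing in the paper to compare your argument against.

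That said, your sketch is a faithful outline of the strategy actually employed in the cited reference (and, before it, by Tian--Yau and Kovalev): build an asymptotically cylindrical background K\"ahler metric and holomorphic volume form with exponentially decaying Ricci potential, then run a continuity method for the complex Monge--Amp\`ere equation in weighted H\"older spaces. Two small points you gloss over are worth flagging. First, your appeal to Proposition~\ref{prop:non-critical-fredholm} is not quite on the nose: that proposition is stated for operators asymptotic to $\partial_t-D$ with $D$ first-order self-adjoint, whereas $\Delta_{\omega_0}$ is second order; the Lockhart--McOwen theory certainly still applies, but one needs the appropriate second-order version rather than the one quoted. Second, the $C^0$ estimate on a non-compact manifold is genuinely the delicate step in this argument---Yau's Moser iteration does not work directly without a uniform Sobolev inequality, and the ``maximum principle plus barrier'' you invoke requires justification (in Haskins--Hein--Nordstr\"om this is handled via a weighted Sobolev inequality and careful analysis of the nonlinearity). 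These are refinements rather than gaps in the overall strategy.
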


\begin{remark}
  This result was first claimed by Kovalev in~\cite{Kovalev2003}*{Theorem~2.4}; see the discussion in~\cite{Haskins2015}*{\S 4.1}.
\end{remark}

\subsubsection{Gluing ACyl \texorpdfstring{$\rG_2$}{G2}--manifolds}
\label{sec:gluing-acyl-g2-manifolds}

We may now describe the gluing of matching pairs of \acyl $\rG_2$--manifolds, obtained from \acylcy\ given by Theorem \ref{thm:hhn}.
  
\begin{definition}
\label{def: ACyl G2}
  Let  $(Y,\phi)$ be a $\rG_2$--manifold with tubular end and asymptotic cross-section given by a compact Calabi--Yau $3$--fold  $(W,\omega,\Omega)$. 
   Then $Y$ is called \emph{asymptotically cylindrical (\acyl)} if 
  \begin{equation*}
    \phi\rightsquigarrow\rd t\wedge\omega+\Re\Omega,
  \end{equation*}
  where $t$ denotes the coordinate on $\R_+$.
\end{definition}

Taking the product of an \acylcy\  $(V,\omega,\Omega)$ with $ \mathbb{S}^1$, with coordinate $\beta$, yields an \acyl $\rG_2$--manifold 
\begin{equation*}
  (Y:= \mathbb{S}^1\times V, \phi:=\rd\beta\wedge\omega+\Re\Omega)
\end{equation*}
with asymptotic cross section
\begin{equation*}
  (W:=\mathbb{T}^2\times \Sigma,\omega:=\rd\alpha\wedge\rd\beta+\omega_K,\Omega:=(\rd\alpha-i\rd\beta)\wedge(\omega_J+i\omega_I)).
\end{equation*}

Let $V_\pm$ be a matching pair of \acylcy\  with asymptotic cross section $\Sigma_\pm$ and suppose that $\fr\co \Sigma_+\to \Sigma_-$ is a \hk rotation.
A pair of \acyl $\rG_2$--manifolds $(Y_\pm,\phi_\pm)$ with asymptotic cross sections $(W_\pm,\omega_\pm,\Omega_\pm)$ as above is said to \emph{match} if there exists a diffeomorphism 
$$
\begin{array}{rcrcl}
q&\co&\mathbb{T}^2\times \Sigma_+&\longrightarrow&\mathbb{T}^2\times \Sigma_-\\
  &&f(\alpha,\beta,x)&:=&(\beta,\alpha,\fr(x)).
\end{array}
$$such that
  \begin{equation*}
    q^*\omega_-=-\omega_+ \quad\text{and}\quad
    q^*\Re\Omega_-=\Re\Omega_+.
  \end{equation*}

\begin{remark}
  If $q$ did not interchange the $ \mathbb{S}^1$--factors, then $Y$ would have infinite fundamental group and, hence, could not carry a metric with holonomy equal to $\rG_2$ \cite{Joyce2000}*{Proposition~10.2.2}.
\end{remark}

Let $(Y_\pm,\phi_\pm)$ be a matching pair of \acyl $\rG_2$--manifolds.
For fixed  $T\geq 1$, define 
$$
\begin{array}{rcrcl}
Q&\co&[T,T+1]\times Z_+&\longrightarrow&[T,T+1]\times Z_-\\
  &&  Q(t,z)&:=&\(2T+1-t,q(z)\)
\end{array}
$$
and denote by $Y_T$ the compact $7$--manifold obtained by gluing $Y_\pm$ together at neck length $T$ via $Q$:
\begin{equation*}
  Y_{T,\pm}:=(Y_0)_\pm\cup_Q \pi_\pm^{-1}\((0,T+1]\times Z_\pm\).
\end{equation*}
Fix a non-decreasing smooth cut-off function $\chi\co\R\to[0,1]$ \label{f:chi} with $\chi(t)=0$ for $t\leq 0$ and $\chi(t)=1$ for $t\geq 1$.
Define a $3$--form $\tilde\phi_T$ on $Y_T$ by
\begin{equation*}
  \tilde\phi_T=\phi_\pm-\rd[(\chi(t-T+1)(\omega_\pm-\pi_\pm^*\omega_{\infty,\pm}))]
\end{equation*}
on $Y_{T,\pm}$.
If $T\gg 1$, then $\tilde\phi_T$ defines a closed $\rG_2$--structure on $Y_T$.
Clearly, all the $Y_T$ for different values of $T$ are diffeomorphic; hence, we often drop the $T$ from the notation.
The $\rG_2$--structure $\tilde\phi_T$ is not torsion-free yet, but can be made so by a small perturbation:

\begin{theorem}[\cite{Kovalev2003}*{Theorem~5.34}]
  \label{thm:kovalev}
  In the above situation there exist a constant $T_0\geq 1$ and, for each $T\geq T_0$, a $2$--form $\eta_T$ on $Y_T$ such that $\phi_T:=\tilde\phi_T+\rd\eta_T$ defines a torsion-free $\rG_2$--structure and for some $\delta>0$
  \begin{equation}
    \label{eq:eta-estimate}
    \|\rd \eta_T\|_{C^{0,\alpha}}=O(e^{-\delta T}).
  \end{equation}
\end{theorem}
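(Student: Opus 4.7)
The plan is to construct $\eta_T$ by solving a nonlinear elliptic PDE, following a three-step strategy: (i) show that the pre-glued structure $\tilde\phi_T$ has exponentially small torsion; (ii) establish uniform invertibility in $T$ of the relevant linearized operator; (iii) apply a contraction-mapping argument. The starting observation is that $\tilde\phi_T$ is already closed by construction, so only $d*_{\phi_T}\phi_T=0$ needs to be achieved; since the Hodge dual depends nonlinearly on $\phi_T$ itself, this is a genuine nonlinear PDE in the correction $\eta_T$.

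For step (i) I would estimate directly from the definition of $\tilde\phi_T$: it coincides with $\phi_\pm$ outside the neck region $[T,T+1]\times W_\pm$, so its torsion is supported in the overlap. There the difference $\omega_\pm-\pi_\pm^{*}\omega_{\infty,\pm}$ decays like $O(e^{-\delta t})$ by the ACyl property of the Calabi--Yau structure (Definition~\ref{def: ACylCY3}), and the matching condition $q^*\omega_-=-\omega_+$ ensures that the two halves are compatible on the model cylinder up to exponentially small error. A product-rule computation with the cutoff $\chi(t-T+1)$ then yields an estimate of the form $\|d*_{\tilde\phi_T}\tilde\phi_T\|_{C^{0,\alpha}}=O(e^{-\delta T})$.

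For step (ii) I would fix a Coulomb-type gauge $d^*\eta=0$, under which the linearization of the torsion-free equation at $\tilde\phi_T$ becomes an elliptic self-adjoint second-order operator $L_T$ on $Y_T$ whose coefficients approach, as $T\to\infty$, a translation-invariant cylindrical operator $L_\infty$ on $\R\times W$. The spectral theory of Theorem~\ref{thm:mazya-plamenevskii} and Proposition~\ref{prop:non-critical-fredholm} gives the Fredholm picture for the asymptotic operator, whose obstructions come from parallel translation-invariant forms on the cross-section $W=\mathbb{T}^2\times\Sigma$. Matching these harmonic pieces across the neck via $q$ kills the obstruction classes on the glued manifold $Y_T$, up to a finite-dimensional cohomological correction that can be absorbed into the closed part of the $\rG_2$-structure. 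The output is a right inverse of $L_T$, on the orthogonal complement of the harmonic $3$-forms, with operator norm uniformly bounded in $T$.

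Writing the full nonlinear problem schematically as $L_T\eta + Q(\nabla\eta) = -d*_{\tilde\phi_T}\tilde\phi_T$, where $Q$ is a smooth map vanishing quadratically at zero with locally bounded coefficients, I would apply Banach's fixed-point theorem to the map $\eta\mapsto L_T^{-1}(-d*_{\tilde\phi_T}\tilde\phi_T - Q(\nabla\eta))$ on a ball of radius $C\cdot e^{-\delta T}$ in a suitable $C^{2,\alpha}$ space on $Y_T$. Steps (i)--(ii) guarantee that, for $T\geq T_0\gg 1$, this ball is preserved and the map is a contraction, producing the desired $\eta_T$ with $\|\eta_T\|_{C^{2,\alpha}}=O(e^{-\delta T})$, from which \eqref{eq:eta-estimate} follows. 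The principal obstacle is step (ii): proving that the bound on $\|L_T^{-1}\|$ is genuinely uniform as $T\to\infty$ requires a patching argument comparing $L_T$ to the model operator on $\R\times W$, together with a careful Mayer--Vietoris-type use of the matching data $q$ to reconcile the harmonic pieces of $Y_+$, $Y_-$, and $\R\times W$.
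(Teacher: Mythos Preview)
The paper does not supply a proof of this theorem; it is simply quoted from Kovalev's original article (and is itself an adaptation of Joyce's perturbation method for torsion-free $\rG_2$--structures). There is therefore no ``paper's own proof'' to compare your proposal against.

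That said, your outline is in the right spirit and broadly matches the Joyce--Kovalev strategy: the pre-glued $\tilde\phi_T$ is closed with exponentially small torsion supported in the neck, one inverts a linearised operator with control uniform in $T$, and a contraction-mapping argument closes the loop. Two technical points in your sketch diverge from the actual argument. First, the problem is not set up via a Coulomb gauge $d^*\eta=0$; rather one writes $\phi_T=\tilde\phi_T+d\eta_T$ and expands the nonlinear map $\phi\mapsto *_\phi\phi$ so that the principal linear part is $dd^*$ acting on $3$--forms, solved by Hodge theory on the compact $Y_T$. Second, the uniform invertibility does not come from the cylindrical Fredholm theory of Theorem~\ref{thm:mazya-plamenevskii} and Proposition~\ref{prop:non-critical-fredholm} (those concern non-compact manifolds), but from a lower bound on the first eigenvalue of the Laplacian on $Y_T$ that degenerates only polynomially in $T$; since the torsion is $O(e^{-\delta T})$, this polynomial loss is harmless. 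Your invocation of Mayer--Vietoris-type matching of harmonic pieces is not needed at this stage.
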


In summary, the TCS Theorem \ref{thm:tcs} is established by the following procedure.
For any building block $(Z,\kd)$, the noncompact $3$--fold  
$V := Z \setminus \kd$ admits \acyl\ Ricci-flat K\"ahler metrics
(Theorem \ref{thm:hhn}) hence
an \acylcy\  structure whose asymptotic limit defines a \hk structure on $\kd$. 
Given a matching pair of such Calabi-Yau manifolds $V_\pm$,  one can apply Theorem \ref{thm:kovalev} to
glue $ \mathbb{S}^1 \times V_\pm$ into a closed manifold $Y$ with a $1$-parameter family of torsion-free \gtstr s
 \cite[Theorem 3.12]{Corti2015}.

\section{The  $\rG_2$-instanton gluing theorem}

Let $A$ be an ASD instanton on a $\bP U(n)$-bundle $F$ over a K\"ahler surface $\kd$. The linearisation of the instanton moduli space $\mathcal{M}_{\kd}$ near $A$ is modelled on the kernel of the deformation operator
$$
\mathbb{D_A}:= \rd^{*}_{A}\oplus\rd^{+}_{A}: \Omega^{1}(\kd,\mathfrak{g}_{F})
\to 
(\Omega^{0}\oplus\Omega^{+})(\kd,\mathfrak{g}_{F}).
$$ 
  Let  $\cF$ be the corresponding holomorphic vector bundle
(\cf Donaldson-Kronheimer \cite{Donaldson1990}), and denote by $f$ the Hitchin-Kobayashi isomorphism: 
\begin{equation}
\label{eq: isomorphism f}
f: H^{1}(\kd,\Endr_{0}(\cF))\overset{\sim}{\longrightarrow} H^{1}_{A}:=\ker\mathbb{D_A}.
\end{equation}

\begin{theorem}[{\cite[Theorem 1.2]{SaEarp2015b}}]
  \label{thm:HenriqueThomas}
  \label{thm:itcs}
Let $Z_{\pm}$ ,$\kd_{\pm}$, $\kclass_{\pm}$, $\hkr$, $X$ and $\phi_T$ be
as in
Theorem \ref{thm:tcs}.
Let $\cE_{\pm} \to Z_{\pm}$ be a pair of holomorphic vector bundles such
that the following hold: 
\begin{description}
\item[Asymptotic stability]

$\cE_{\pm}|_{\kd_{\pm}}$ is $\mu$-stable with respect to
$\kclass_{\pm}|_{\kd_{\pm}}$.
Denote the corresponding ASD instanton by $A_{\infty,\pm}$. 

\item[Compatibility]
There exists a bundle isomorphism
$\overline{\hkr}:\cE_{+}|_{\kd_{+}}\rightarrow \cE_{-}|_{\kd_{-}}$ covering the \hk
rotation $\hkr$ such that $\overline{\hkr}^{*} A_{\infty,-}=A_{\infty,+}$.

\item[Inelasticity]
There are no infinitesimal deformations of $\cE_{\pm}$ fixing the restriction to
$\kd_{\pm}$:
\begin{equation}      \label{eq:no-deformations}
  H^{1}(Z_{\pm},\Endr_{0}(\cE_{\pm})(-\kd_{\pm}))=0.
\end{equation}

\item[Transversality]
If $\lambda_{\pm}:=f_\pm\circ\res_\pm :
H^{1}(Z_{\pm},\Endr_{0}(\cE_{\pm}))\rightarrow H^{1}_{A_{\infty,\pm}}$ denotes
the composition of restrictions to $\kd_{\pm}$ with the isomorphism
\eqref{eq: isomorphism f}, then the image of $\lambda_{+}$ and
$\overline{\hkr}^{*}\circ \lambda_{-}$ intersect trivially in
the linear space $H^{1}_{A_{\infty,+}}$:
\begin{equation}      \label{eq:trivial-intersection}
  \im (\lambda_{+})\cap \im (\overline{\hkr}^{*}\circ\lambda_{-})=\left\{0\right\}.
\end{equation}
\end{description}
Then there exists a $U(r)$-bundle $E$ over $Y$ and a family of connections $\left\{A_{T}\ :\ T\gg1\right\}$ on the associated
$\bP U(r)$-bundle, such that each $A_{T}$ is an irreducible unobstructed
$G_{2}$-instanton over $(Y,\phi_{T})$.
\end{theorem}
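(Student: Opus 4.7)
My plan is to follow the standard gluing paradigm: first produce ACyl $\rG_2$-instantons on each side from the holomorphic input, splice them into an approximate solution $\tilde A_T$ on the twisted connected sum $Y_T$, and then perturb to a genuine instanton via a quantitative inverse function theorem whose success will hinge on uniform invertibility of the linearised operator as $T\to\infty$. To start, I would invoke the asymptotic stability hypothesis, which forces $\cE_\pm|_{\Sigma_\pm}$ to carry the unique ASD instanton $A_{\infty,\pm}$ by the Donaldson--Uhlenbeck--Yau theorem on the $K3$ cross-section, and then extend it to a Hermitian--Yang--Mills connection $B_\pm$ on $\cE_\pm|_{V_\pm}$ via \cite[Theorem~58]{SaEarp2015a}. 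This ACyl HYM connection is asymptotically translation-invariant with exponential decay rate $\delta>0$, so pulling back to $\mathbb{S}^1\times V_\pm$ yields an ACyl $\rG_2$-instanton $A_\pm$ on a $\PU(r)$-bundle whose tubular limit is the pullback of $A_{\infty,\pm}$.

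Next I would glue. The compatibility isomorphism $\bar\hkr$ identifies the two asymptotic bundles-with-connection, so the cut-off construction used for $\tilde\phi_T$ in Theorem~\ref{thm:kovalev} has a direct gauge-theoretic analogue: fixing a neck length $T\gg1$, I cut off $A_\pm-A_{\infty,\pm}$ with the same function $\chi$ and glue via $Q$ to obtain a $\PU(r)$-bundle $E\to Y$ together with an approximate $\rG_2$-instanton $\tilde A_T$ on it. The error $F_{\tilde A_T}\wedge\ast\phi_T$ will be supported in the neck $[T-1,T+1]\times\mathbb{T}^2\times\Sigma_+$ and bounded by $O(e^{-\delta T})$ in a suitably weighted Hölder norm, since on each side $A_\pm$ differs exponentially from its asymptotic limit and the perturbation $\rd\eta_T$ of $\tilde\phi_T$ is also exponentially small by \eqref{eq:eta-estimate}.

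The core analytic step will be to show that the augmented linearised operator
\begin{equation*}
  L_T := \rd_{\tilde A_T}^{*} \oplus \bigl(\pi_7\circ\rd_{\tilde A_T}\bigr)\colon \Omega^1(Y,\frg_E) \longrightarrow \bigl(\Omega^0\oplus\Omega^2_7\bigr)(Y,\frg_E)
\end{equation*}
is uniformly invertible for large $T$, with an operator-norm bound growing at worst polynomially in $T$. I would argue by contradiction and neck-stretching: a sequence of normalised near-kernel elements on $Y_{T_n}$ would extract, after rescaling and excision, decaying kernel elements $a_\pm\in\ker L_{0,\pm}$ on $\mathbb{S}^1\times V_\pm$ whose asymptotic leading terms $\iota(a_\pm)\in\ker D_{A_{\infty,\pm}}\cong H^1_{A_{\infty,\pm}}$ agree under $\bar\hkr^{*}$ by Proposition~\ref{prop:abstract-iota}. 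The heart of the matter is then to identify $\im\iota$ precisely with $\im\lambda_\pm=\im(f_\pm\circ\res_\pm)$, i.e.\ to show that every element of $\ker L_{0,\pm}$ arises by restriction from a global holomorphic endomorphism-valued cohomology class on $Z_\pm$. Granted this, the transversality hypothesis \eqref{eq:trivial-intersection} forces $\iota(a_\pm)=0$, and the inelasticity hypothesis \eqref{eq:no-deformations} then upgrades this to $a_\pm=0$ via Proposition~\ref{prop:abstract-iota}, contradicting the normalisation.

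With uniform invertibility in hand, I would conclude by a contraction mapping applied to the fixed-point equation
\begin{equation*}
  a \;=\; -L_T^{-1}\bigl(\pi_7 F_{\tilde A_T}+\tfrac{1}{2}\pi_7[a\wedge a]\bigr),
\end{equation*}
which produces a unique small solution $a_T$ of size $O(e^{-\delta T})$, since the error is exponentially small while the nonlinearity is quadratic. Setting $A_T:=\tilde A_T+a_T$ then yields the desired $\rG_2$-instanton: irreducibility is inherited from $A_\pm$, and unobstructedness is another face of the uniform invertibility of $L_T$. The main obstacle will be the identification of $\im\iota$ with $\im\lambda_\pm$ in the previous paragraph; it is precisely there that sheaf cohomology meets gauge theory, where the two hypotheses \eqref{eq:no-deformations} and \eqref{eq:trivial-intersection} earn their keep, and where one must carefully marry the Hitchin--Kobayashi correspondence on $\Sigma_\pm$ with the asymptotic expansion of $\rd_{A_\pm}$-harmonic bundle-valued forms along the cylindrical end.
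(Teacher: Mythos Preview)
Your proposal is correct and takes essentially the same approach as the paper: produce ACyl HYM connections via Theorem~\ref{thm:saearp}, pull back to $\rG_2$--instantons on $\mathbb{S}^1\times V_\pm$, splice and perturb via a fixed-point argument controlled by uniform invertibility of the linearisation, with the crucial identification of $\im\iota_\pm$ with $\im\lambda_\pm$ being precisely the content of Proposition~\ref{prop:diagram}. The paper merely organises this differently, factoring through an abstract ACyl gluing theorem (Theorem~\ref{thm:gluing}) and then invoking Proposition~\ref{prop:diagram} to translate the holomorphic hypotheses \eqref{eq:no-deformations}--\eqref{eq:trivial-intersection} into the analytic ones \eqref{eq:trivial-intersection-2}.
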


The asymptotic stability assumption guarantees finite energy of Hermitian bundle metrics on  $\cE_\pm|_{V_\pm}$ (see \cite[\S2.2]{}), which are equivalent to asymptotically translation-invariant HYM connections $A_\pm\squig A_{\infty,\pm}$, under the Chern correspondence (cf. Theorem \ref{thm:saearp}).
The maps $\lambda_+$ and
$\overline{\hkr}^{*}\circ\lambda_{-}$ can be seen geometrically as linearisations of the
natural inclusions of the moduli of asymptotically stable bundles
$\mathcal{M}_{Z_\pm}$ into the moduli of ASD instantons $\mathcal{M}_{\kd_+}$
over the $K3$ surface `at infinity', and we think of $H^{1}_{A_{\infty,+}}$ as a
tangent model of  $\mathcal{M}_{\kd_+}$ near the ASD instanton $A_{\infty,+}$.
Then the transversality condition asks that the actual inclusions intersect
transversally at $A_{\infty,+}\in\mathcal{M}_{\kd_+}$.
That the intersection points are isolated reflects that the resulting
$\rm G_2$-instanton is rigid, since it is unobstructed and the deformation
problem has index $0$.

\begin{remark}
  If $H^1(\Sigma_+,\cEnd_0(\cE_+|_{\Sigma_+}))=\{0\}$, then \eqref{eq:trivial-intersection} is vacuous.
  If, moreover, the topological bundles underlying $\cE_\pm$ are isomorphic, then the existence of $\bar\fr$ is guaranteed by~\cite{Huybrechts1997}*{Theorem~6.1.6}.
\end{remark}

Furthermore, condition \eqref{eq:no-deformations} yields  a short exact sequence, which is self-dual under Serre duality: 
$$
  0 \to H^1(Z_\pm,\cEnd_0(\cE_\pm))
    \to H^1(\Sigma_\pm,\cEnd_0(\cE_\pm|_{\Sigma_\pm})) 
    \to H^2(Z_\pm,\cEnd_0(\cE_\pm)(-\Sigma_\pm)) \to 0.
$$
This implies~\cite{Tyurin2012}*{p.\,176 ff.} that 
\begin{equation*}
  \im \lambda_\pm \subset H^1_{A_{\infty,\pm}} 
\end{equation*}
is a complex Lagrangian subspace with respect to the complex symplectic structure induced by $\Omega_\pm:=\omega_{J,\pm}+i\omega_{K,\pm}$ or, equivalently, Mukai's complex symplectic structure on $H^1(Z_\pm,\cEnd_0(\cE_\pm))$.
Under the assumptions of Theorem~\ref{thm:itcs} the moduli space $\sM_{\Sigma_+}$ of holomorphic bundles over $\Sigma_+$ is smooth near $[\cE_+|_{\Sigma_+}]$ and so are the moduli spaces $\sM_{Z_\pm}$ of holomorphic bundles over $Z_\pm$ near $[\cE_\pm]$.
Locally, $\sM_{Z_\pm}$ embeds as a complex Lagrangian submanifold into $\sM_{\kd_\pm}$.
Since $\fr^*\omega_{K,-}=-\omega_{K,+}$, both $\sM_{Z_+}$ and $\sM_{Z_-}$ can be viewed as Lagrangian submanifolds of $\sM_{\kd_+}$ with respect to the symplectic form induced by $\omega_{K,+}$.  Equation~\eqref{eq:trivial-intersection} asks for these Lagrangian submanifolds to intersect transversely at the point $[\cE_+|_{\Sigma_+}]$.
If one thinks of $\rG_2$--manifolds arising via the twisted connected sum construction as analogues of $3$--manifolds with a fixed Heegaard splitting, then this is much like the geometric picture behind Atiyah--Floer conjecture in dimension three \cite{Atiyah1988}.

In \S \ref{sec: transv gluing}, we will review a constructive method to obtain explicit examples of such instanton gluing in many interesting cases.

\subsection{Hermitian Yang-Mills connections on ACyl Calabi-Yau $3$-folds}

Suppose $(W,\omega,\Omega)$ is Calabi--Yau $3$--fold and $(Y:=\R\times W,\phi:=\rd t\wedge\omega+\Re\Omega)$ is the corresponding cylindrical $\rG_2$--manifold.
In this section we relate translation-invariant $\rG_2$--instantons over $Y$ with Hermitian--Yang--Mills connections over $W$.

\begin{definition}
  Let $(W,\omega)$ be a Kähler manifold and let $E$ be a $\PU(n)$--bundle over $W$.
  A connection $A\in\sA(E)$ on $E$ is \emph{Hermitian--Yang--Mills (HYM) connection} if
  \begin{equation}
    \label{eq:hym}
    F_A^{0,2} = 0 \qandq \Lambda F_A = 0.
  \end{equation}
  Here $\Lambda$ is the dual of the Lefschetz operator $L:=\omega\wedge\cdot$.
\end{definition}

\begin{remark}
  Instead of working with $\PU(n)$--bundles, one can also work with $\U(n)$--bundles and instead of the second part of~\eqref{eq:hym} require that $\Lambda F_A$ be equal to a constant.
  These view points are essentially equivalent.
\end{remark}

\begin{remark}
  By the first part of \eqref{eq:hym} a HYM connection induces a holomorphic structure on $E$.
  If $W$ is compact, then there is a one-to-one correspondence between gauge equivalence classes of HYM connections on $E$ and isomorphism classes of polystable holomorphic bundles $\cE$ whose underlying topological bundle is $E$, see Donaldson~\cite{Donaldson1985} and Uhlenbeck--Yau~\cite{Uhlenbeck1986}.
\end{remark}

On a Calabi--Yau $3$--fold, \eqref{eq:hym} is equivalent to
\begin{equation*}
  F_A\wedge\Im\Omega=0 \qandq F_A\wedge\omega\wedge\omega=0;
\end{equation*}
hence, using $*(\rd t\wedge \omega+\Re\Omega)=\frac12\omega\wedge\omega-\rd t\wedge\Im\Omega$ one easily derives:

\begin{prop}[\cite{SaEarp2015a}*{Proposition 8}]
  \label{prop:HYM-G2-instanton}
  Denote by $\pi_W\co Y\to W$ the canonical projection.
  $A$ is a HYM connection if and only if $\pi_W^*A$ is a $\rG_2$--instanton.
\end{prop}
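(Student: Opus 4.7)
The plan is a pointwise algebraic computation on $W$, using the explicit formula for $*\phi$ already recorded in the excerpt. Since the connection $\pi_W^{*}A$ is translation-invariant and pulled back from $W$, its curvature is simply $F_{\pi_W^{*}A}=\pi_W^{*}F_A$; in particular it has no $\rd t$-component. I will treat $F:=F_A$ as a $2$-form on $W$ pulled up to $Y$ and compute $F\wedge *\phi$ directly.

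Using the identity $*(\rd t\wedge\omega+\Re\Omega)=\tfrac12\omega\wedge\omega-\rd t\wedge\Im\Omega$ noted just before the statement, I would expand
\begin{equation*}
F\wedge *\phi=\tfrac12\,F\wedge\omega\wedge\omega-\rd t\wedge(F\wedge\Im\Omega).
\end{equation*}
The first summand is a $6$-form on $W$ (no $\rd t$), while the second carries a $\rd t$; hence $F\wedge *\phi=0$ if and only if both
\begin{equation*}
F\wedge\omega\wedge\omega=0\qandq F\wedge\Im\Omega=0
\end{equation*}
hold pointwise on $W$. The remaining task is to match these two scalar identities with the two HYM conditions $\Lambda F=0$ and $F^{0,2}=0$.

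For the first identity, I would use the standard Kähler identity that on a Kähler $3$-fold, for a real $2$-form, $F\wedge\omega\wedge\omega=\tfrac{2}{3}(\Lambda F)\,\omega^{3}$; since $\omega^{3}$ is a nonvanishing volume form, this gives $F\wedge\omega^{2}=0 \iff \Lambda F=0$. For the second, I would decompose $F=F^{2,0}+F^{1,1}+F^{0,2}$ with $F^{2,0}=\overline{F^{0,2}}$ and observe that, for type reasons on a $3$-fold, $F^{1,1}\wedge\Omega=0=F^{1,1}\wedge\bar\Omega$, so
\begin{equation*}
F\wedge\Im\Omega=\tfrac{1}{2i}\bigl(F^{0,2}\wedge\Omega-F^{2,0}\wedge\bar\Omega\bigr),
\end{equation*}
where the two summands live in the distinct bidegrees $(3,2)$ and $(2,3)$. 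Thus each must vanish separately, and because $\Omega$ trivializes the canonical bundle, wedging with $\Omega$ is a pointwise isomorphism $\Lambda^{0,2}\to\Lambda^{3,2}$; hence $F^{0,2}\wedge\Omega=0 \iff F^{0,2}=0$ (and reality then forces $F^{2,0}=0$ as well).

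Putting the two equivalences together yields $F\wedge*\phi=0 \iff F^{0,2}=0$ and $\Lambda F=0$, i.e.\ the HYM system \eqref{eq:hym}. The argument is entirely local and linear, so I do not anticipate a substantive obstacle; the only place worth double-checking is the normalization constant in $F\wedge\omega^{2}=c(\Lambda F)\omega^{3}$, but only its non-vanishing matters for the equivalence.
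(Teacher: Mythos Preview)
Your proposal is correct and follows essentially the same route as the paper. The paper states just before the proposition that, on a Calabi--Yau $3$-fold, \eqref{eq:hym} is equivalent to $F_A\wedge\Im\Omega=0$ and $F_A\wedge\omega\wedge\omega=0$, and then invokes the formula $*\phi=\tfrac12\omega\wedge\omega-\rd t\wedge\Im\Omega$ to conclude; you simply make both steps explicit, including the type-decomposition argument behind that first equivalence.
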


In general, if $A$ is a $\rG_2$--instanton on a $G$--bundle $E$ over a $\rG_2$--manifold $(Y,\phi)$, then the moduli space $\cM$ of $\rG_2$--instantons near $[A]$, i.e., the space of gauge equivalence classes of $\rG_2$--instantons near $[A]$ is the space of small solutions $(\xi,a)\in\(\Omega^0\oplus\Omega^1\)(Y,\fg_E)$ of the system of equations
\begin{equation*}
  \rd_{A}^*a =0 \qandq
  \rd_{A+a}\xi -*(F_{A+a}\wedge\psi) = 0
\end{equation*}
modulo the action of $\Gamma_A\subset \sG$, the stabiliser of $A$, assuming either that $Y$ is compact or appropriate control over  the growth of $\xi$ and $a$.
The infinitesimal deformation theory of $[A]$ is governed by
that equation's  linearisation operator
\begin{equation}
\label{eq:L}
  L_A:=\begin{pmatrix}
    & \rd_A^* \\
    \rd_A & -*(\psi\wedge\rd_A)
  \end{pmatrix}
  \co
  \left(\Omega^0\oplus\Omega^1\right)(Y,\fg_E)\to\left(\Omega^0\oplus\Omega^1\right)(Y,\fg_E).
\end{equation}

\begin{definition}
  \label{def:irreducible-unobstructed}
  $A$ is called \emph{irreducible and unobstructed} if $L_A$ is surjective.
\end{definition}

If $A$ is irreducible and unobstructed, then $\cM$ is smooth at $[A]$.
If $Y$ is compact, then $L_A$ has index zero; hence, is surjective if, and only if, it is invertible; therefore, irreducible and unobstructed $\rG_2$--instantons form isolated points in $\cM$.
If $Y$ is non-compact, the precise meaning of $\cM$ and $L_A$ depends on the growth assumptions made on $\xi$ and $a$; in particular,  $\cM$ may very well be positive-dimensional.

\begin{prop}[{\cite[Proposition 3.13]{SaEarp2015b}}]
  \label{prop:L-dt-D}
  If $A$ is HYM connection on a bundle $E$ over a $\rG_2$--manifold $Y:=\R\times W$ as  in Proposition \ref{prop:HYM-G2-instanton}, then the operator $L_{\pi_W^*A}$ defined in \eqref{eq:L} can be written as
$$    
L_{\pi_W^*A}= \tilde I \del_t + D_{A},
\qwithq        
    \tilde I:=
    \begin{pmatrix}
       & -1 & \\
      1 &   & \\
       && I
    \end{pmatrix}
$$
and
  $D_A\co\left(\Omega^0\oplus\Omega^0\oplus\Omega^1\right)(W,\fg_E)
  \to \left(\Omega^0\oplus\Omega^0\oplus\Omega^1\right)(W,\fg_E)$
  defined by
  \begin{equation}
    \label{eq:D}
    D_A:=
    \begin{pmatrix}
      && \rd_A^*\\
      && \Lambda\rd_A\\
      \rd_A & -I\rd_A & -*(\Im\Omega\wedge\rd_A)
    \end{pmatrix}.
  \end{equation}
\end{prop}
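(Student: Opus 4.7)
The plan is to compute $L_{\pi_W^*A}$ directly from~\eqref{eq:L}, exploiting the product structure of $Y=\R\times W$ and the $t$--translation invariance of $\tilde A:=\pi_W^*A$.

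\textbf{Setup.} A $0$--form on $Y$ is a $t$--dependent $0$--form on $W$, and a $1$--form decomposes uniquely as $a=\xi_1\,\rd t+b$ with $\xi_1\in\Omega^0(W,\fg_E)$ and $b\in\Omega^1(W,\fg_E)$. This yields the identification $(\Omega^0\oplus\Omega^1)(Y,\fg_E)\cong(\Omega^0\oplus\Omega^0\oplus\Omega^1)(W,\fg_E)$ under which both sides of the claimed formula are compared. Since $\tilde A$ is pulled back from $W$, $\rd_{\tilde A}=\rd t\wedge\del_t+\rd_A$, and a routine computation gives
\begin{align*}
  \rd_{\tilde A}^*a &= -\del_t\xi_1+\rd_A^*b,\\
  \rd_{\tilde A}\xi_0 &= \del_t\xi_0\,\rd t+\rd_A\xi_0,\\
  \rd_{\tilde A}a &= \rd t\wedge(\del_t b-\rd_A\xi_1)+\rd_A b.
\end{align*}
The first identity is exactly the first component of the claim and already supplies both the first row of $D_A$ and the $-\del_t\xi_1$ entry of $\tilde I\del_t$; the second identity contributes $\del_t\xi_0$ to the $\rd t$--coefficient of the second component and $\rd_A\xi_0$ to its $W$--part.

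\textbf{The cross term.} All remaining entries arise from $-*_Y(\psi\wedge\rd_{\tilde A}a)$. Using $\phi=\rd t\wedge\omega+\Re\Omega$ together with $*_W\omega=\tfrac12\omega^2$ and $*_W\Re\Omega=\Im\Omega$ on the CY$3$ $W$, one obtains $\psi=*_Y\phi=\tfrac12\omega^2-\rd t\wedge\Im\Omega$. Expanding,
\begin{equation*}
  \psi\wedge\rd_{\tilde A}a \;=\; \tfrac12\omega^2\wedge\rd_A b \;+\; \rd t\wedge\bigl[\tfrac12\omega^2\wedge(\del_t b-\rd_A\xi_1)-\Im\Omega\wedge\rd_A b\bigr],
\end{equation*}
and the cylinder Hodge star identities $*_Y(\rd t\wedge\gamma)=*_W\gamma$ and $*_Y\alpha=(-1)^{|\alpha|}\rd t\wedge*_W\alpha$ (for $\alpha\in\Omega^{|\alpha|}(W)$ pulled back to $Y$) split the result cleanly into a $\rd t$--component and a $W$--component. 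To identify these with the remaining entries of $D_A$, I would invoke the two K\"ahler identities on $W$,
\begin{equation*}
  *_W\bigl(\tfrac12\omega^2\wedge\gamma\bigr)=\pm\Lambda\gamma\ (\gamma\in\Omega^2(W)),
  \qquad
  *_W\bigl(\tfrac12\omega^2\wedge\eta\bigr)=\pm I\eta\ (\eta\in\Omega^1(W)),
\end{equation*}
both elementary consequences of $*_W\omega=\tfrac12\omega^2$ on a K\"ahler $3$--fold. The first produces the $\Lambda\rd_A b$ entry in the second row of $D_A$; the second, applied to the $1$--form $\del_t b-\rd_A\xi_1$, yields the $I\del_t b$ contribution to $\tilde I\del_t$ together with the $-I\rd_A\xi_1$ entry in the third row of $D_A$; and the $\Im\Omega$--term gives the final entry $-*(\Im\Omega\wedge\rd_A)$. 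Regrouping $\del_t$--terms into $\tilde I\del_t$ and the rest into $D_A$ finishes the proof.

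\textbf{Main obstacle.} The proof is ultimately a single book-keeping exercise, but the book-keeping is delicate. One must track the $(-1)^{|\alpha|}$ factor from $*_Y$ applied to pulled-back $W$--forms of various degree, and commit to mutually consistent sign conventions for $\Omega=\omega_J+i\omega_I$ (which fixes the sign of $\Im\Omega$ and hence of the relevant term in $\psi$), for the action of the complex structure $I$ on $1$--forms, and for the Lefschetz adjoint $\Lambda$ --- the choices interact nontrivially and determine the $\pm$'s in the two K\"ahler identities above. The cleanest route is to verify everything in a unitary local frame in which $\omega$ and $\Omega$ take their standard $\C^3$ form; all the identities above can then be checked once by hand, after which comparison with~\eqref{eq:D} is an entry-by-entry match.
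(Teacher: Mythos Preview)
The paper does not actually prove this proposition: it is stated with a citation to \cite[Proposition~3.13]{SaEarp2015b} and no argument is given here. Your approach---splitting $1$--forms on $Y=\R\times W$ as $\xi_1\,\rd t+b$, expanding $\rd_{\tilde A}$ and $\rd_{\tilde A}^*$ along the product, computing $\psi=*_Y\phi=\tfrac12\omega^2-\rd t\wedge\Im\Omega$, and then reducing $-*_Y(\psi\wedge\rd_{\tilde A}a)$ via the cylinder Hodge identities and the two K\"ahler identities on $W$---is exactly the standard direct computation one would expect, and it is correct in outline.

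Two minor comments. First, your parenthetical ``$\Omega=\omega_J+i\omega_I$'' is the formula for the specific cross-section $W=\mathbb{T}^2\times\Sigma$ later in the paper, not for a general Calabi--Yau $3$--fold $W$ as in this proposition; here $\Omega$ is simply the holomorphic volume form, and the only facts you need are $*_W\omega=\tfrac12\omega^2$ and $*_W\Re\Omega=\pm\Im\Omega$. Second, you are right that the entire content is the sign book-keeping: the orientation convention for $\mathrm{vol}_Y$ (whether $\rd t$ comes first or last), the sign of $*_W\Re\Omega$, and the sign of $I$ on $1$--forms all interact, and a single consistent choice pins down the $\pm$'s in your two K\"ahler identities. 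Your suggestion to fix a unitary frame on $\C^3$ and check every identity once is the cleanest way to do this, and is precisely how the cited reference proceeds.
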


\begin{definition}
  Let $A$ be a HYM connection on a $\PU(n)$--bundle $E$ over a Kähler
  manifold $(W,\omega)$.  Set
  \begin{equation*}
    \cH^i_A := \ker\(\delbar_A\oplus\delbar_A^*\co\Omega^{0,i}\(W,\cEnd_0(\cE)\)\to\(\Omega^{0,i+1}\oplus\Omega^{0,i-1}\)\(W,\cEnd_0(\cE)\)\).
  \end{equation*}
  $\cH^0_A$ is called the space of \emph{infinitesimal automorphisms} of $A$ and $\cH^1_A$ is the space of \emph{infinitesimal deformations} of $A$.
\end{definition}

\begin{remark}
  If $W$ is compact, then $\cH^i_A\iso H^i(W,\cEnd_0(\cE))$ where $\cE$ is the holomorphic bundle induced by $A$.
\end{remark}

\begin{prop}[{\cite[Proposition 3.18]{SaEarp2015b}}]
  \label{prop:kerDA-Hi}
  If $(W,\omega,\Omega)$ is a compact Calabi--Yau $3$--fold and $A$ is a HYM connection on a $G$--bundle $E\to W$, then
  \begin{equation*}
    \ker D_A\iso\cH^0_A\oplus\cH^1_A
  \end{equation*}
  where $D_A$ is as in \eqref{eq:D}.
\end{prop}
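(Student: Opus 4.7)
The strategy is to complexify the real triple $(\xi_0,\xi_1,a)$ and recognise, via the K\"ahler and Calabi--Yau identities, that the three rows of $D_Av=0$ collapse to the standard Dolbeault harmonic conditions on $\Omega^0$ and $\Omega^{0,1}$ with values in $\cEnd_0(\cE)$. Set $\xi:=\xi_0+i\xi_1\in\Omega^0(W,\cEnd_0(\cE))$ and $\alpha:=a^{0,1}\in\Omega^{0,1}(W,\cEnd_0(\cE))$; since $a$ is skew-Hermitian-valued, $a^{1,0}=-\alpha^*$, so $a$ is determined by $\alpha$. The candidate isomorphism is then $(\xi_0,\xi_1,a)\mapsto(\xi,\alpha)$. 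Using $\Lambda a=0$ (as $\alpha$ is a $1$-form) together with the K\"ahler identities for the HYM connection $A$, one computes
\[
 \rd_A^*a = \delbar_A^*\alpha-(\delbar_A^*\alpha)^*, \qquad \Lambda\rd_A a = \pm i\bigl(\delbar_A^*\alpha+(\delbar_A^*\alpha)^*\bigr),
\]
which are, up to real scalars, the skew-Hermitian and Hermitian parts of $\delbar_A^*\alpha$ in $\cEnd_0(\cE)=\fg_E\otimes\C$. Hence the first two rows of $D_Av=0$ are jointly equivalent to the single complex equation $\delbar_A^*\alpha=0$.

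For the third row, $I$ acts as $\pm i$ on $\Omega^{1,0/0,1}$, whence $\rd_A\xi_0-I\rd_A\xi_1=\del_A\bar\xi+\delbar_A\xi$. On the Calabi--Yau $3$-fold $W$ the only bidegrees in $\Im\Omega\wedge\rd_Aa$ that can be non-zero are $(3,2)$ and $(2,3)$, coming respectively from $\tfrac{1}{2i}\Omega\wedge\delbar_A\alpha$ and $\tfrac{1}{2i}\bar\Omega\wedge(\delbar_A\alpha)^*$ (using $\del_A\alpha^*=(\delbar_A\alpha)^*$ since $A$ is Hermitian). Applying $*$ and separating the $(1,0)$ and $(0,1)$ parts rewrites the third row as the complex equation
\[
 \delbar_A\xi \;=\; c\cdot\Psi(\delbar_A\alpha)
\]
(and its complex conjugate), where $\Psi:=*\bigl(\bar\Omega\wedge(\cdot)^*\bigr)\co\Omega^{0,2}\to\Omega^{0,1}$ is a pointwise isomorphism fixed by the Calabi--Yau normalisation and $c\neq0$ is an explicit constant. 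Since $\Omega$ is covariantly constant and $F_A^{0,2}=0$ (so $\delbar_A^2\alpha=0$), a short commutation using $\delbar_A^*=-*\del_A*$ gives $\delbar_A^*\Psi(\delbar_A\alpha)=\pm\Psi(\delbar_A^2\alpha)=0$; hence the right-hand side lies in $\ker\delbar_A^*$ while the left-hand side $\delbar_A\xi$ lies in $\im\delbar_A$. On compact K\"ahler $W$ these two subspaces of $\Omega^{0,1}(W,\cEnd_0(\cE))$ are $L^2$-orthogonal, so both sides must vanish: $\delbar_A\xi=0$ and, by injectivity of $\Psi$, $\delbar_A\alpha=0$.

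Together with the first step, this gives $\xi\in\ker\delbar_A=\cH^0_A$ and $\alpha\in\ker\delbar_A\cap\ker\delbar_A^*=\cH^1_A$, so the candidate map lands in $\cH^0_A\oplus\cH^1_A$ and is manifestly injective. The converse direction is trivial: for any $(\xi,\alpha)\in\cH^0_A\oplus\cH^1_A$, one has $\rd_A\xi_j=0$ and $\Im\Omega\wedge\rd_Aa=0$, so the three rows of $D_Av=0$ hold automatically. The main technical obstacle is the third step: pinning down the precise sign and scale of the identification $*(\Im\Omega\wedge\rd_Aa)\leftrightarrow c\cdot\Psi(\delbar_A\alpha)$, and verifying cleanly that $\Psi$ intertwines $\delbar_A$ with $\delbar_A^*$ up to an overall sign, so that the Hodge-orthogonality step decouples $\xi$ from $\alpha$ without hidden coefficient constraints.
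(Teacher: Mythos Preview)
The paper does not give its own proof of this proposition: it is quoted from \cite{SaEarp2015b} without argument, so there is nothing in the present text to compare your attempt against. Your strategy is nonetheless the standard and correct one, and the cited reference proceeds along the same lines: complexify, use the K\"ahler identities to convert rows one and two into $\delbar_A^*\alpha=0$, and then use the Calabi--Yau form to handle row three.

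Two small points are worth tightening. First, your ``converse is trivial'' step claims $\rd_A\xi_j=0$ for $\xi\in\cH^0_A$; this is true but not automatic---it needs the Bochner/Nakano argument that on a compact K\"ahler manifold with $\Lambda F_A=0$ one has $\Delta_{\delbar_A}=\Delta_{\del_A}$ on $0$--forms, whence $\delbar_A\xi=0\Rightarrow\rd_A\xi=0$. Second, your intertwining claim $\delbar_A^*\Psi(\delbar_A\alpha)=\pm\Psi(\delbar_A^2\alpha)$ is correct once you use $\delbar_A^*=-*\del_A*$ (with the convention $*\colon\Omega^{p,q}\to\Omega^{n-q,n-p}$) together with $\del\bar\Omega=0$; the vanishing then follows exactly from $F_A^{0,2}=0$ as you say. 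The orthogonality step---$\im\delbar_A\perp\ker\delbar_A^*$ on the compact $W$---is the clean way to decouple $\xi$ from $\alpha$, and it goes through without any hidden constraint on the constants. So the ``main technical obstacle'' you flag is only a sign/normalisation audit, not a genuine gap.
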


\subsection{Gluing \texorpdfstring{$\rG_2$}{G2}--instantons over ACyl \texorpdfstring{$\rG_2$}{G2}--manifolds}

\begin{definition}
  \label{def:ag2i}
  Let $(Y,\phi)$ be an \acyl $\rG_2$--manifold and let $A$ be a  $\rG_2$--instanton on a $G$--bundle over $(Y,\phi)$ asymptotic to $A_\infty$.
  For $\delta\in\R$ we set
  \begin{equation*}
    \cT_{A,\delta} := \ker L_{A,\delta} 
    = \left\{ \ua \in \ker L_A :  \ua \squigd 0 \right\}.
  \end{equation*}
  where $\ua=(\xi,a)\in\left(\Omega^0\oplus\Omega^1\right)(Y,\fg_E)$.
  Set $\cT_A:=\cT_{A,0}$.
\end{definition}

\begin{prop}[{\cite[Propositions 3.22, 3.23]{SaEarp2015b}}]
  \label{prop:iota}
  Let $(Y,\phi)$ be an \acyl\ $\rG_2$--manifold and let $A$ be a $\rG_2$--instanton asymptotic to $A_\infty$.
  Then there is a constant $\delta_0>0$ such that for all $\delta\in[0,\delta_0]$, $\cT_{A,\delta}=\cT_A$ and there is a linear map $\iota\co \cT_{A} \to \cH^0_{A_\infty}\oplus \cH^1_{A_\infty}$ such that
$$
\ua\overset{\delta_0}{\squig}\iota(\ua).
$$ 
In particular,
$    \ker\iota=\cT_{A,-\delta_0}.$
\label{prop:lagrangian}
Furthermore, 
  $$
  \dim\im\iota 
  = 
  \frac12 \dim\(\cH^0_{A_\infty}\oplus\cH^1_{A_\infty}\)
  $$ 
  and, if $\cH^0_{A_\infty}=0$, then $\im\iota\subset\cH^1_{A\infty}$ is Lagrangian with respect to the symplectic structure on $\cH^1_{A_\infty}$ induced by $\omega$.
\end{prop}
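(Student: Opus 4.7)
The plan is to reduce the three claims to (i) the abstract limit-map result Proposition \ref{prop:abstract-iota}, (ii) a Fredholm-index computation on weighted Hölder spaces, and (iii) an integration-by-parts argument on the cylindrical end.

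First, by Proposition \ref{prop:L-dt-D}, on the tubular end $L_A$ is asymptotic to the translation-invariant operator $L_{A_\infty} = \tilde I \partial_t + D_{A_\infty}$. Since $\tilde I$ is a pointwise isometry with $\tilde I^2 = -\id$, multiplying by $\tilde I^{-1}$ puts the model in the standard form $\partial_t - D$ covered by Theorem \ref{thm:mazya-plamenevskii}, and $\ker L_{A_\infty}$ is unchanged. Applying Proposition \ref{prop:abstract-iota} produces $\delta_0>0$ with $\cT_{A,\delta}=\cT_A$ for all $\delta\in[0,\delta_0]$, together with a linear evaluation map
\begin{equation*}
  \iota\co \cT_A \longrightarrow \ker D_{A_\infty}, \qquad \ua \overset{\delta_0}{\squig} \iota(\ua),
\end{equation*}
whose kernel is exactly $\cT_{A,-\delta_0}$. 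Composing with the Hodge-theoretic isomorphism $\ker D_{A_\infty}\iso \cH^0_{A_\infty}\oplus \cH^1_{A_\infty}$ of Proposition \ref{prop:kerDA-Hi} lands $\iota$ in the target stated in the proposition.

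For the dimension count I would run the standard weight-crossing index calculation. By Proposition \ref{prop:non-critical-fredholm}, $L_{A,\pm\delta_0}$ are Fredholm, and the jump formula across the critical weight $\delta=0$ reads
\begin{equation*}
  \ind(L_{A,\delta_0}) - \ind(L_{A,-\delta_0}) = \dim\ker D_{A_\infty}.
\end{equation*}
Since $L_A$ is formally self-adjoint, $L_2$-duality on weighted spaces gives $\coker(L_{A,\delta})\iso \ker(L_{A,-\delta})$, hence $\ind(L_{A,\delta_0})=-\ind(L_{A,-\delta_0})$. Combining, $\ind(L_{A,\delta_0})=\tfrac12 \dim\ker D_{A_\infty}$. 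The first claim identifies $\cT_A = \ker L_{A,\delta_0}$, and the abstract limit map identifies $\ker\iota = \ker L_{A,-\delta_0}\iso\coker L_{A,\delta_0}$, so
\begin{equation*}
  \dim\im\iota = \dim\cT_A - \dim\ker\iota = \ind(L_{A,\delta_0}) = \tfrac12\dim\bigl(\cH^0_{A_\infty}\oplus \cH^1_{A_\infty}\bigr).
\end{equation*}

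For the Lagrangian statement, assume $\cH^0_{A_\infty}=0$, so $\ker D_{A_\infty}\iso \cH^1_{A_\infty}$ and $\im\iota\subset \cH^1_{A_\infty}$. On the asymptotic cylinder, formal self-adjointness gives the fundamental identity
\begin{equation*}
  \partial_t \langle \tilde I u, v\rangle_{L^2(W)} = \langle L_{A_\infty} u, v\rangle - \langle u, L_{A_\infty} v\rangle,
\end{equation*}
so the bilinear form $\Omega(u,v):=\langle \tilde I u,v\rangle_{L^2(W)}$ is $t$-invariant on $\ker L_{A_\infty}$ and descends to a symplectic form on $\ker D_{A_\infty}$. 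For $\ua_1,\ua_2\in\cT_A$, integrating $\langle L_A\ua_1,\ua_2\rangle - \langle \ua_1,L_A\ua_2\rangle=0$ over the truncated manifold $M_T=\pi^{-1}([0,T]\times W)\cup M_0$ and applying Stokes produces only a boundary contribution at $\{T\}\times W$, which by the decay $\ua_i\squigd \iota(\ua_i)$ tends to $\Omega(\iota(\ua_1),\iota(\ua_2))$. Hence $\im\iota$ is isotropic, and together with the dimension count it is Lagrangian.

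The main obstacle is the last step: one must verify that $\Omega$, built from the block matrix $\tilde I$ of Proposition \ref{prop:L-dt-D}, matches the complex-symplectic form on $\cH^1_{A_\infty}$ induced by the Kähler form $\omega$ (equivalently by $\Omega_W$) under the isomorphism of Proposition \ref{prop:kerDA-Hi}. This requires tracking the Hodge-theoretic identification $\ker D_{A_\infty}\iso \cH^0_{A_\infty}\oplus \cH^1_{A_\infty}$ together with the splitting $T^*Y\iso \pi_W^*T^*W\oplus \R \rd t$, and checking that $\tilde I$ restricts on the $\Omega^1(W,\fg_E)$-factor to the almost-complex structure on $1$-forms while the upper $\Omega^0\oplus \Omega^0$ block pairs $\xi$ with $\Lambda F$-type data. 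Once this algebraic identification is in place, the three claims of Proposition \ref{prop:iota}–\ref{prop:lagrangian} follow from the above inputs.
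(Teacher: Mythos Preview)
Your proposal is correct and follows precisely the approach that the paper's preliminaries are designed to support: the paper itself does not prove this proposition (it is quoted from \cite{SaEarp2015b}), but the scaffolding in \S\ref{sec: mfds w tubular ends}---Theorem~\ref{thm:mazya-plamenevskii}, Proposition~\ref{prop:non-critical-fredholm} and especially Proposition~\ref{prop:abstract-iota}---together with Propositions~\ref{prop:L-dt-D} and~\ref{prop:kerDA-Hi} is laid out exactly so that one can run your three steps (limit map, wall-crossing index computation, boundary pairing for isotropy). This is indeed the argument of \cite{SaEarp2015b}*{Propositions~3.22--3.23}; your identification of the remaining verification---that the pairing $\langle \tilde I\,\cdot\,,\cdot\rangle_{L^2(W)}$ restricts on $\cH^1_{A_\infty}$ to the symplectic form induced by $\omega$---is the only point requiring a short explicit computation, and it goes through by unwinding the block structure of $\tilde I$ against the Hodge identification in Proposition~\ref{prop:kerDA-Hi}.
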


Assume we are in the situation of Proposition~\ref{prop:lagrangian}; if moreover $\ker\iota=0$ and $\cH^0_{A_\infty}=0$, then one can show that the moduli space $\cM_Y$ of $\rG_2$--instantons near $[A]$ which are asymptotic to some HYM connection is smooth.
Although the moduli space $\cM_W$ of HYM connections near $[A_\infty]$ is not necessarily smooth, formally, it still makes sense to talk about its symplectic structure and view $\cM_Y$ as a Lagrangian submanifold.
The following theorem shows that transverse intersections of a pair of such Lagrangians give rise to $\rG_2$--instantons:
\begin{theorem}[{\cite[Theorem 3.24]{SaEarp2015b}}]
  \label{thm:gluing}
  Let $(Y_\pm,\phi_\pm)$ be a pair of \acyl $\rG_2$--manifolds that match via $f\co W_+\to W_-$.
  Denote by $(Y_T,\phi_T)_{T\geq T_0}$ the resulting family of compact $\rG_2$--manifolds arising from the construction in \S~\ref{sec:gluing-acyl-g2-manifolds}.
  Let $A_\pm$ be a pair of $\rG_2$--instantons on $E_\pm$ over $(Y_\pm,\phi_\pm)$ asymptotic to $A_{\infty,\pm}$.
  Suppose that the following hold:
  \begin{itemize}
  \item There is a bundle isomorphism $\bar f\co E_{\infty,+}\to E_{\infty,-}$  covering $f$ such that $\bar f^*A_{\infty,-}=A_{\infty,+}$,
  \item The maps $\iota_\pm\co \cT_{A_\pm}\to \ker D_{A_{\infty,\pm}}$ constructed in Proposition~\ref{prop:iota} are injective and their images intersect trivially:
    \begin{equation}
      \label{eq:trivial-intersection-2}
     \im\(\iota_+\) \cap \im\(\bar f^*\circ\iota_-\)=\{0\} \subset \cH^0_{A_{\infty,+}}\oplus \cH^1_{A_{\infty,+}}.
    \end{equation}
  \end{itemize}
  Then there exists $T_1\geq T_0$ and for each $T\geq T_1$ there exists an irreducible and unobstructed $\rG_2$--instanton $A_T$ on a $G$--bundle\ $E_T$ over $(Y_T,\phi_T)$.
\end{theorem}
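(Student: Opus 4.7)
The plan is to adapt the ACyl gluing scheme standard in gauge theory: form an approximate $\rG_2$-instanton $\tilde A_T$ on $Y_T$, establish a uniform right-inverse estimate for its linearisation $L_{\tilde A_T}$, and then close the nonlinear equation by a contraction-mapping argument. The whole difficulty is concentrated in the linear step, where both the injectivity of $\iota_\pm$ and the transversality hypothesis~\eqref{eq:trivial-intersection-2} enter.

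First I would construct $\tilde A_T$ on $E_T:=E_+\cup_{\bar f}E_-$ by interpolating $A_+$ and $\bar f^*A_-$ with the cut-off $\chi$ across the gluing region. Because $A_\pm\squigd A_{\infty,\pm}$ and $\bar f^*A_{\infty,-}=A_{\infty,+}$, the error $e_T:=*(F_{\tilde A_T}\wedge\psi_T)$ is essentially supported in the neck and satisfies $\|e_T\|_{C^{0,\alpha}_\mu(Y_T)}=O(e^{-(\delta-\mu)T})$ in a weighted Hölder norm of small exponent $\mu\in(0,\delta_0)$, with $\delta_0$ as in Proposition~\ref{prop:iota}; Kovalev's estimate~\eqref{eq:eta-estimate} contributes the analogous correction coming from $\phi_T-\tilde\phi_T$.

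The central step is to show that $L_{\tilde A_T}$ (cf.~\eqref{eq:L}) admits a right inverse $P_T$ with $\|P_T\|$ bounded independently of $T\gg 1$ in the same weighted norm. The plan is a compactness-and-contradiction argument: assume, for some sequence $T_n\to\infty$, that there exist $u_n$ with $\|u_n\|_{C^{1,\alpha}_\mu}=1$ and $\|L_{\tilde A_{T_n}}u_n\|_{C^{0,\alpha}_\mu}\to 0$. By elliptic regularity and the convergence of $\tilde A_{T_n}$ to $A_\pm$ on each half of $Y_{T_n}$, extract subsequential limits $u_\pm\in\ker L_{A_\pm}$, i.e.\ $u_\pm\in\cT_{A_\pm}$; the bound $\mu<\delta_0$ ensures that these limits genuinely belong to the non-decaying kernel. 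Continuity of the asymptotic-limit map $\iota_\pm$ of Proposition~\ref{prop:iota}, together with the matching across the shrinking neck, yields
$$
\iota_+(u_+)=\bar f^*\iota_-(u_-)\in\im(\iota_+)\cap\im(\bar f^*\circ\iota_-),
$$
which is $\{0\}$ by transversality~\eqref{eq:trivial-intersection-2}; hence $u_\pm\in\ker\iota_\pm=\{0\}$ by the injectivity hypothesis. A standard no-concentration estimate in the neck, available in the weighted norm, then contradicts $\|u_n\|=1$.

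Given the uniform right inverse, I would close the nonlinear equation by seeking $A_T=\tilde A_T+a_T$ as a fixed point of a contraction built from $P_T$, the error $e_T$, and the pointwise quadratic nonlinearity $a\wedge a$ in the curvature; since $\|e_T\|=O(e^{-(\delta-\mu)T})$ and $\|P_T\|$ is bounded, Banach's fixed-point theorem produces, for $T\geq T_1$, a unique small $a_T$ and hence a genuine $\rG_2$-instanton $A_T$, smooth by elliptic regularity. Irreducibility and unobstructedness in the sense of Definition~\ref{def:irreducible-unobstructed} follow because on the compact $Y_T$ the index-zero operator $L_{A_T}$ is a small perturbation of the invertible $L_{\tilde A_T}$. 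The principal obstacle throughout is the linear step: the weighted spaces must be sharp enough that limits of would-be kernel sequences land precisely in $\cT_{A_\pm}$, yet weak enough to capture every non-decaying kernel element, so that the transversality and injectivity hypotheses can convert the matching constraint on asymptotic limits into the desired contradiction.
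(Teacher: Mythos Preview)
Your proposal is correct and follows essentially the same strategy as the paper's sketch: build an approximate instanton $\tilde A_T$ by cut-and-paste, solve the linearisation uniformly in $T$, and close by Banach's fixed-point theorem. The paper gives only a three-line sketch, so your outline of the linear step---the compactness-and-contradiction argument exploiting injectivity of $\iota_\pm$ and transversality \eqref{eq:trivial-intersection-2} to rule out a drifting approximate kernel---is in fact more detailed than what appears here, but it is exactly the mechanism one expects (and the one carried out in \cite{SaEarp2015b}).
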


\begin{proof}[Sketch of proof]
  One proceeds in three steps.
  We first produce an approximate $\rG_2$--instanton $\tilde A_T$ by an explicit cut-and-paste procedure.
  This reduces the problem to solving the non-linear partial differential equation
  \begin{equation}
    \label{eq:perturbation}
    \rd_{\tilde A_t}^* a = 0 \qandq
    \rd_{\tilde A_T+a} \xi + *_T(F_{\tilde A_T + a}\wedge \psi_T) = 0.
  \end{equation}
  for $a\in \Omega^1(Y_T,\fg_{E_T})$ and $\xi\in\Omega^0(Y_T,\fg_{E_T})$ where $\psi_T:=*\phi_T$.
  Under the hypotheses of Theorem~\ref{thm:gluing} one can solve the linearisation of \eqref{eq:perturbation} in a uniform fashion.
  The existence of a solution of \eqref{eq:perturbation} then follows from a simple application of Banach's fixed-point theorem.
\end{proof}

\subsection{From holomorphic bundles over building blocks to \texorpdfstring{$\rG_2$}{G2}--instantons over ACyl \texorpdfstring{$\rG_2$}{G2}--manifolds}
\sectionmark{Holomorphic bundles over building blocks and ...}
\label{sec:holomorphic}

We now briefly explain how one may deduce Theorem~\ref{thm:itcs} from Theorem~\ref{thm:gluing}.

Let $(V,\omega,\Omega)$ be an \acylcy\  with asymptotic cross-section $(\Sigma,\omega_I,\omega_J,\omega_K)$. The following theorem can be used to produce examples of HYM connections $A$   on a $\PU(n)$--bundle $E\to V$  asymptotic to an ASD instanton $A_\infty$ on a $\PU(n)$--bundle $E_\infty\to\Sigma$  (here, by a slight additional abuse, we  denote by $E_\infty$ and $A_\infty$ their respective pullbacks to $\R_+\times  \mathbb{S}^1\times\Sigma$).
Hence, by taking the product with $ \mathbb{S}^1$, it yields examples of $\rG_2$--instantons $\pi_V^*A$ asymptotic to $\pi_\Sigma^*A_\infty$ over the \acyl $\rG_2$--manifold $ \mathbb{S}^1\times V$.
Denote the canonical projections in this context by 
$$
\pi_V\co  \mathbb{S}^1\times V \to V
\qandq
\pi_\Sigma \co \mathbb{T}^2\times \Sigma\to \Sigma.
$$

\begin{theorem}[{\cite{SaEarp2015a}*{Theorem~58} \& \cite{Jacob2016}*{Theorem 1.1}}]
  \label{thm:saearp}
  Let $Z$ and $\Sigma$ be as in Theorem~\ref{thm:hhn} and let $(V:=Z\setminus \Sigma,\omega,\Omega)$ be the resulting \acylcy.
  Let $\cE$ be a holomorphic vector bundle over $Z$ and let $A_\infty$ be an ASD instanton on $\cE|_\Sigma$ compatible with the holomorphic structure.
  Then there exists a HYM connection $A$ on $\cE|_V$ which is compatible with the holomorphic structure on $\cE|_V$ and asymptotic to $A_\infty$.
\end{theorem}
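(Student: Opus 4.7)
The plan is to solve the Hermitian--Yang--Mills equation for a Hermitian metric on $\cE|_V$, with the asymptotic value of the metric prescribed by Donaldson's theorem on the cross-section. By \cite{Donaldson1985}, the ASD instanton $A_\infty$ is the Chern connection of a unique Hermitian--Einstein metric $H_\infty$ on the stable bundle $\cE|_\Sigma$. I would pull $H_\infty$ back along the projection $\R_+\times \mathbb{S}^1\times\Sigma\to\Sigma$ and glue it to a smooth Hermitian metric on the compact core of $V$ via a cutoff, obtaining a reference metric $H_0$ on $\cE|_V$ whose Chern connection $A_0$ is asymptotic to $\pi_\Sigma^*A_\infty$. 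Since the ACyl Calabi--Yau structure on $V$ differs from the exact product $\rd t\wedge\rd\alpha+\omega_I$ by a $C^\infty_\delta$ error, the obstruction $i\Lambda F_{A_0}$ lies in $C^\infty_\delta(V,\End(\cE))$ for some $\delta>0$, and is in particular integrable.

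\textbf{PDE reformulation.} Any other Hermitian metric takes the form $H=H_0 e^s$ for a self-adjoint endomorphism $s$, and the HYM condition $i\Lambda F_H=0$ becomes a quasi-linear elliptic equation
\[
  \Delta_{A_0}s+Q(s,\nabla s)=-i\Lambda F_{A_0},
\]
with $Q$ at least quadratic in $(s,\nabla s)$. Solving this with $s\in C^\infty_\delta(V,\End(\cE))$ would yield $A:=A(H_0 e^s)$ as the desired HYM connection, automatically compatible with the given holomorphic structure and asymptotic to $A_\infty$.

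\textbf{Exhaustion and heat flow.} I would exhaust $V$ by smoothly bounded compact domains $V_T$ with $\partial V_T\cong\{T\}\times \mathbb{S}^1\times\Sigma$ and, on each, solve the Dirichlet problem for HYM via the Donaldson heat flow
\[
  H_t^{-1}\partial_t H_t=-2i\Lambda F_{H_t},\qquad H_t|_{\partial V_T}=H_0,
\]
starting from $H_0$. Short-time existence is standard parabolic theory; applying the maximum principle to $\tr(e^{s_t})$ with $s_t:=\log(H_0^{-1}H_t)$ yields a $C^0$-bound depending only on $\|i\Lambda F_{A_0}\|_{L^1(V)}$, which is finite by the previous step. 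Donaldson's compact-case arguments then give long-time existence, convergence, and smoothness of the limit $H_T$. Interior Schauder estimates for the HYM equation produce local $C^{k,\alpha}$ bounds on $s_T:=\log(H_0^{-1}H_T)$ independent of $T$, so a diagonal subsequence converges in $C^\infty_{\mathrm{loc}}$ to some bounded $s_\infty$ solving $i\Lambda F_{H_0 e^{s_\infty}}=0$ on all of $V$.

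\textbf{Main obstacle.} The hard part, and the genuinely new difficulty compared to the compact Donaldson--Uhlenbeck--Yau setting, is upgrading $s_\infty\in L^\infty$ to $s_\infty\in C^\infty_\delta$. Here the asymptotic Fredholm theory recalled in \S\ref{sec: mfds w tubular ends} is essential. The linearisation of the HYM equation around $s_\infty$ is asymptotically translation-invariant on the tubular end, with cross-sectional operator $D$ whose kernel is controlled through a Weitzenb\"ock identity by $\cH^0_{A_\infty}\oplus\cH^1_{A_\infty}$. Stability of $\cE|_\Sigma$ forces $\cH^0_{A_\infty}=0$, so $0$ is isolated in the relevant part of $\spec(D)$, and Proposition~\ref{prop:abstract-iota} supplies a leading-order asymptotic value $\iota(s_\infty)\in\cH^1_{A_\infty}$. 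The vanishing Dirichlet data used in the exhaustion then force $\iota(s_\infty)=0$, so $s_\infty\in C^\infty_{-\delta_0}$ and the Chern connection of $H_0 e^{s_\infty}$ is the HYM connection asymptotic to $A_\infty$ required by the theorem.
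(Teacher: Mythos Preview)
This theorem is not proved in the survey; it is quoted from \cite{SaEarp2015a} and \cite{Jacob2016}, and the remark immediately following it flags that the exponential-decay assertion in \cite{SaEarp2015a} was ``not satisfactory'' and had to be repaired by \cite{Jacob2016}. Your overall strategy---reference metric asymptotic to $H_\infty$, Donaldson heat flow on a compact exhaustion with Dirichlet data, uniform interior estimates, diagonal limit---is indeed the method of \cite{SaEarp2015a}, so on the existence side you are aligned with the cited source.

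The genuine gap in your proposal is exactly where the paper warns it lies: the decay $s_\infty\in C^\infty_{-\delta_0}$. Your appeal to Proposition~\ref{prop:abstract-iota} fails on several counts. First, that proposition concerns elements of $\ker L_0$ for a \emph{fixed linear} first-order operator asymptotic to $\partial_t - D$; your $s_\infty$ solves a \emph{nonlinear second-order} equation, and linearising around $s_\infty$ gives an asymptotically translation-invariant operator only once $s_\infty$ is already known to decay---circular. Second, the cross-sectional operator for the Hermitian-metric PDE is $\Delta_{A_\infty}$ acting on self-adjoint endomorphisms, whose kernel (by simplicity of the stable bundle $\cE|_\Sigma$) is $\R\cdot\id$, not $\cH^0_{A_\infty}\oplus\cH^1_{A_\infty}$; you have confused the scalar metric equation with the first-order connection-deformation operator $D_A$ of \eqref{eq:D}. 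Third, the Dirichlet condition $s_T|_{\partial V_T}=0$ on boundaries that recede to infinity carries no information about the asymptotic constant of the limit, so ``vanishing Dirichlet data force $\iota(s_\infty)=0$'' is unjustified. The route actually taken in \cite{Jacob2016}, following \cite{Bando1994}, is not linear Fredholm theory but a nonlinear maximum-principle argument: one derives a differential inequality of the form $\Delta\sigma \geq -C e^{-\delta t}$ for a scalar such as $\sigma = \log\tr(H_0^{-1}H)$ on the cylindrical end and compares with an explicit exponentially decaying supersolution.
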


\begin{remark}
The last assertion of the exponential decay $A\squig A_\infty$ is claimed in \cite{SaEarp2015a}*{Theorem~58} but its proof in that reference is not  satisfactory. That part of the theorem is  essentially superseded by \cite{Jacob2016}*{Theorem 1.1},  which additionally extends this existence result to \emph{singular} $\rG_2$-instantons, obtained from asymptotically stable reflexive sheaves, following in spirit the argument from \cite{Bando1994} for the compact case.
\end{remark}

This together with Theorem~\ref{thm:gluing} and the following result immediately implies Theorem~\ref{thm:itcs}.

\begin{prop}[{\cite[Proposition 4.3]{SaEarp2015b}}]
  \label{prop:diagram}
  In the situation of Theorem~\ref{thm:saearp}, 
  suppose $H^0(\Sigma,\cEnd_0(\cE|_\Sigma))=0$.
  Then
  \begin{equation}
    \label{eq:H1-holomorphic-ASD}
    \cH^1_{\pi_\Sigma^*A_\infty} = H^1_{A_\infty}
  \end{equation}
  and, for some small $\delta>0$, there exist injective linear maps 
  $  \kappa_-$ and $\kappa$  such that the following diagram commutes:
  \begin{eqnarray}
  \label{eq:diagram}&&
    \begin{tikzpicture}[baseline=(current  bounding  box.center)]
      \matrix (m) [matrix of math nodes,row sep=3em,column
      sep=2em,minimum width=2em ] {
        \cT_{\pi_V^*A,-\delta} & \cT_{\pi_V^*A} & \cH^1_{\pi_\Sigma^*A_\infty} \\
        H^1(Z,\cEnd_0(\cE)(-\Sigma)) &H^1(Z,\cEnd_0(\cE)) &
        H^1(\Sigma,\cEnd_0(\cE|_\Sigma)). \\}; \path[-stealth]
      (m-1-1) edge 
      (m-1-2) edge node [right] {$\kappa_{-}$} (m-2-1)
      (m-1-2) edge node [above] {$\iota$} (m-1-3)
              edge node [right] {$\kappa$} (m-2-2)
      (m-1-3) edge node [right] {$\iso$} (m-2-3)
      (m-2-1) edge (m-2-2)
      (m-2-2) edge (m-2-3);
    \end{tikzpicture}
  \end{eqnarray}
\begin{proof}[Sketch of proof]

Equation \eqref{eq:H1-holomorphic-ASD} is a direct consequence of $\cH^0_{A_\infty}=0$.
  If $A$ is a HYM connection asymptotic to $A_\infty$ over an \acylcy\  then there exists a $\delta_0>0$ such that, for all $\delta\leq \delta_0$,
  \begin{equation}
    \label{eq:T-kerD}
    \cT_{\pi_V^*A,\delta} =\left\{ \ua \in \ker D_A : \ua\squigd 0 \right\}
  \end{equation}
  with $D_A$ as in \eqref{eq:D}.
  Furthermore, there exists $\delta_1>0$ such that, for all $\delta\leq\delta_1$, one has
  $\cH^0_{A,\delta}=0$ and
  \begin{equation*}
    \cT_{\pi_V^*A,\delta}\iso \cH^1_{A,\delta}
  \end{equation*}
  where $\cH^i_{A,\delta}:=\left\{ \alpha\in\cH^i_A : \alpha\squigd0 \right\}$.
\end{proof}  
\end{prop}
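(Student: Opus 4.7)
The plan is to reduce the statement to a weighted Hodge-theoretic comparison between analytic HYM deformations on $V$ and sheaf cohomology on the compactification $Z$. The identification~\eqref{eq:H1-holomorphic-ASD} drops out quickly; the real content lies in constructing $\kappa,\kappa_-$ and establishing commutativity.

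First, I would translate the top row into Dolbeault language. Applying Proposition~\ref{prop:L-dt-D} to the \acyl $\rG_2$--manifold $\mathbb{S}^1 \times V$ with the pulled-back connection $\pi_V^* A$, Fourier decomposition in the $\mathbb{S}^1$ factor shows that sub-exponential elements of $\ker L_{\pi_V^*A}$ are $\mathbb{S}^1$--invariant and hence reduce to elements of $\ker D_A$ on $V$. A weighted analogue of Proposition~\ref{prop:kerDA-Hi}, together with vanishing of $\cH^0_{A,\delta}$ for small $\delta$ (which follows from $H^0(\Sigma,\cEnd_0(\cE|_\Sigma))=0$ via the asymptotic limit map), then yields the key identification $\cT_{\pi_V^*A,\delta}\cong\cH^1_{A,\delta}$, where $\cH^1_{A,\delta}$ is the space of $\bar\partial_A$--harmonic $(0,1)$-forms with decay rate $\delta$. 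For \eqref{eq:H1-holomorphic-ASD}, I apply Proposition~\ref{prop:kerDA-Hi} to the compact Calabi--Yau $W_\infty=\mathbb{T}^2\times\Sigma$ and use the Künneth decomposition $\cH^1_{\pi_\Sigma^*A_\infty}\cong H^0(\mathbb{T}^2)\otimes H^1(\Sigma,\cEnd_0(\cE|_\Sigma))\oplus H^1(\mathbb{T}^2)\otimes H^0(\Sigma,\cEnd_0(\cE|_\Sigma))$; the hypothesis kills the second summand, and the Hitchin--Kobayashi isomorphism~\eqref{eq: isomorphism f} identifies the first with $H^1_{A_\infty}$.

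The main technical step is the comparison between weighted Dolbeault cohomology of $\cEnd_0(\cE|_V)$ on the \acylcy\ $V$ and the sheaf cohomology of the compactification $Z$. Using the spectral theory of asymptotically translation-invariant operators (Theorem~\ref{thm:mazya-plamenevskii} and Proposition~\ref{prop:non-critical-fredholm}), one picks $\delta_0>0$ so that $[-\delta_0,\delta_0]\cap\mathrm{spec}(D_{A_\infty})=\{0\}$, and applies weighted Hodge theory to establish
\begin{equation*}
  \cH^1_{A,0}\cong H^1(Z,\cEnd_0(\cE))
  \qandq
  \cH^1_{A,-\delta}\cong H^1(Z,\cEnd_0(\cE)(-\Sigma)),\qquad 0<\delta<\delta_0.
\end{equation*}
The twist by $\cO_Z(-\Sigma)$ in the exponentially-decaying case is the algebraic shadow of strict decay at the cylindrical end, and can be derived either from a Čech/Mayer--Vietoris argument comparing a tubular neighborhood of $\Sigma$ with $V$, or from Tyurin's Dolbeault-to-sheaf framework~\cite{Tyurin2012}. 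The maps $\kappa$ and $\kappa_-$ are defined as the composition of the identifications from Step~1 with these two Hodge-to-sheaf isomorphisms, and injectivity is built in.

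It remains to verify commutativity of~\eqref{eq:diagram}. The right square commutes because the map $\iota$ of Proposition~\ref{prop:iota} extracts the translation-invariant term in the asymptotic expansion~\eqref{eq:expansion}, which under the sheaf-theoretic identifications is precisely the restriction $\cEnd_0(\cE)\to\cEnd_0(\cE|_\Sigma)$ followed by the Hitchin--Kobayashi isomorphism~\eqref{eq: isomorphism f}; the left square commutes because the inclusion $\cT_{\pi_V^*A,-\delta}\subset\cT_{\pi_V^*A}$ corresponds at the sheaf level to the natural map induced by $\cEnd_0(\cE)(-\Sigma)\hookrightarrow\cEnd_0(\cE)$. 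I expect the hard part to be Step~3, and in particular the careful matching of the analytic weight interval $(-\delta_0,0)$ with the algebraic twist by $-\Sigma$. Controlling the asymptotic contribution of harmonic representatives across the critical weight $\delta=0$, and ruling out spectral obstructions of $D_{A_\infty}$ in the chosen range, is where the synthesis of the Maz'ya--Plamenevskii theory with the algebraic geometry of the building block is genuinely delicate.
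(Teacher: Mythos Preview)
Your proposal is correct and follows essentially the same route as the paper's sketch: reduce $\cT_{\pi_V^*A,\delta}$ to $S^1$--invariant elements of $\ker D_A$, identify these with weighted $\bar\partial_A$--harmonic $(0,1)$--forms $\cH^1_{A,\delta}$, and then pass to sheaf cohomology on the compactification $Z$. The paper's sketch records only the first two of these steps explicitly and leaves the analytic-to-algebraic comparison (your Step~3) to the cited reference; you have correctly identified that comparison, together with the matching of the weight interval to the twist by $-\Sigma$, as the substantive content.
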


\section{Transversal examples via the  Hartshorne-Serre correspondence}
\label{sec: transv gluing}

In \cite{Kovalev2003,Corti2013,Corti2015}, building blocks $Z$ are produced by blowing up
Fano or semi-Fano 3-folds along the base curve $\sC$ of an anticanonical pencil
(see Proposition \ref{FanoBlock}). By
understanding the deformation theory of pairs $(X,\kd)$ of semi-Fanos $X$
and anticanonical $K3$ divisors $\kd \subset X$, one can produce hundreds of thousands of pairs with the required matching (see \S \ref{subsec:match}).
In order to apply Theorem \ref{thm:itcs} to produce $\rm G_2$-instantons over the
resulting twisted connected sums, one first requires some supply of
asymptotically stable, inelastic vector bundles $\cE \to X$. Moreover, to satisfy
the hypotheses of compatibility and transversality, one would in general need
some understanding of the deformation theory of triples $(X, \kd, \cE)$.
In this Section I present a summary of our approach in \cite{Menet2017} to address this problem of production of ingredients, in the form of gluable pairs of holomorphic bundles over building blocks. 

The Hartshorne-Serre construction generalises the correspondence
between divisors and line bundles, under certain conditions, in the sense
that bundles of higher
rank are associated to subschemes of higher codimension. We recall
the rank $2$ version, as an instance of Arrondo's formulation\footnote{For a thorough justification of this choice of reference for the correspondence, see the Introduction section of Arrondo's notes. }:
\begin{theorem}[{\cite[Theorem
1]{Arrondo2007}}]\label{thm: Hartshorne-Serre}
Let $\sS\subset Z$ be a local complete intersection subscheme of codimension
2 in a smooth algebraic variety. If there exists a line bundle $\mathcal{L}\to
Z$ such that
\begin{itemize}
        \item
        $H^2(Z,\mathcal{L}^*)=0$,
        
        \item
        $\wedge^2\mathcal{N}_{\sS/Z}=\mathcal{L}|_\sS$, where $\mathcal{N}_{\sS/Z}$ denotes the normal bundle of $\sS$ in $Z$.
\end{itemize}
then there exists a rank $2$ holomorphic vector bundle $\cF\to Z$
such that
\begin{enumerate}
        \item
        $\wedge^2\cF=\mathcal{L}$,
        \item
        $\cF$ has one global section whose vanishing locus is $\sS$.
\end{enumerate}
We will refer to such  $\cF$ as the \emph{Hartshorne-Serre bundle obtained
from $\sS$ (and $\mathcal{L}$)}.
\end{theorem}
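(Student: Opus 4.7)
The plan is to realise $\cF$ as the middle term of a short exact sequence of $\cO_Z$-modules
$$
0 \to \cO_Z \to \cF \to \cI_\sS \otimes \cL \to 0,
$$
where $\cI_\sS$ denotes the ideal sheaf of $\sS$ in $Z$. If such a sequence exists with $\cF$ locally free of rank $2$, then the inclusion $\cO_Z \hookrightarrow \cF$ furnishes a global section of $\cF$ whose zero locus is precisely $\sS$, while taking determinants gives $\wedge^2 \cF \iso \cO_Z \otimes \cL \iso \cL$. Both conclusions therefore reduce to producing the extension and verifying local freeness.

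First I would analyse $\Ext^1_Z(\cI_\sS \otimes \cL, \cO_Z)$ via the local-to-global spectral sequence, whose low-degree exact sequence reads
$$
H^1(Z, \cL^*) \to \Ext^1(\cI_\sS \otimes \cL, \cO_Z) \to H^0\bigl(Z, \Extr^1(\cI_\sS \otimes \cL, \cO_Z)\bigr) \to H^2(Z, \cL^*),
$$
once I note that $\cI_\sS$ is reflexive of rank one, so $\mathscr{H}\!om(\cI_\sS \otimes \cL, \cO_Z) \iso \cL^*$. Since $\sS \subset Z$ is a local complete intersection of codimension $2$, the Koszul resolution of $\cI_\sS$ yields the standard isomorphism
$$
\Extr^1(\cI_\sS, \cO_Z) \iso \wedge^2 \cN_{\sS/Z}
$$
of sheaves supported on $\sS$, and the hypothesis $\wedge^2 \cN_{\sS/Z} \iso \cL|_\sS$ then gives
$$
\Extr^1(\cI_\sS \otimes \cL, \cO_Z) \iso \wedge^2 \cN_{\sS/Z} \otimes \cL^*|_\sS \iso \cO_\sS.
$$

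Next I take the canonical section $1 \in H^0(\sS, \cO_\sS) \iso H^0\bigl(Z, \Extr^1(\cI_\sS \otimes \cL, \cO_Z)\bigr)$. Its obstruction to lifting to a global Ext class lives in $H^2(Z, \cL^*)$, which vanishes by the first hypothesis, so I obtain a class $\xi \in \Ext^1(\cI_\sS \otimes \cL, \cO_Z)$ that generates the sheaf-Ext stalk at every point of $\sS$. I let $\cF$ be the middle term of the extension $\xi$ classifies.

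The remaining, and main technical, step is to verify that $\cF$ is locally free of rank $2$. Away from $\sS$ the sequence collapses to $0 \to \cO_Z \to \cF \to \cL \to 0$, so local freeness is immediate. At a point $x \in \sS$ I pick local equations $f_1, f_2$ for $\sS$; the Koszul extension
$$
0 \to \cO_{Z,x} \xrightarrow{(f_2,\, -f_1)^{\mathrm{t}}} \cO_{Z,x}^{\oplus 2} \xrightarrow{(f_1,\, f_2)} \cI_{\sS, x} \to 0
$$
is a local generator of $\Extr^1(\cI_\sS, \cO_Z)_x$. Since $\xi$ also restricts to a generator at $x$ by construction, the two extensions agree up to a unit, whence $\cF_x \iso \cO_{Z,x}^{\oplus 2}$. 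The main subtlety of the proof lies precisely in this matching of the global Ext class with the local Koszul generator, which is a standard but delicate Serre-construction argument.
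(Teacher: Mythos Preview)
The paper does not prove this theorem: it is quoted as \cite[Theorem~1]{Arrondo2007} and used as a black box, so there is no ``paper's own proof'' to compare against. Your argument is the classical Serre construction and is essentially correct; the local-to-global Ext analysis, the identification $\Extr^1(\cI_\sS,\cO_Z)\iso\wedge^2\cN_{\sS/Z}$ via the Koszul resolution, the lifting of $1\in H^0(\cO_\sS)$ using $H^2(Z,\cL^*)=0$, and the local-freeness check by matching with the local Koszul extension are exactly the standard steps (as in Arrondo's notes or Hartshorne's original paper).
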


Using the Hartshorne-Serre construction, we can systematically produce families of bundles over the
building blocks, which, in favourable cases, are parametrised by the building block's blow-up curve $\sC$ itself. This perspective lets us understand the deformation theory of the bundles very explicitly, and it also separates the latter from the deformation theory of the pair $(X,\kd)$.
We can therefore \emph{first} find matchings between two semi-Fano families using the
techniques from \cite{Corti2015}, and \emph{then} exploit the high degree of
freedom in the choice of the blow-up curve $\sC$ (see Lemma \ref{lem:wiggle})
to satisfy the compatibility and transversality hypotheses.

\subsection{A detailed example}

As a proof of concept, we will henceforth walk through the process of construction of examples, with the particular
pair adopted in  \cite{Menet2017}: 

\begin{example}
\label{ex:2conics}
The product $X_+=\P^1\times\P^2$ is a Fano 3-fold. 
Let $\left|\kd_{0},\kd_{\infty}\right|\subset \left|-K_{X_+}\right|$ be a generic
pencil with (smooth) base locus $\sC_+$ and
$\kd_+\in \left|\kd_{0},\kd_{\infty}\right|$ generic.
Denote by $r_+:Z_+\rightarrow X_+$  the blow-up of $X_+$ in $\sC_+$,  by 
 $\wt{\sC_+}$ 
the exceptional divisor and by $\ell_+$ a fibre
of $p_{1}:\wt{\sC_+}\rightarrow \mathscr{C}_+$. The proper transform of $\kd_+$ in $Z_+$ is also denoted by $\kd_+$, and
$(Z_+,S_+)$ is a building block by Proposition \ref{FanoBlock}.
For future reference, we fix classes 
$$
H_+:=r^{*}_+(\left[\P^1\times\P^1\right])
\qandq
G_+=r^{*}_+(\left[\left\{x\right\}\times\P^2\right]) \in H^2(Z_+).
$$

\noindent \textbf{NB.:} Clearly $-K_{X_+}$ is very ample, thus also  $-K_{X_+|\kd_+}$, so $X_+$ lends itself to application of Lemma \ref{lem:wiggle}. 
\end{example}

\begin{example}
\label{ex:2conics2}
A double cover $\pi:X_-\overset{2:1}{\longrightarrow}
\P^1\times\P^2$ branched over a smooth $(2,2)$ divisor $D$ is
a Fano 3-fold.
Let $\left|\kd_{0},\kd_{\infty}\right|\subset \left|-K_{X_-}\right|$ be a
generic pencil with (smooth) base locus $\sC_-$ and
$\kd_-\in \left|\kd_{0},\kd_{\infty}\right|$ generic.
Denote by $r_-:Z_-\rightarrow X_-$ the blow-up of $X_-$ in $\sC_-$, and by  $\wt{\sC_-}$ the exceptional divisor.
The proper transform of $\kd_-$ in $Z_-$ is also denoted by $\kd_-$, and
$(Z_-,S_-)$ is a building block by Proposition \ref{FanoBlock}.
For future reference, we fix classes
\[
H_-:=(r_-\circ \pi)^{*}(\left[\P^1\times\P^1\right])
\qandq
G_-=(r_-\circ \pi)^*(\left[\left\{x\right\}\times\P^2\right])
\in H^2(Z_-) 
\]
 and
\[
h_-:=\frac{1}{2}(r_-\circ \pi)^*(\left[\left\{x\right\}\times\P^1\right])
\in H^4(Z_-),
\]
 where $x$ is a point. 
%
 \end{example}

In that context, the existence of solutions satisfying the hypotheses of the TCS $\rG_2$-instanton gluing theorem takes the following form:

\begin{theorem}[{\cite[Theorem 1.3]{Menet2017}}]
\label{thm:main}
There exists a matching pair of building blocks $(Z_\pm, \kd_\pm)$,
obtained as $Z_\pm=\Bl_{\sC_\pm}X_\pm$ for  $X_+=\P^1\times\P^2$ and the double cover $X_-\overset{2:1}{\longrightarrow}\P^1\times\P^2$ branched over
 a $(2,2)$ divisor, with rank $2$ holomorphic bundles $\cE_\pm \to Z_\pm$ satisfying the
hypotheses of Theorem \ref{thm:itcs}.
\end{theorem}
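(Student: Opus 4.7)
The plan is to construct all data explicitly, via the Hartshorne--Serre correspondence (Theorem \ref{thm: Hartshorne-Serre}), and then verify the four hypotheses of Theorem \ref{thm:itcs} in turn. First I would fix a matching of the building blocks $(Z_\pm,\kd_\pm)$ from Examples \ref{ex:2conics} and \ref{ex:2conics2}: since both $X_\pm$ are semi-Fano with very ample anticanonical class, Proposition \ref{FanoBlock} gives genuine building blocks, and a matching $\fr\co\kd_+\to\kd_-$ of the induced polarised $K3$ pairs $(\kd_\pm,\kclass_\pm|_{\kd_\pm})$ can be produced by the standard lattice-matching technology of \cite{Corti2015} applied to the images $N_\pm=\im(H^2(Z_\pm)\to H^2(\kd_\pm))$ spanned by $H_\pm,G_\pm$ (and, on the minus side, $h_-$). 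This fixes $(Z_\pm,\kd_\pm,\kclass_\pm,\fr)$ independently of any bundle data.

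Next, on each $Z_\pm$ I would choose a codimension $2$ local complete intersection $\sS_\pm \subset Z_\pm$ (a curve) and a line bundle $\mathcal{L}_\pm\to Z_\pm$ such that $\wedge^2 \cN_{\sS_\pm/Z_\pm}=\mathcal{L}_\pm|_{\sS_\pm}$ and $H^2(Z_\pm,\mathcal{L}_\pm^{*})=0$ (the latter to be arranged by Kodaira vanishing after twisting by a suitable positive combination of $H_\pm, G_\pm$). Theorem \ref{thm: Hartshorne-Serre} then produces rank $2$ bundles $\cE_\pm\to Z_\pm$ sitting in
\begin{equation*}
0\to \mathcal{O}_{Z_\pm}\to \cE_\pm \to \mathcal{I}_{\sS_\pm/Z_\pm}\otimes\mathcal{L}_\pm\to 0.
\end{equation*}
Restricting to $\kd_\pm$ and using the corresponding sequence for $\sS_\pm\cap \kd_\pm\subset \kd_\pm$, the $\mu$-stability of $\cE_\pm|_{\kd_\pm}$ with respect to $\kclass_\pm|_{\kd_\pm}$ reduces to a numerical inequality on the determinant $\mathcal{L}_\pm|_{\kd_\pm}$, which is checked by direct intersection computation on the $K3$ lattice $N_\pm$. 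The inelasticity condition \eqref{eq:no-deformations} I would attack by twisting the Hartshorne--Serre sequence by $\cEnd_0(\cE_\pm)(-\kd_\pm)$: a long exact sequence and Leray for the blow-up $r_\pm\co Z_\pm\to X_\pm$ reduce the required vanishing to Bott-type vanishing on $\P^1\times \P^2$ and a cohomology statement on the double cover, which become accessible once the Chern classes of $\cE_\pm$ are pinned down. Compatibility of the two ASD instantons $A_{\infty,\pm}$ under $\fr$ will hold provided $\fr$ can be enhanced to $\bar\fr$: here I would arrange that $\sS_+\cap \kd_+$ and $\fr^{-1}(\sS_-\cap \kd_-)$ represent the same Mukai vector on $\kd_+$ and that $H^0(\kd_\pm,\cEnd_0(\cE_\pm|_{\kd_\pm}))=0$, so that Huybrechts--Nieper-Wi{\ss}kirchen (invoked in the Remark after Theorem \ref{thm:itcs}) produces the required lift.

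The main obstacle is the transversality \eqref{eq:trivial-intersection}. By the Serre-duality discussion following Theorem \ref{thm:itcs}, once inelasticity holds the images $\im(\lambda_\pm)\subset H^1_{A_{\infty,+}}$ are complex Lagrangians for the symplectic form induced by $\Omega_+$, and what is required is transverse intersection as real Lagrangians with respect to $\omega_{K,+}$, at the point $[\cE_+|_{\kd_+}]\in\sM_{\kd_+}$. The plan is to exploit the two independent sources of moduli in our Hartshorne--Serre construction: the choice of the curve $\sS_\pm$ inside its linear system (which deforms $\cE_\pm$ while keeping the building block fixed), and the choice of anticanonical pencil base curve $\sC_\pm$, which can be wiggled via Lemma \ref{lem:wiggle}. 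I would compute the differentials of these deformation maps at our chosen point and show that the resulting tangent subspaces sweep out a large enough family in the Lagrangian Grassmannian of $(H^1_{A_{\infty,+}},\omega_{K,+})$ that a generic matching pair $(\cE_+,\cE_-)$ intersects transversally. This step is the crux: it requires an explicit infinitesimal identification of $\im(\lambda_\pm)$ with subspaces of $H^1(\kd_+,\cEnd_0(\cE_+|_{\kd_+}))$ in terms of normal-bundle data of $\sS_\pm$ in $Z_\pm$, and a dimension count showing that the non-transverse locus is of positive codimension in the product of deformation spaces. Once this is established, a generic choice of $(\sS_+,\sS_-,\sC_+,\sC_-)$ within the matched configuration provides the required example and Theorem \ref{thm:itcs} applies.
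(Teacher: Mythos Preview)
Your overall architecture---Hartshorne--Serre bundles on both sides, then verify asymptotic stability, inelasticity, compatibility, transversality---matches the paper's. The specific choices of $\sS_\pm$ and $\mathcal{L}_\pm$ (exceptional fibre $\ell$ on $Z_+$, a line of class $h_-$ on $Z_-$, with the explicit line bundles of Propositions~\ref{prop:exa} and~\ref{prop:exa2}) are not incidental, however: they are dictated by the numerical constraints of Summary~\ref{generaltec}, and the cohomological vanishings you need for inelasticity and stability are genuinely ad~hoc computations on those particular curves, not consequences of Kodaira vanishing for ``suitable twists''. You should not expect a generic $\sS_\pm$ to work.

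The real gap is in your transversality argument. You propose a genericity/dimension-count scheme: vary $\sS_\pm$ and $\sC_\pm$, compute the induced variation of $\im(\lambda_\pm)$ inside a Lagrangian Grassmannian, and argue that the non-transverse locus has positive codimension. But you have no mechanism for actually computing how $\im(\lambda_+)$ moves, and without one the dimension count is wishful. The paper's approach is sharper and asymmetric. The key observation (Proposition~\ref{iso}) is that the moduli space $\sM^s_{\kd_+,\mathcal{A}_+}(v_{\kd_+})$ is \emph{itself} isomorphic to $\kd_+$, via an explicit Hartshorne--Serre map $g\co p\mapsto \mathcal{G}_p$. Under this isomorphism the family $\{(\cF_+)_p : p\in\sC_+\}$ of bundles on $Z_+$ restricts to $\kd_+$ as the curve $\sC_+\subset\kd_+$ itself, so the image of $\lambda_+$ at $p$ is precisely $\rd g_p(T_p\sC_+)$. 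Transversality is thereby converted into an incidence condition on the blow-up curve: given any target point and tangent line in $\kd_+$, one needs $\sC_+$ to pass through that point with that tangent. This is exactly what Lemma~\ref{lem:wiggle} supplies, since $-K_{X_+}|_{\kd_+}$ is very ample. Accordingly the paper does \emph{not} fix $\sC_+$ (hence $Z_+$) before the bundle data: it first fixes $\cE_-\to Z_-$, reads off $\mathcal{G}=\bar\fr^*(\cE_{-|\kd_-})$ and the $1$-dimensional subspace $\bar\fr^*(\im\res_-)$, chooses a complementary line $V$, and only then picks $\sC_+$ (and the exceptional fibre $\ell$) to realise $(\mathcal{G},V)$ via Theorem~\ref{thm:summaryintro}. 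Compatibility then holds by construction, not by a Mukai-vector/rigidity argument.
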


Here's a sketch of the procedure leading to Theorem \ref{thm:main}:

\begin{itemize}
\item
We construct holomorphic bundles on building blocks from certain complete intersection subschemes, via the Hartshorne-Serre correspondence (Theorem \ref{thm: Hartshorne-Serre}), as
well as two families 
of bundles $\{\cF_{\pm}\to X_\pm \} $, over the particular blocks of Theorem \ref{thm:main}, that are conducive to application of
Theorem \ref{thm:itcs}.

\item
Then, in \S \ref{sec moduli assoc to F|S}, we focus on the moduli space
$\sM_{\kd_+,\mathcal{A}_+}^{s}(v_{\kd_+})$  of stable bundles on $\kd_+$, where the problems of compatibility and transversality therefore ``take place''.
Here $X_+=\P^1\times\P^2$, 
$\kd_+ \subset X_+$ is the anti-canonical $K3$ divisor and, for a smooth curve $\sC_+ \in |{-}K_{X_+|\kd_+}|$, the block
$Z_+ := \textrm{Bl}_{\sC_+} X_+$ is in the family obtained from Example \ref{ex:2conics}.

It can be shown  that $\sM_{\kd_+,\mathcal{A}_+}^{s}(v_{\kd_+})$ is isomorphic to $\kd_+$ itself, and that the restrictions of the family of bundles $\cF_+$
correspond precisely to the blow-up curve $\sC$.
Now, given a rank $2$ bundle $\cF_+\to Z_+$ such that $\mathcal{G}:= \cF_{+|\kd_{+}} \in \sM^{s}_{\kd_{+},\mathcal{A}_{+}}(v_{\kd_+})$,
 the restriction map
\begin{equation}
\label{eq:resintro}
\res : H^1\left(Z_+, \Endr_0(\cF_+)\right) \to H^1(\kd_+, \Endr_0(\mathcal{G}))
\end{equation}
corresponds to the derivative at $\cF_+$ of the map between instanton moduli
spaces. Combining with Lemma \ref{lem:wiggle}, which guarantees the freedom
to choose $\sC_+$ when constructing the block $Z_+$ from $\kd_+$, one has:

\begin{theorem}[{\cite[Theorem 1.4]{Menet2017}}]
\label{thm:summaryintro}
For every
$\mathcal{G} \in  \sM_{\kd_+,\mathcal{A}_+}^{s}(v_{\kd_+})$ and
every line $V \subset H^1(\kd_+, \Endr_0(\mathcal{G}))$, there is
a smooth base locus curve $\sC \in |{-}K_{X_+|\kd_+}|$ and an exceptional fibre $\ell\subset\widetilde{\sC}$ corresponding by Hartshorne-Serre to an  inelastic vector bundle $\cF_+ \to Z_+$, such that $\cF_{+|\kd_+} = \mathcal{G}$ and the  restriction map
\eqref{eq:resintro} has image $V$.
\end{theorem}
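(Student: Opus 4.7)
The plan is to translate the given data $(\mathcal{G},V)$ through the moduli-theoretic identifications that make the restriction map \eqref{eq:resintro} geometrically transparent, and then to realise them via the Hartshorne--Serre construction applied to a well-chosen exceptional fibre. Under the isomorphism $\sM_{\kd_+,\mathcal{A}_+}^{s}(v_{\kd_+}) \cong \kd_+$ recorded in \S \ref{sec moduli assoc to F|S}, the stable bundle $\mathcal{G}$ corresponds to a distinguished point $p\in \kd_+$ and its tangent space $T_p \kd_+$ is identified with $H^{1}(\kd_+,\Endr_0(\mathcal{G}))$, so the prescribed line $V$ becomes a prescribed tangent direction to $\kd_+$ at $p$.

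With $p$ and $V$ thus interpreted, I would invoke Lemma \ref{lem:wiggle}, exploiting the very-ampleness of $-K_{X_+|\kd_+}$ noted after Example \ref{ex:2conics}, to produce a smooth anticanonical curve $\sC \in |{-}K_{X_+|\kd_+}|$ whose tangent at $p$ coincides with $V$. The block $Z_+ := \Bl_\sC X_+$ is then obtained by Proposition \ref{FanoBlock}, with exceptional divisor $\widetilde{\sC}\to \sC$; I pick $\ell\subset \widetilde{\sC}$ to be the ruling fibre over $p$. Applying the rank-$2$ Hartshorne--Serre correspondence (Theorem \ref{thm: Hartshorne-Serre}) to the codimension-$2$ subscheme $\sS=\ell\subset Z_+$, with a line bundle $\mathcal{L}$ whose determinantal and $H^2$-vanishing hypotheses are verified by standard cohomology computations on the blow-up of $X_+=\P^1\times \P^2$, yields a rank-$2$ holomorphic bundle $\cF_+\to Z_+$ admitting a section that vanishes exactly along $\ell$.

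To verify the three conclusions: as $\ell$ varies in the ruling of $\widetilde{\sC}$, the Hartshorne--Serre construction produces a family of bundles on $Z_+$ whose restriction to $\kd_+$ sweeps out, inside $\sM_{\kd_+,\mathcal{A}_+}^{s}(v_{\kd_+})\cong \kd_+$, precisely the curve $\sC$; since our $\ell$ sits over $p$, the restriction $\cF_{+|\kd_+}$ corresponds to $p$, i.e.\ equals $\mathcal{G}$. The map \eqref{eq:resintro} is the differential at $[\cF_+]$ of the moduli-level restriction $\sM_{Z_+}\to \sM_{\kd_+}$, so its image is the tangent line at $p$ to the embedded curve $\sC\subset \kd_+$, which by construction is $V$. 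Inelasticity of $\cF_+$, i.e.\ $H^{1}(Z_+,\Endr_0(\cF_+)(-\kd_+))=0$, would be extracted from a cohomology vanishing on $Z_+$ obtained by twisting the defining Hartshorne--Serre short exact sequence of $\cF_+$ by $-\kd_+$ and combining Kodaira-type vanishing on $X_+$ with control of the contribution from the exceptional divisor.

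The main obstacle I anticipate is reconciling the several open-genericity conditions -- smoothness of $\sC$, the Hartshorne--Serre hypotheses on $\mathcal{L}$, and inelasticity of $\cF_+$ -- with the rigid incidence constraint of prescribing both the point $p\in \kd_+$ and the tangent direction $V\in T_p\kd_+$. Lemma \ref{lem:wiggle} is precisely designed to supply enough freedom in $|{-}K_{X_+|\kd_+}|$ to match such a $1$-jet, but verifying that the remaining freedom is still sufficient to land in the locus where $\cF_+$ is inelastic requires a careful dimension count within the linear system together with a perturbation argument, and is where I expect the delicate work to lie.
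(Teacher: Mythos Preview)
Your approach is essentially the same as the paper's: translate $(\mathcal G,V)$ via the isomorphism $g\co\kd_+\to\sM^s_{\kd_+,\mathcal A_+}(v_{\kd_+})$ of Proposition~\ref{iso} into a point $p\in\kd_+$ and a tangent line $V'=(\rd g)_p^{-1}(V)\subset T_p\kd_+$, invoke Lemma~\ref{lem:wiggle} to choose a smooth $\sC$ with $T_p\sC=V'$, and take $\cF_+$ to be the Hartshorne--Serre bundle attached to the exceptional fibre $\ell$ over $p$.

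Your final paragraph overstates the difficulty. The tension you anticipate between the incidence constraint and the ``genericity'' needed for inelasticity does not arise: Proposition~\ref{prop:exa} asserts that the Hartshorne--Serre bundle $\cF_+$ is asymptotically stable and inelastic for \emph{every} smooth base locus $\sC$ and \emph{every} exceptional fibre $\ell$, not merely for generic choices. Likewise, Lemma~\ref{lem:wiggle} already delivers smoothness of $\sC$ together with the prescribed $1$-jet. So once those two inputs are in hand there is no further dimension count or perturbation argument to perform; the delicate work you foresee has already been absorbed into the proof of Proposition~\ref{prop:exa} (equivalently, into the verification of conditions~(1)--(6) of Summary~\ref{generaltec} and Proposition~\ref{PInelastic2}).
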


\item
Let $\hkr : \kd_+ \to \kd_-$ be a matching between $X_+=\P^1\times\P^2$ and $X_-\overset{2:1}{\longrightarrow}\P^1\times\P^2$.
Then for any $\cF_- \to Z_-$ as above we can (up to a twist by holomorphic line
bundles $\mathcal{R}_\pm\to Z_\pm$) choose the smooth curve
$\sC_+ \in |{-}K_{X_+ | \kd_+}|$ in the construction of $Z_+$ so that there
is a Hartshorne-Serre bundle $\cF_+\to Z_+$ that matches $\cF_-$ transversely.
Then the bundles $\cE_\pm:=\cF_\pm\otimes \mathcal{R}_\pm  $ satisfy
all the gluing hypotheses of Theorem \ref{thm:itcs}.
\end{itemize}

\subsection{Building blocks from semi-Fano 3-folds and twisted connected sums}
\label{sec:Building-Fano}

For all but 2 of the 105 families of Fano $3$-folds, the base locus of a generic
anti-canonical pencil is smooth. This also holds for most families in the
wider class of `semi-Fano $3$-folds' in the terminology of \cite{Corti2013},
\ie smooth projective $3$-folds where $-K_X$ defines a morphism that does not
contract any divisors. 
We can then obtain building blocks using \cite[Proposition 3.15]{Corti2015}:

\begin{prop}\label{FanoBlock}
Let $X$ be a semi-Fano 3-fold with $H^{3}(X,\Z)$ torsion-free, $|\kd_{0},\kd_{\infty}|\subset |-K_{X}|$ a generic pencil with (smooth) base locus $\sC$, $\kd\in |\kd_{0},\kd_{\infty}|$ generic, and $Z$ the blow-up of $X$ at $\mathscr{C}$. Then $\kd$ is a smooth $K3$ surface, its proper transform in $Z$ is isomorphic to $\kd$, and $(Z,\kd)$ is a building block. Furthermore
\begin{enumerate}
\item
the image $N$ of $H^{2}(Z,\Z)\rightarrow H^{2}(\kd,\Z)$ equals that of $H^{2}(X,\Z)\rightarrow H^{2}(\kd,\Z)$;

 \item
 $H^{2}(X,\Z)\rightarrow H^{2}(\kd,\Z)$ is injective and the image $N$ is primitive in $H^{2}(\kd,\Z)$.
\end{enumerate}
\end{prop}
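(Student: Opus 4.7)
The plan is to follow the standard building-block recipe, checking each clause of Definition~\ref{def:building-block} in turn while keeping track of the difference between the Fano and the merely semi-Fano situation.

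First I would set up the birational geometry. A generic pencil $|\kd_0,\kd_\infty|\subset|-K_X|$ has smooth base locus $\sC$ by Bertini (since $-K_X$ is base-point-free, as follows from semi-Fano hypotheses and Kawamata--Viehweg), and after blowing up $\sC$ the indeterminacy of the rational map $X\dashrightarrow \P^1$ is resolved, producing a morphism $\zeta\co Z\to\P^1$ whose generic fibre is the proper transform of a member of the pencil. Since $\kd$ is a smooth Cartier divisor in $X$ containing $\sC$ transversely, its strict transform is isomorphic to $\kd$ and meets the exceptional divisor $E$ in a section of $E\to \sC$. That a generic $\kd\in|-K_X|$ is a $K3$ surface follows from adjunction ($K_\kd=(K_X+\kd)|_\kd=0$), Bertini for smoothness, and $H^1(\kd,\sO_\kd)=0$ (deduced from Kawamata--Viehweg vanishing $H^i(X,\sO_X(-\kd))=0$ for $i<3$ combined with the exact sequence $0\to\sO_X(K_X)\to\sO_X\to\sO_\kd\to0$). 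Because $\zeta^{-1}(\infty)=\kd$ is a fibre, its normal bundle in $Z$ is trivial, so $\kd\sim-K_Z$.

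Next I would verify the remaining structural items in Definition~\ref{def:building-block}. The canonical blow-up formula gives $-K_Z=r^*(-K_X)-E$, and its primitivity is inherited from the primitivity of $-K_X$ together with the fact that $H^2(Z,\Z)=r^*H^2(X,\Z)\oplus\Z[E]$. For torsion-freeness of $H^3(Z,\Z)$ and $H^4(Z,\Z)$, I would apply the blow-up formula in cohomology,
\[
  H^k(Z,\Z)\iso H^k(X,\Z)\oplus H^{k-2}(\sC,\Z),
\]
using $H^3(X,\Z)$ torsion-free by hypothesis, $H^4(X,\Z)\iso H_2(X,\Z)$ torsion-free because $X$ is simply connected with $H^2(X,\Z)$ free (semi-Fanos are simply connected and $H^2$ is generated by divisor classes), $H^1(\sC,\Z)=\Z^{2g}$, and $H^2(\sC,\Z)=\Z$ for the smooth connected curve $\sC$.

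For assertion~(1) I would compute the restriction $E|_\kd$ in the proper transform. Since $\kd$ meets $E$ along a curve isomorphic via $r$ to $\sC$, one has $E|_\kd=[\sC]_\kd$, and because $\sC=\kd_0\cap\kd_\infty$ is cut out by another anticanonical divisor we get $[\sC]_\kd=(-K_X)|_\kd=r^*(-K_X)|_\kd$. Hence the class of $E$ restricts into the same subgroup of $H^2(\kd,\Z)$ as $r^*H^2(X,\Z)$, so both images coincide. For~(2), I would invoke the Lefschetz hyperplane theorem for the smooth divisor $\kd\subset X$: this directly gives injectivity and primitivity of $H^2(X,\Z)\to H^2(\kd,\Z)$ in the Fano case.

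The main obstacle lies precisely in the last step when $X$ is only semi-Fano, since $-K_X$ need not be ample and the classical Lefschetz theorem does not apply verbatim. To handle this I would pass to the anticanonical model $X\to X'$, which by definition of semi-Fano does not contract any divisor, apply Lefschetz on the (possibly singular) ample model using the Goresky--MacPherson version for the intersection with the smooth $K3$ divisor $\kd$, and then pull back the resulting injectivity/primitivity statement through the small birational contraction, which preserves $H^2$ up to divisor classes contracted to curves and thus does not affect the image in $H^2(\kd,\Z)$. This is the delicate point that makes the proposition genuinely stronger than its Fano analogue and that is treated in detail in \cite{Corti2015}.
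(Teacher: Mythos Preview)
The paper does not actually prove this proposition: it is quoted verbatim from \cite[Proposition~3.15]{Corti2015} and stated without proof. Your sketch is a faithful outline of the argument one finds there, and you correctly identify that the only substantive difficulty beyond the Fano case is the Lefschetz-type statement~(2) when $-K_X$ is merely big and nef, which is exactly what \cite{Corti2015} handles.

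One small remark on your primitivity step: you do not need $-K_X$ to be primitive in $H^2(X,\Z)$. Since $H^2(Z,\Z)\cong r^*H^2(X,\Z)\oplus\Z[E]$ and $-K_Z=r^*(-K_X)-E$, the $E$-coefficient is $-1$, so $-K_Z$ is automatically primitive regardless of $-K_X$. Otherwise your outline is sound and matches the intended argument.
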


Let us notice for later use that, whenever $-K_X|_\kd$ is very ample, it is possible to `wiggle' a blow-up curve $\sC$ so as to realise any prescribed incidence condition $(x,V)\in T\kd$. 
This fact will play an  important role in the transversality argument
in \S \ref{sec moduli assoc to F|S}.

\begin{lemma}[{\cite[Lemma 2.5]{Menet2017}}]
\label{lem:wiggle}
Let $X$ be a semi-Fano, $\kd \in |{-}K_X|$ a smooth $K3$ divisor, and suppose
that the restriction of $-K_X$ to $\kd$ is very ample.
Then given any point $x \in \kd$ and any
(complex) line $V \subset T_x \kd$, there exists an anticanonical pencil
containing $\kd$ whose base locus $\sC$ is smooth, contains $x$, and
$T_x \sC = V$.
\end{lemma}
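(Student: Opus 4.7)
The plan is to reduce to the $K3$ surface and invoke very ampleness together with Bertini. An anticanonical pencil containing $\kd$ is determined by a second divisor $\kd_\infty\in|{-}K_X|$, and the corresponding base curve is $\sC=\kd\cap\kd_\infty$, viewed as an element of the linear system $|\cL|$ on $\kd$ with $\cL:=-K_X|_\kd$. From the exact sequence
\[
0\to \cO_X \to \cO_X(-K_X)\to \cO_\kd(\cL)\to 0
\]
(using $\kd\sim -K_X$) together with the vanishing $H^1(\cO_X)=0$ (Kawamata--Viehweg on the semi-Fano $X$), every element of $|\cL|$ lifts to a $\kd_\infty\in|{-}K_X|$; hence it suffices to produce a smooth $\sC\in|\cL|$ on $\kd$ with $x\in\sC$ and $T_x\sC=V$.

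Let $Z\subset\kd$ be the curvilinear length-$2$ subscheme supported at $x$ with tangent line $V$, and set $L:=|\cL\otimes\cI_Z|$. Very ampleness of $\cL$ means the $2$-jet map $H^0(\cL)\twoheadrightarrow\cL|_Z$ is surjective, so $\dim L=\dim|\cL|-2$; it also gives surjectivity of the derivative map
\[
H^0(\cL\otimes\cI_x)\twoheadrightarrow T^*_x\kd\otimes \cL|_x.
\]
Consequently, the subset of $s\in L$ with $\mathrm{d}s_x=0$ is a proper linear subspace, so a generic $s\in L$ defines a curve smooth at $x$ with tangent exactly $V$.

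For global smoothness I would apply Bertini's theorem (in characteristic zero): the generic member of $L$ is smooth away from the base locus $B:=\mathrm{Bs}(L)$. Under the embedding $\iota\co\kd\hookrightarrow\P^N$ induced by $|\cL|$, sections of $L$ correspond to hyperplanes of $\P^N$ containing the projective tangent line $\widetilde V\subset\P^N$ of $\iota(\kd)$ at $\iota(x)$ in direction $V$; therefore $B=\widetilde V\cap\iota(\kd)$, which is finite as long as $\widetilde V\not\subset \iota(\kd)$. For each extra point $y\in B\setminus\{x\}$, the condition that a hyperplane $H\supset\widetilde V$ be tangent to $\iota(\kd)$ at $y$ is a proper linear condition on the $(N-2)$-parameter family $\{H\supset\widetilde V\}$, so the generic such $H$ simultaneously avoids finitely many bad loci and yields a globally smooth $\sC$. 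The main obstacle will be the base-locus analysis, namely excluding the pathological case $\widetilde V\subset\iota(\kd)$ (which would mean $\kd$ contains a rational curve tangent to $V$ at $x$) and ensuring no accumulation of trisecant points makes $B$ positive-dimensional; very ampleness of $\cL$ together with the smoothness of the $K3$ surface $\kd$ should constrain these pathologies enough for the argument to go through.
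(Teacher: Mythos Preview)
Your strategy---reduce to the linear system $|\cL|$ on $\Sigma$ via $H^1(\cO_X)=0$, impose the $1$-jet at $x$ using very ampleness, then invoke Bertini away from the base locus and check the finitely many extra base points by hand---is the natural one and the steps you actually carry out are correct. The difficulty is the residual case you yourself flag: it is a genuine obstruction, and very ampleness does \emph{not} exclude it. A smooth quartic $\Sigma\subset\P^3$ can contain a line $\ell$; taking $x\in\ell$ and $V=T_x\ell$ gives $\tilde V=\ell\subset\Sigma$, so every hyperplane $H\supset\tilde V$ meets $\Sigma$ in a curve $\ell+C'$ with $\ell\cdot C'=\cL\cdot\ell-\ell^2=1-(-2)=3>0$, hence reducible and singular somewhere on $\ell$. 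In that situation there is simply \emph{no} smooth $\sC\in|\cL|$ with $T_x\sC=V$, so the assertion as literally stated fails for that particular $(x,V)$. Your closing sentence that ``very ampleness together with smoothness should constrain these pathologies'' is therefore not justified.

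What rescues the applications is either of two observations. First, lines on a polarised $K3$ are rigid and finite in number, so for each $x$ only finitely many directions $V$ are excluded; since the use of the lemma in \S\ref{sec moduli assoc to F|S} only needs \emph{some} $V$ complementary to a fixed $1$-dimensional subspace of $H^1(\Sigma_+,\cEnd_0(\mathcal G))$, this weaker conclusion already suffices. Second, for the specific semi-Fanos in the paper one can check directly that the polarising lattice $N$ contains no class $\ell$ with $\ell^2=-2$ and $(-K_X|_\Sigma)\cdot\ell=1$: for instance when $X_+=\P^1\times\P^2$ the degree $(-K_X|_\Sigma)\cdot\ell$ is always a multiple of $3$. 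Hence a generic anticanonical $\Sigma$ contains no lines at all, and your argument goes through unchanged. You should make one of these two fixes explicit rather than leaving the edge case open.
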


Finally, note that if $X_\pm$ is a pair of semi-Fanos and $\hkr : \kd_+ \to \kd_-$
is a matching in the sense of Definition \ref{def:match}, then $\hkr$ also
defines a matching of building blocks constructed from $X_\pm$ using 
Proposition \ref{FanoBlock}. Thus given a pair of matching semi-Fanos we can
apply Theorem \ref{thm:tcs} to construct closed \gtmfd s, \emph{but} this
still involves choosing the blow-up curves $\sC_\pm$.

\subsection{The matching problem}
\label{subsec:match}

We now explain in more detail the argument of \cite[\S 6]{Corti2015} for finding matching building blocks
$(Z_\pm, \kd_\pm)$.
The blocks will be obtained by applying Proposition \ref{FanoBlock} to a pair
of semi-Fanos $X_\pm$, from some given pair of deformation types $\cX_\pm$.

A key deformation invariant of a semi-Fano $X$ is its Picard lattice
$\Pic(X) \cong H^2(X; \Z)$. For any anticanonical $K3$ divisor
$\kd \subset X$, the injection $\Pic(X) \into H^2(\kd;\Z)$ is primitive.
The intersection form on $H^2(\kd;\Z)$ of any $K3$ surface is isometric to
$L_{K3} := 3\rU \oplus 2\rE_8$, the unique even unimodular lattice of
signature $(3,19)$.
We can therefore identify $\Pic(X)$ with a primitive sublattice
$N \subset L_{K3}$ of the $K3$ lattice, uniquely up to the action of the isometry
group $O(L_{K3})$ (this is usually uniquely determined  by the isometry
class of $N$ as an abstract lattice).

Given a matching $\hkr \colon \kd_+ \to \kd_-$ between anticanonical
divisors in a pair of semi-Fanos, we can choose the isomorphisms $H^2(\kd_\pm;\Z) \cong L_{K3}$ compatible with $\hkr^*$, 
hence identify $\Pic(X_+)$ and $\Pic(X_-)$
with a \emph{pair} of primitive sublattices $N_+, N_- \subset L_{K3}$.
While the $O(L_{K3})$ class of $N_\pm$ individually depends only on $X_\pm$,
the $O(L_{K3})$ class of the pair $(N_+, N_-)$ depends on $\hkr$, and we
call $(N_+, N_-)$ the \emph{configuration} of $\hkr$.
Many important properties of the resulting twisted connected sum only depend on the
\hk rotation in terms of  the configuration.  

 Given a configuration $N_+, N_- \subset L_{K3}$, let
\[
N_0 := N_+ \cap N_- , 
\quad\text{and}\quad
\nres_\pm := N_\pm \cap N_\mp^\perp .
\]
We say that the configuration is \emph{orthogonal} if $N_\pm$ are rationally
spanned by $N_0$ and $\nres_\pm$
(geometrically, this means that the reflections in $N_+$ and $N_-$ commute).
Given a pair $\cX_\pm$ of
deformation types of semi-Fanos, then there are sufficient conditions for a given orthogonal configuration
to be realised by some matching \cite[Proposition 6.17]{Corti2015},  
\begin{prop}
\label{6.18}
i.e., so that there exist $X_{\pm}\in \mathcal{X}_{\pm}$,
$\,\kd_{\pm}\in |{-}K_{X_{\pm}}|$, and a matching
$\hkr:\kd_{+}\rightarrow \kd_{-}$ with the given configuration.
\end{prop}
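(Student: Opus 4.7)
The strategy is to reduce the existence of a matching to two essentially independent problems: (i) constructing a single differentiable $K3$ surface with a Hodge structure compatible with both $N_\pm$-polarizations in the sense dictated by a hyperk\"ahler rotation, and (ii) realizing this $K3$ simultaneously as an anticanonical divisor in some $X_+\in\cX_+$ and some $X_-\in\cX_-$. Step (i) is pure linear algebra in $L_{K3}$ together with the surjectivity of the $K3$ period map; step (ii) is where the ``sufficient conditions'' really enter, through a deformation-theoretic dominance statement.

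For step (i), I would unfold the matching condition $\hkr^*\kclass_-\in H^{2,0}+H^{0,2}$ of Definition \ref{def:matching-data} in terms of a hyperk\"ahler triple $(\omega_{I,+},\omega_{J,+},\omega_{K,+})$ on $\kd_+=:\kd$, so that $\hkr$ becomes the identity of $\kd$ after suitable markings and the complex structure of $\kd_-$ is obtained from that of $\kd_+$ by the $90^{\circ}$ rotation $(I,J,K)\mapsto (J,I,-K)$. Expressing the K\"ahler and holomorphic classes in $L_{K3}\otimes \R$, the orthogonality of the configuration translates into asking for positive vectors $\omega_I\in \nres_+\otimes \R$, $\omega_J\in \nres_-\otimes \R$ and $\omega_K\in (N_++N_-)^{\perp}\otimes\R$, mutually orthogonal and of the same squared norm. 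This is a pure signature count in the $(3,19)$-lattice $L_{K3}$: the existence of such a positive $3$-plane is guaranteed once each $\nres_\pm$ contains a vector of positive square and the transcendental lattice $(N_++N_-)^\perp$ has signature $(1,\ast)$, which is precisely what the sufficient conditions on ranks and discriminants provide. Surjectivity of the $K3$ period map then yields an actual $K3$ surface $\kd$ carrying this hyperk\"ahler structure.

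For step (ii), I would appeal, for each family $\cX_\pm$ separately, to a period-map dominance statement: the moduli of pairs $\{(X_\pm,\kd_\pm):X_\pm\in\cX_\pm,\ \kd_\pm\in |{-}K_{X_\pm}|\text{ smooth}\}$ surjects onto a non-empty open subset $U_\pm$ of the moduli $\cM_{N_\pm}$ of $N_\pm$-polarized $K3$ surfaces. This rests on a Kodaira-type stability argument: the vanishing $H^2(X_\pm,T_{X_\pm}(-\kd_\pm))=0$, valid for a generic semi-Fano, ensures that infinitesimal $N_\pm$-polarized deformations of $\kd_\pm$ lift to infinitesimal deformations of the pair $(X_\pm,\kd_\pm)$; integrability then follows from unobstructedness of deformations of semi-Fanos (cf.\ \cite{Corti2013}). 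The ``sufficient conditions'' in the statement then amount to checking that the period point constructed in step~(i) lies in $U_+$ and its hyperk\"ahler-rotated analogue in $U_-$.

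The main obstacle is the dominance result in step (ii): for each deformation type $\cX_\pm$, one must verify that the forgetful map from $(X_\pm,\kd_\pm)$-pairs to $N_\pm$-polarized $K3$s has non-empty open image, which typically requires a case-by-case analysis of the Picard lattice and of the restriction map $\Pic(X_\pm)\to \Pic(\kd_\pm)$. Once this is in hand, the matching itself is essentially free: take $\hkr=\id_{\kd}$ read through the markings $H^2(\kd_\pm;\Z)\iso L_{K3}$, and by the construction of step (i) it automatically realizes the prescribed orthogonal configuration $(N_+,N_-)$.
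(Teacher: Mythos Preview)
The paper does not actually prove this proposition: it is stated only as a citation of \cite[Proposition~6.17]{Corti2015}, with no argument given in the present survey. So there is no ``paper's own proof'' to compare against; what you have written is a sketch of the argument in the cited reference.

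Your two-step outline is broadly faithful to the CHNP approach. Step~(i) is essentially right: one chooses an orthogonal positive $3$-plane adapted to the configuration $(N_+,N_-)\subset L_{K3}$ and invokes surjectivity of the period map (and Torelli) to produce the $K3$ with the required hyperk\"ahler structure. The ``sufficient conditions'' in \cite{Corti2015} do include signature constraints on $\nres_\pm$ and on the transcendental part, as you say, though they also involve genericity (avoiding roots so that the K\"ahler cone is as expected) which you do not mention.

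In step~(ii) your formulation via $H^2(X_\pm,T_{X_\pm}(-\kd_\pm))=0$ is not quite how CHNP proceed. Their argument is less about lifting infinitesimal deformations of $\kd$ to the pair $(X,\kd)$ and more about showing, via Beauville's results on anticanonical divisors in Fanos, that the image of the forgetful map to $N_\pm$-polarized $K3$s contains a Zariski-open subset of the period domain, and then intersecting these two open conditions (coming from $\cX_+$ and $\cX_-$) inside a common parameter space determined by the configuration. Your dominance statement is morally equivalent, but the specific cohomological vanishing you invoke is not the mechanism used there, and in any case it is not something that holds uniformly across all semi-Fano types without further hypotheses. This is exactly the ``case-by-case'' difficulty you flag at the end, and it is the reason the result is stated as ``there are sufficient conditions'' rather than as an unconditional existence theorem.
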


Now consider the problem of finding matching bundles $\cE_\pm \to Z_\pm$ in order
to construct $\rm G_2$-instantons by application of Theorem \ref{thm:HenriqueThomas}.
For the compatibility hypothesis it is obviously necessary that Chern classes match:
\[ c_{1}(\cE_{+}|_{\kd_{+}}) = \hkr^{*}c_{1}(\cE_{-}|_{\kd_{-}}) \in H^2(\kd_+) . \]
Identifying $H^2(\kd_+; \Z) \cong L_{K3} \cong H^2(\kd_-; \Z)$ compatibly with
$\hkr^*$, this means we need
\[ c_1(\cE_{+|\kd_+}) \; = \; c_1(\cE_{-|\kd_-}) \; \in \; N_+ \cap N_-
\; = \; N_0 . \]
Hence, if $N_0$ is trivial,  both $c_1(\cE_{\pm|\kd_\pm})$ 
must also be trivial, which is a very restrictive condition on our bundles.
To allow more possibilities, we want  matchings $\hkr$
whose configuration $N_+, N_- \subset L_{K3}$ has non-trivial intersection
$N_0$.
Table 4 of \cite{Crowley2014} lists all 19 possible such matchings
with Picard rank $2$, among which we can find the pair of building blocks of Examples \ref{ex:2conics} and \ref{ex:2conics2}, coming from the Fano 3-folds  $X_+=\P^1\times\P^2$ and the double cover  $X_-\overset{2:1}{\longrightarrow}\P^1\times\P^2$ branched
over a $(2,2)$ divisor.
Several other choices would be possible to produce examples of $\rm G_2$-instantons.

\subsection{Hartshorne-Serre bundles over building blocks}

\subsubsection{The general construction algorithm}
\label{sec: general algorithm}

Let $X$ be a semi-Fano $3$--fold and $(Z,\kd)$ be the block constructed as a
blow-up of $X$ along the base locus $\sC$ of a generic anti-canonical pencil
(Proposition \ref{FanoBlock}).  
In \cite[\S3.1]{Menet2017} a general approach is provided for making the choices of $\mathcal{L}$ and
$\sS$ in Theorem \ref{thm: Hartshorne-Serre}, in order to construct a Hartshorne-Serre bundle $\cF\to Z$ which, up to a twist, yields the bundle $\cE$
meeting the requirements of Theorem \ref{thm:itcs}.
The approach may be summarised as follows:

\begin{summary}\label{generaltec}
Let $(Z_\pm,\kd_\pm)$ be
the building blocks constructed by blowing-up  $N_\pm$-polarised semi-Fano $3$-folds $X_\pm$ along the base locus $\sC_\pm$ of
a generic anti-canonical pencil (\cf Proposition \ref{FanoBlock}). 
Let $N_{0}\subset N_\pm$ be the sub-lattice of  orthogonal matching, as in \S \ref{subsec:match}.
Let $\mathcal{A}_\pm$ be the restriction of an ample class of $X_\pm$ to $\kd_\pm$ which is orthogonal to $N_{0}$.  We look for the Hartshorne-Serre parameters $\sS_\pm$ and $\mathcal{L}_\pm$ of Theorem \ref{thm: Hartshorne-Serre}, where  $\sS_+=\ell$ is an exceptional fibre in $Z_+$,  $\sS_-$ is a genus $0$ curve in $Z_-$ and  $\mathcal{L}_\pm\to Z_\pm$ are line bundles such that:
\setlength{\columnsep}{-80pt}
\raggedcolumns
\begin{multicols}{2}
\begin{enumerate}
\item
$c_{1}(\mathcal{L}_\pm) \in  N_{0}  \mod 2\Pic (\kd_\pm)$;

\item
$c_{1}(\mathcal{L}_{\pm|\kd_\pm})\cdot \mathcal{A}_\pm>0$;

\item
$c_{1}(\mathcal{L}_+)\cdot \sS_+=-1$ and \\
$(S_--c_1(\mathcal{L}_-))\cdot \sS_-=2$;

\item 
$c_{1}(\mathcal{L}_{+|\kd_+})^{2}=-4$ and $\kd_-\cdot \sS_- -\frac{1}{4}c_{1}(\mathcal{L}_{-|\kd_-})^{2}=2$;

\item
$\chi(\mathcal{L}^*_{+})=0$;

\item
$h^0(\mathcal{N}_{\sS_\pm/{Z_\pm}})=1+h^0(\mathcal{L}_\pm\otimes\mathcal{I}_{\sS_\pm})$
\end{enumerate}
\end{multicols}
\noindent Finally, among candidate data satisfying these constraints, inelasticity must be arranged ``by hand'.  

\end{summary}
The reader who would like to construct other examples might follow this 4-step programme: 
\begin{description}
\item[Step 1.]
Find two matching $N_\pm$-polarized semi-Fano $3$-folds $X_\pm$ such that:
\begin{itemize}
\item[(i)] 
there exists $x\in N_+$ such $x^2=-4$ (or more generally  $x^2=2k-6$, for a moduli space $\sM^{s}_{\kd,\mathcal{A}}(v)$ of dimension $2k$).
\item[(ii)]
there exists a primitive element $y\in N_0$ such that $y^2\leq -8$ and $4$ divides $y^2$.
\end{itemize}
\item[Step 2.]
Find $\mathcal{L}_\pm$ and $\sS_-$ which verify the conditions of Summary \ref{generaltec} (perhaps with  a computer).
\item[Step 3.]
The following must be checked by  \emph{ad-hoc} methods:
        \begin{enumerate}
        \item
         $H^2(\mathcal{L}^*_\pm)=0$, for the Hartshorne-Serre construction (Theorem \ref{thm: Hartshorne-Serre});
        \item
        $H^1(\mathcal{L}^*_{+})=0$, for transversality;        
        \item
        that divisors with small slope do not contain $\sS$, for asymptotic stability \cite[Proposition 10]{Jardim2017};
        \item
        $H^1(\cF_\pm)=0$ for  inelasticity (Proposition \ref{PInelastic2}).        
        \end{enumerate}
\item[Step 4.]
Conclude with similar arguments to \S \ref{sec moduli assoc to F|S}.
\end{description}

\subsubsection{Construction of \texorpdfstring{$\cF_+$}{\cF+} over  
\texorpdfstring{$X_+=\P^1\times\P^2$}
{X+ = P1 x P2} and \texorpdfstring{$\cF_-$}{\cF-} over 
\texorpdfstring{$X_-\protect\overset{2:1}{\longrightarrow}\P^1\times\P^2$}
{X-  -> P1 x P2}
} 
\label{construction}
\label{construction2}

In view of the constraints in Summary \ref{generaltec}, we apply Theorem \ref{thm: Hartshorne-Serre} to $Z_+=\Bl_\sC X_+$ as above, obtained by blowing up $X_+=\P^1\times\P^2$ from  Example
\ref{ex:2conics},
with parameters 
$$
\sS=\ell
\qandq\mathcal{L}=\mathcal{O}_{Z_+}(-\kd_+-G_+ +H_+).
$$
\begin{prop}[{\cite[Propositions 3.5, 4.4, 5.9]{Menet2017}}]\label{prop:exa}
Let $(Z_+,\kd_+)$ be a building block  
as in Example \ref{ex:2conics},
$\sC$ a pencil base locus and $\ell\subset Z_+$ an exceptional fibre of  $\wtilde{\sC}\rightarrow\mathscr{C}$.
There exists a rank $2$ asymptotically stable and inelastic Hartshorne-Serre  bundle $\cF_+\to Z_+$ obtained from $\ell$ such that 

\begin{enumerate}
\item
$c_{1}(\cF_+)=-\kd_+-G_++H_+$, and
\item
$\cF_+$ has a global section whose vanishing locus is a fibre $\ell$  of
$p_1:\wt{\sC}\rightarrow \mathscr{C}$.
\end{enumerate}
\end{prop}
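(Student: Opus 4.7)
The plan is to apply the rank $2$ Hartshorne-Serre correspondence (Theorem \ref{thm: Hartshorne-Serre}) to the data $\sS=\ell$ and $\mathcal{L}=\mathcal{O}_{Z_+}(-\kd_+-G_++H_+)$. The output is then a rank $2$ bundle $\cF_+\to Z_+$ with $\wedge^2\cF_+=\mathcal{L}$, which gives (1), and with a global section vanishing on $\ell$, giving (2). It remains to verify the two Hartshorne-Serre hypotheses together with asymptotic stability and inelasticity.

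For the normal-bundle condition $\wedge^2\mathcal{N}_{\ell/Z_+}=\mathcal{L}|_\ell$, I would first use that $\ell$ is a fibre of the $\P^1$-bundle $p_1\co \widetilde{\sC}\to\sC$ over a smooth curve, so $\mathcal{N}_{\ell/\widetilde{\sC}}\iso\mathcal{O}_{\P^1}$ (the pullback of $T\sC$ at a point); combined with the tautological identification $\mathcal{N}_{\widetilde{\sC}/Z_+}\iso\mathcal{O}_{\widetilde{\sC}}(-1)$ for the blow-up exceptional divisor, the normal bundle sequence
\[
0\to \mathcal{N}_{\ell/\widetilde{\sC}}\to \mathcal{N}_{\ell/Z_+}\to \mathcal{N}_{\widetilde{\sC}/Z_+}|_\ell\to 0
\]
yields $\wedge^2\mathcal{N}_{\ell/Z_+}\iso\mathcal{O}_{\P^1}(-1)$. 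On the other hand, since $G_+$ and $H_+$ are pulled back from $X_+$ they vanish on the contracted fibre $\ell$, while $\kd_+\cdot\ell=1$ because the proper transform satisfies $[\kd_+]=r_+^*[-K_{X_+}]-[\widetilde{\sC}]$ and $[\widetilde{\sC}]\cdot\ell=-1$. Thus $\mathcal{L}|_\ell\iso\mathcal{O}_{\P^1}(-1)$, as required. The vanishing $H^2(Z_+,\mathcal{L}^*)=0$ I would obtain by pushing forward along $r_+$, reducing to a cohomology statement on $X_+=\P^1\times\P^2$ that follows from Kodaira-type vanishing together with $\kd_+\sim -K_{X_+}$ and the positivity properties of $G_+,H_+$.

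For asymptotic stability, I would invoke \cite[Proposition 10]{Jardim2017} to reduce to $\mu$-stability of $\cF_+|_{\kd_+}$ with respect to $\mathcal{A}_+=\kclass_+|_{\kd_+}$. In rank $2$ this becomes a Hoppe-type test: using the Hartshorne-Serre defining sequence
\[
0\to \mathcal{O}_{Z_+}\to \cF_+\to \mathcal{L}\otimes\mathcal{I}_\ell\to 0,
\]
a destabilising line subsheaf $\mathcal{O}_{\kd_+}(D)\hookrightarrow\cF_+|_{\kd_+}$ would force $D$ to be a class whose effective representatives cannot contain $\ell\cap\kd_+$ inside $\kd_+$. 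For inelasticity, by Proposition \ref{PInelastic2} it suffices to prove $H^1(Z_+,\cF_+)=0$, which I would read off the long exact sequence of the same Hartshorne-Serre sequence, controlling $H^1(\mathcal{L}\otimes\mathcal{I}_\ell)$ against the boundary map from $H^0(\mathcal{O}_{Z_+})$ and using that $\chi(\mathcal{L}^*)=0$ and $H^i(\mathcal{L}^*)=0$ for $i\in\{0,1,2\}$ (cf. item (5) of Summary \ref{generaltec}).

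The main obstacle will be the asymptotic stability: the numerical class $c_1(\cF_+|_{\kd_+})$ sits in the rank $2$ Picard lattice $N_+$, but candidate destabilising subsheaves live in the much larger lattice $\Pic(\kd_+)$. The control therefore requires both the Hoppe inequalities and an effectivity/non-containment argument tailored to the geometry of the curve $\ell$ relative to $\kd_+$, which is where the specific twist $\mathcal{L}=\mathcal{O}_{Z_+}(-\kd_+-G_++H_+)$ (motivated by the constraints of Summary \ref{generaltec}) has to be used in an essential way.
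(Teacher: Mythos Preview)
Your approach is essentially the one the paper (and the source \cite{Menet2017}) takes: apply Theorem~\ref{thm: Hartshorne-Serre} with $\sS=\ell$, $\mathcal{L}=\mathcal{O}_{Z_+}(-\kd_+-G_++H_+)$, check asymptotic stability via the Hoppe-type criterion \cite[Proposition~10]{Jardim2017}, and deduce inelasticity from Proposition~\ref{PInelastic2}. Your normal-bundle computation and the identification $\mathcal{L}|_\ell\cong\mathcal{O}_{\P^1}(-1)$ are correct.

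Two points deserve tightening. First, in the inelasticity step you write ``by Proposition~\ref{PInelastic2} it suffices to prove $H^1(Z_+,\cF_+)=0$''; this is not quite right. The vanishing $H^1(\cF_+)=0$ is only the \emph{hypothesis} of Proposition~\ref{PInelastic2}; you must then also verify the dimension identity \eqref{eq: dim inelast}, i.e.\ that $\tfrac12\dim\sM^s_{\kd_+,\mathcal{A}_+}(v)=h^0(\mathcal{N}_{\ell/Z_+})+h^1(\mathcal{L}^*)-h^0(\cF_+)+1$. In the case at hand both sides equal $1$ (cf.\ Proposition~\ref{qui} and conditions (5)--(6) of Summary~\ref{generaltec}), but this check is the actual content of the inelasticity argument, not the vanishing of $H^1(\cF_+)$ alone. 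Second, your worry that destabilising sub-line-bundles of $\cF_+|_{\kd_+}$ range over a lattice ``much larger'' than $N_+$ is overstated: the building block comes from a \emph{generic} anticanonical $K3$ (Proposition~\ref{FanoBlock}), so $\Pic(\kd_+)=N_+$ has rank~$2$, and the Hoppe test reduces to a finite slope check against divisor classes in $N_+$.
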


Similarly, one applies Theorem \ref{thm: Hartshorne-Serre} to the
building block $Z_-$ obtained by blowing up  $X_-\overset{2:1}{\longrightarrow}\P^1\times\P^2$, from Example  \ref{ex:2conics2}, with
\[
[\sS]=h_-
\qandq
\mathcal{L}=\mathcal{O}_{Z_-}(G_-).
\]
\begin{prop}[{\cite[Propositions 3.9, 4.5, 5.10]{Menet2017}}]\label{prop:exa2}
Let $(Z_-,\kd_-)$ be a building block 
provided in Example \ref{ex:2conics2} and $\sS$ a line of class $h_-$.
There exists a rank 2 Hartshorne-Serre  bundle $\cF_-\to Z_-$ obtained from $\sS$ such that:
\begin{enumerate}
\item
$c_{1}(\cF_-)=G_-$, and
\item
$\cF_-$ has a global section whose vanishing locus is $\sS$, where $\left[\sS\right]=h_-$.
\end{enumerate}

\end{prop}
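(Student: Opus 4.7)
The plan is to apply the Hartshorne--Serre correspondence (Theorem \ref{thm: Hartshorne-Serre}) directly, with subscheme $\sS$ of class $h_-$ and line bundle $\mathcal{L}=\mathcal{O}_{Z_-}(G_-)$. Since $\sS$ is a smooth rational curve in the smooth threefold $Z_-$, it is automatically a local complete intersection of codimension two, so one only needs to verify the two hypotheses of the theorem: (a) $H^{2}(Z_{-},\mathcal{O}_{Z_-}(-G_{-}))=0$; and (b) $\wedge^{2}\mathcal{N}_{\sS/Z_-}\cong\mathcal{O}_{Z_-}(G_-)|_{\sS}$. Once these are established, conclusions (1) and (2) of the proposition follow immediately, as the resulting rank-$2$ bundle $\cF_-$ then satisfies $\wedge^{2}\cF_-=\mathcal{L}=\mathcal{O}_{Z_-}(G_-)$ and carries a distinguished section whose vanishing scheme is $\sS$.

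For (b), both sides are line bundles on $\sS\cong\P^{1}$, so it suffices to show that both have degree zero. Using the normal bundle sequence, $\det\mathcal{N}_{\sS/Z_-}\cong(-K_{Z_-})|_{\sS}\otimes T_{\sS}^{\vee}$, giving $\deg\wedge^{2}\mathcal{N}_{\sS/Z_-}=(-K_{Z_-})\cdot\sS-2$. A generic representative of $h_-$ avoids the exceptional divisor $\wt{\sC_-}$, so $(-K_{Z_-})\cdot\sS=(-r_{-}^{*}K_{X_-})\cdot\sS$. Combining the identity $-K_{X_-}=\pi^{*}\mathcal{O}_{\P^{1}\times\P^{2}}(1,2)$ for the double cover with the projection formula for $r_{-}\circ\pi$ (of degree $2$) and the normalization $h_-=\tfrac{1}{2}(r_-\circ\pi)^{*}[\{x\}\times\P^{1}]$, a direct intersection computation on $\P^{1}\times\P^{2}$ yields $(-K_{Z_-})\cdot\sS=2$, hence $\deg\wedge^{2}\mathcal{N}_{\sS/Z_-}=0$. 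The same projection formula gives $G_-\cdot h_-=[\{x\}\times\P^{2}]\cdot[\{x\}\times\P^{1}]=0$ in $\P^{1}\times\P^{2}$, so $\mathcal{O}_{Z_-}(G_-)|_{\sS}$ is also of degree zero. Both line bundles are therefore trivial on $\P^{1}$, hence isomorphic.

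For (a), since $r_{-}$ is the blowup of a smooth curve in a smooth threefold, $R^{i}r_{-*}\mathcal{O}_{Z_-}=0$ for $i>0$, and the projection formula identifies $H^{2}(Z_{-},\mathcal{O}_{Z_-}(-G_-))\cong H^{2}(X_{-},\mathcal{O}_{X_-}(-G_-))$. Writing $\mathcal{O}_{X_-}(-G_-)=\pi^{*}\mathcal{O}_{\P^{1}\times\P^{2}}(-1,0)$ and using the trace decomposition $\pi_{*}\mathcal{O}_{X_-}=\mathcal{O}\oplus\mathcal{O}(-1,-1)$ for the double cover reduces the computation to
\[H^{2}\!\bigl(\P^{1}\times\P^{2},\;\mathcal{O}(-1,0)\oplus\mathcal{O}(-2,-1)\bigr),\]
which vanishes by Künneth, since $H^{*}(\P^{1},\mathcal{O}(-1))=0$ and $H^{*}(\P^{2},\mathcal{O}(-1))=0$. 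With both hypotheses verified, Theorem \ref{thm: Hartshorne-Serre} produces the desired bundle $\cF_-$.

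The main obstacle is bookkeeping: one must consistently manage the two sources of the factor $2$ in the intersection calculus on $Z_-$---the degree of the double cover $\pi$ and the normalization $h_-=\tfrac{1}{2}(r_-\circ\pi)^{*}[\{x\}\times\P^{1}]$---and the distinction between the divisor classes $H_-,\,G_-$ pulled back from the two factors of $\P^{1}\times\P^{2}$. Once this is done correctly the cohomological and intersection-theoretic input is elementary and Hartshorne--Serre does the rest.
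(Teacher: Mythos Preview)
Your proposal is correct and follows exactly the approach the paper indicates: the survey itself gives no detailed proof beyond specifying the Hartshorne--Serre data $[\sS]=h_-$ and $\mathcal{L}=\mathcal{O}_{Z_-}(G_-)$, and you correctly verify the two hypotheses of Theorem~\ref{thm: Hartshorne-Serre} for this data. Your intersection and cohomology computations (adjunction for $\wedge^2\mathcal{N}_{\sS/Z_-}$, the trace splitting $\pi_*\mathcal{O}_{X_-}=\mathcal{O}\oplus\mathcal{O}(-1,-1)$, and K\"unneth) are all sound; the one phrase that could be tightened is the appeal to a ``generic representative of $h_-$ avoiding $\wt{\sC_-}$'', since what you actually need is the purely numerical fact $\wt{\sC_-}\cdot h_-=0$, which follows from $h_-$ being (half of) a pullback class and $r_{-*}\wt{\sC_-}=0$.
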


\begin{remark}
In order to check the stability of Hartshorne-Serre bundles over  $\kd_\pm$,
we use a tailor-made instance  \cite[Proposition 10]{Jardim2017} of a more
general Hoppe-type stability criterion for holomorphic bundles over so-called  \emph{polycyclic varieties},
whose Picard group is free Abelian  \cite[Corollary 4]{Jardim2017}. That tool allows one to mass-produce examples of holomorphic bundles, over building blocks, which are asymptotically stable, hence admit HYM metrics (\cf Theorem \ref{thm:saearp}).
\end{remark}



In the context above, the moduli spaces of the stable bundles $\cF_{\pm|\kd_\pm}$
have the `minimal' positive dimension, for transversal intersection to occur:
\begin{prop}\label{qui}
Let $(Z_\pm,\kd_\pm)$ be the building block provided in Examples \ref{ex:2conics} and \ref{ex:2conics2}, and let  $\cF_\pm\to Z_\pm$ be the asymptotically stable bundles constructed in
Propositions \ref{prop:exa} and \ref{prop:exa2}.
Let $\sM^{s}_{\kd_\pm,\mathcal{A}_\pm}(v_\pm)$ be the moduli space of $\mathcal{A}_\pm$-stable bundles on $\kd_\pm$ with Mukai vector $v_\pm=v(\cF_{\pm|\kd_\pm})$. We have:
$$\dim \sM^{s}_{\kd_\pm,\mathcal{A}_\pm}(v_\pm)=2.$$

\end{prop}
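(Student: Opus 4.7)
The plan is to reduce the dimension computation to a Mukai self-pairing calculation on the $K3$ surfaces $\kd_\pm$, and then to exploit the way the Hartshorne-Serre data in Propositions \ref{prop:exa} and \ref{prop:exa2} were set up (\cf conditions (3)-(4) of Summary \ref{generaltec}) to arrive at $\langle v_\pm, v_\pm\rangle = 0$.

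First I would invoke Mukai's theorem: on a $K3$ surface $\kd$, any smooth non-empty component of $\sM^s_{\kd,\mathcal{A}}(v)$ has dimension $\langle v,v\rangle + 2$. Since Propositions \ref{prop:exa}-\ref{prop:exa2} establish that $\cF_\pm|_{\kd_\pm}$ are (asymptotically, hence $\mathcal{A}_\pm$-)stable, the points $[\cF_\pm|_{\kd_\pm}]$ are smooth in their moduli and it remains to show that $\langle v_\pm, v_\pm\rangle = 0$. For a rank-$2$ sheaf $\mathcal{G}$ on a $K3$, a direct expansion of the Mukai pairing applied to $v(\mathcal{G}) = (2, c_1, 2+\tfrac{c_1^2}{2}-c_2)$ gives
$$
\langle v, v\rangle = -c_1^2 + 4c_2 - 8,
$$
where the Chern classes are regarded as integers via intersection on $\kd$.

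Then I would compute $c_1(\cF_\pm|_{\kd_\pm})^2$ and $c_2(\cF_\pm|_{\kd_\pm})$ using Propositions \ref{prop:exa}-\ref{prop:exa2} and the intersection theory on the blocks $Z_\pm$. By the Hartshorne-Serre construction, $c_1(\cF_\pm) = c_1(\mathcal{L}_\pm)$ and $c_2(\cF_\pm) = [\sS_\pm]$. For $\cF_+$, restricting to $\kd_+$ and using the basic intersection relations on $Z_+ = \Bl_{\sC}(\P^1\times\P^2)$ (in particular $E\cdot\ell=-1$, $r_*\ell=0$, and $\kd_+\cdot\ell = 1$ since $\sC\subset\kd_+$ is smooth of multiplicity one), one obtains $c_1(\cF_+|_{\kd_+})^2 = -4$ and $c_2(\cF_+|_{\kd_+}) = 1$, whence $\langle v_+,v_+\rangle = 4 + 4 - 8 = 0$. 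For $\cF_-$, the analogous computation on $Z_- = \Bl_{\sC_-} X_-$ (keeping careful track of the factor of $2$ between $h_-$ and $(r_-\circ\pi)^*[\{x\}\times\P^1]$ coming from the double cover) yields
$$
\langle v_-,v_-\rangle = 4\Bigl(\kd_-\cdot\sS_- - \tfrac{1}{4}\, c_1(\mathcal{L}_-|_{\kd_-})^2\Bigr) - 8 = 4\cdot 2 - 8 = 0,
$$
where the last equality is exactly condition (4) of Summary \ref{generaltec}.

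The main obstacle is the bookkeeping of the intersection numbers on $Z_\pm$, particularly on the double cover side where one must distinguish divisor from curve classes and keep the $\tfrac12$-factor in $h_-$ consistent. These calculations are however entirely mechanical given the explicit descriptions in Examples \ref{ex:2conics}-\ref{ex:2conics2}, and the numerical identities $\langle v_\pm,v_\pm\rangle=0$ are in fact built into conditions (3)-(4) of Summary \ref{generaltec}, which is precisely why the Hartshorne-Serre parameters $(\mathcal{L}_\pm, \sS_\pm)$ were chosen as in Propositions \ref{prop:exa}-\ref{prop:exa2} in the first place. Once $\langle v_\pm,v_\pm\rangle = 0$ is established, Mukai's dimension formula delivers $\dim\sM^s_{\kd_\pm,\mathcal{A}_\pm}(v_\pm) = 2$ at the smooth points $[\cF_\pm|_{\kd_\pm}]$, as claimed.
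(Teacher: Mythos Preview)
Your approach is correct and is precisely the one the paper has in mind: the paper does not spell out a proof but immediately after the statement recalls the Mukai vector $v(\cF)=(\rk\cF,\,c_1(\cF),\,\chi(\cF)-\rk\cF)$ with $\chi(\cF)=\tfrac{c_1(\cF)^2}{2}+2\rk\cF-c_2(\cF)$, which together with the standard dimension formula $\dim\sM^s_{\kd,\mathcal A}(v)=\langle v,v\rangle+2$ reduces the claim to the numerical checks you carry out. Your identification of these checks with conditions~(4) of Summary~\ref{generaltec} (namely $c_1(\mathcal L_+|_{\kd_+})^2=-4$ and $\kd_-\cdot\sS_--\tfrac14 c_1(\mathcal L_-|_{\kd_-})^2=2$) is exactly the point, and your intersection computations $\kd_+\cdot\ell=1$ and $c_2(\cF_\pm|_{\kd_\pm})=\kd_\pm\cdot\sS_\pm$ are the right way to pass from $Z_\pm$ to $\kd_\pm$.
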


Recall that (see eg. \cite{Huybrechts2010}) that the \emph{Mukai vector} of  a vector
bundle $\cF\to \kd$ on a $K3$ surface is
defined as
\[
v(\cF):=\left(\rk \cF,\, c_{1}(\cF),\,\chi(\cF)-\rk \cF \right)\in  \left(H^{0}\oplus H^{2}\oplus H^{4}\right)(\kd,\Z),
\] 
with
$\chi(\cF)=\frac{c_{1}(\cF)^2}{2}+2\rk \cF-c_{2}(\cF)$.

\subsubsection{Inelasticity of asymptotically stable Hartshorne-Serre bundles}\label{sec:inelasticity}

These results hold for general building blocks and may be of independent interest.
Recall that a bundle $\cF$ over a building block $(Z,\kd)$ is \emph{inelastic} if 
$$
H^{1}(Z,\Endr_{0}(\cF)(-\kd))=0.
$$
This condition means that there are no global deformations of the bundle $\cF$ which maintain  $\cF_{|\kd}$ fixed at infinity. The following  characterisation of inelasticity,
in the case of asymptotically stable bundles, relates the freedom to extend $\cF$ and the dimension of the moduli space $\sM^{s}_{\kd,\mathcal{A}}(v_\cF)$.
The proof uses Serre duality and Maruyama's characterisation of the moduli space of stable  bundles over a polarised $K3$ surface \cite[Proposition 6.9]{Maruyama1978}.

\begin{prop}\label{Inelasticity1}
Let $(Z,\kd)$ be a building block and $\cF$ an asymptotically stable bundle on $Z$. Let $\sM^{s}_{\kd,\mathcal{A}}(v)$ be the moduli space of $\mathcal{A}$-$\mu$-stable bundles on $\kd$ with Mukai vector $v=v(\cF_{|\kd})$.
Then $\cF$ is inelastic if and only if 
$$\dim \Ext^{1}(\cF,\cF)=\frac{1}{2}\dim \sM^{s}_{\kd,\mathcal{A}}(v).$$
\end{prop}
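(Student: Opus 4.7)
The plan is to combine the long exact sequence coming from restriction to $\kd$ with Serre duality on both $Z$ and $\kd$ in order to read off both directions at once from a single dimension count.

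First I would set up the standard short exact sequence of sheaves on $Z$,
\begin{equation*}
  0\to \Endr_0(\cF)(-\kd)\to \Endr_0(\cF)\to \Endr_0(\cF_{|\kd})\to 0,
\end{equation*}
and take its associated long exact sequence in cohomology. The point is that the higher cohomologies can be paired by Serre duality on the building block: since $\kd$ is anticanonical one has $K_Z\iso \mathcal{O}_Z(-\kd)$, so the trace pairing $\Endr_0(\cF)\iso \Endr_0(\cF)^*$ gives
\begin{equation*}
  H^i\!\left(Z,\Endr_0(\cF)(-\kd)\right)\iso H^{3-i}\!\left(Z,\Endr_0(\cF)\right)^{*},\qquad i=0,\dots,3.
\end{equation*}

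Next I would dispose of the boundary terms. Asymptotic stability means $\cF_{|\kd}$ is $\mu$-stable hence simple, so $H^0(\kd,\Endr_0(\cF_{|\kd}))=0$, and then $H^2(\kd,\Endr_0(\cF_{|\kd}))=0$ by Serre duality on the $K3$. I also need $H^0(Z,\Endr_0(\cF))=0$: a global trace-free endomorphism restricts to a trace-free endomorphism of the simple bundle $\cF_{|\kd}$, so it vanishes on $\kd$; because the normal bundle $N_{\kd/Z}$ is trivial (part of the building block definition), the same argument applied to $\Endr_0(\cF)(-n\kd)\mid_\kd\iso \Endr_0(\cF_{|\kd})$ iterates and shows vanishing to all orders along $\kd$, hence vanishing on $Z$ by unique continuation. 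By Serre duality this also gives $H^3(Z,\Endr_0(\cF))=0$.

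With all these vanishings the long exact sequence collapses to
\begin{equation*}
  0\to H^1(Z,\Endr_0(\cF)(-\kd))\to H^1(Z,\Endr_0(\cF))\xrightarrow{\res} H^1(\kd,\Endr_0(\cF_{|\kd}))\to H^1(Z,\Endr_0(\cF))^{*}\to H^1(Z,\Endr_0(\cF)(-\kd))^{*}\to 0,
\end{equation*}
where the last two terms come from Serre duality applied to $H^2(Z,\Endr_0(\cF)(-\kd))$ and $H^2(Z,\Endr_0(\cF))$. Writing $a=\dim H^1(Z,\Endr_0(\cF)(-\kd))$, $b=\dim H^1(Z,\Endr_0(\cF))$ and $c=\dim H^1(\kd,\Endr_0(\cF_{|\kd}))$, the alternating-sum identity immediately yields $c=2(b-a)$, i.e.\ $b=\tfrac{c}{2}+a$. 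Finally I would identify $b=\dim \Ext^1(\cF,\cF)$ using the splitting $\Endr(\cF)=\Endr_0(\cF)\oplus\mathcal{O}_Z$ together with $H^1(Z,\mathcal{O}_Z)=0$ (building blocks are simply connected Kähler, so $h^{0,1}=0$), and $c=\dim\sM^{s}_{\kd,\mathcal{A}}(v)$ by Maruyama's smoothness criterion at stable points. The desired equivalence then reduces to $a=0\iff b=c/2$, which both directions of the proposition are asking for.

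The main technical point to be careful about is the vanishing $H^0(Z,\Endr_0(\cF))=0$, since $\cF$ itself need not be stable on $Z$; the argument crucially uses both simplicity of $\cF_{|\kd}$ and the triviality of $N_{\kd/Z}$, and it is what ensures the long exact sequence truly reduces to a four-term sequence symmetric under Serre duality.
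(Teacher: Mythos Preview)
Your proof is correct and follows exactly the approach the paper indicates (Serre duality on $Z$ and on $\kd$, together with Maruyama's smoothness criterion). One small over-complication: once $H^0(\kd,\Endr_0(\cF_{|\kd}))=H^2(\kd,\Endr_0(\cF_{|\kd}))=0$, the long exact sequence already splits off the five-term piece
\[
0\to H^1(-\kd)\to H^1\to H^1_\kd\to H^2(-\kd)\to H^2\to 0
\]
regardless of whether $H^0(Z,\Endr_0(\cF))$ vanishes, so your careful argument for that vanishing (though valid) is not actually needed for the dimension count.
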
For Hartshorne-Serre bundles of rank $2$ satisfying certain topological hypotheses, we may express the half-dimension of the moduli space in terms of the construction data:
\begin{prop}[{\cite[Corollary 5.8]{Menet2017}}]\label{PInelastic2}
Let $(Z,\kd)$ be a building block, and let $\cF \to Z$ be an asymptotically stable Hartshorne--Serre bundle obtained from a genus $0$ curve $\sS\subset Z$ and a line bundle $\mathcal{L}\to Z$ as in Theorem \ref{thm: Hartshorne-Serre}. 
Let $\sM^{s}_{\kd,\mathcal{A}}(v)$ be the moduli space of $\mathcal{A}$-$\mu$-stable bundles on $\kd$ with Mukai vector $v=v(\cF_{|\kd})$.
Assume moreover that $H^1(\cF)=0$.

Then $\cF$ is inelastic if and only if 
\begin{equation}        \label{eq: dim inelast}
\frac{1}{2}\dim \sM^{s}_{\kd,\mathcal{A}}(v)
=
h^0(\mathcal{N}_{\sS/Z})+h^1(\mathcal{L}^*)-h^0(\cF)+1.
\end{equation}
\end{prop}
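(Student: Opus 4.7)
By Proposition \ref{Inelasticity1}, inelasticity of $\cF$ is equivalent to $\dim\Ext^1(\cF,\cF) = \frac{1}{2}\dim\sM^{s}_{\kd,\cA}(v)$, so the proposition reduces to the identity
\begin{equation*}
\dim\Ext^1(\cF,\cF) \;=\; h^0(\mathcal{N}_{\sS/Z}) + h^1(\mathcal{L}^*) - h^0(\cF) + 1,
\end{equation*}
which I aim to establish unconditionally under the proposition's standing hypotheses. Its heuristic content is a parameter count for the Hartshorne--Serre data modulo the ambiguity in the choice of defining section: the $h^0(\mathcal{N}_{\sS/Z})$ accounts for deformations of $\sS$ inside $Z$, the $h^1(\mathcal{L}^*)+1$ for the Hartshorne--Serre extension classes, and the $-h^0(\cF)+1$ for quotienting by the choice of section of $\cF$ up to scalar.

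First I would start from the defining Hartshorne--Serre sequence $0 \to \cO_Z \xrightarrow{s} \cF \to \cI_{\sS}\otimes\mathcal{L} \to 0$ and its dual Koszul-type resolution
\begin{equation*}
0 \to \mathcal{L}^* \to \cF^\vee \to \cI_{\sS} \to 0,
\end{equation*}
derived by dualising (at the sheaf level) using $\Extr^1(\cI_{\sS},\cO_Z)\cong\det\mathcal{N}_{\sS/Z}\otimes\cO_{\sS} = \mathcal{L}|_{\sS}$ (the last equality being a Hartshorne--Serre hypothesis). Tensoring the defining sequence with $\cF^\vee$ and using $\cF^\vee\otimes\mathcal{L}\cong\cF$ (as $\cF$ has rank $2$ with determinant $\mathcal{L}$) produces
\begin{equation*}
0 \to \cF^\vee \to \Endr(\cF) \to \cF\otimes\cI_{\sS} \to 0. \qquad (\star)
\end{equation*}
A parallel restriction computation, inserting the Koszul resolution back over $\sS$ and matching determinants via the Hartshorne--Serre hypothesis, identifies $\cF|_{\sS} \cong \mathcal{N}_{\sS/Z}$, so that the restriction sequence $0 \to \cF\otimes\cI_{\sS} \to \cF \to \mathcal{N}_{\sS/Z} \to 0$, combined with the hypothesis $H^1(\cF)=0$, gives the bookkeeping relation
\begin{equation*}
h^0(\cF\otimes\cI_{\sS}) - h^1(\cF\otimes\cI_{\sS}) \;=\; h^0(\cF) - h^0(\mathcal{N}_{\sS/Z}).
\end{equation*}

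The proof then concludes by chasing the long exact sequence of $(\star)$. Several ancillary inputs will be used: asymptotic $\mu$-stability of $\cF|_{\kd}$ forces $\cF$ to be simple, hence $h^0(\Endr\cF)=1$ (any endomorphism restricts to a scalar on the stable $\cF|_{\kd}$ and the kernel of restriction embeds in $H^0(\Endr_0(\cF)(-\kd))$, which one can show vanishes); the slope inequality applied to $\cO_Z\hookrightarrow\cF$ gives $h^0(\mathcal{L}^*)=0$; and since $\sS$ is connected of genus zero and a building block satisfies $h^1(\cO_Z)=0$, one has $h^0(\cI_{\sS})=h^1(\cI_{\sS})=0$. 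Substituting these into the Koszul resolution yields $h^0(\cF^\vee)=0$ and $h^1(\cF^\vee)=h^1(\mathcal{L}^*)$. Feeding all of this into the long exact sequence of $(\star)$, the unknown ranks telescope against the displayed relation to produce the claimed identity.

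The main obstacle I expect is controlling the connecting homomorphism $H^1(\cF\otimes\cI_{\sS})\to H^2(\cF^\vee)$, whose kernel decides how much of $H^1(\cF\otimes\cI_{\sS})$ injects into $H^1(\Endr\cF)$. The plan is to handle it either by invoking Serre duality on $Z$ (using $K_Z=-\kd$) together with the $\mu$-stability of $\cF|_{\kd}$ to identify it with a dual map that vanishes, or by an Euler-characteristic cross-check that balances the alternating sums along $(\star)$ against those along the Koszul resolution; the latter calculation is the piece of bookkeeping I expect to require the most care.
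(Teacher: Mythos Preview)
Your overall strategy is sound and almost certainly coincides with the argument behind the cited \cite[Corollary~5.8]{Menet2017} (note that the present paper is a survey and does not itself give a proof): the reduction via Proposition~\ref{Inelasticity1}, the sequence $(\star)$, the identification $\cF|_{\sS}\cong\mathcal{N}_{\sS/Z}$, and the bookkeeping relation from $H^1(\cF)=0$ are all correct and are exactly the ingredients one needs.

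The ``main obstacle'' you flag, however, is not an obstacle at all, and recognising this is the missing step. In fact $H^2(Z,\cF^\vee)=0$ under the stated hypotheses. To see this, take cohomology of your dual Koszul sequence $0\to\mathcal{L}^*\to\cF^\vee\to\cI_{\sS}\to 0$: since $H^2(Z,\mathcal{L}^*)=0$ is one of the Hartshorne--Serre hypotheses in Theorem~\ref{thm: Hartshorne-Serre}, one gets $H^2(\cF^\vee)\hookrightarrow H^2(\cI_{\sS})$. Now from $0\to\cI_{\sS}\to\cO_Z\to\cO_{\sS}\to 0$ one finds $H^2(\cI_{\sS})$ sandwiched between $H^1(\cO_{\sS})$ and $H^2(\cO_Z)$; the former vanishes because $\sS$ has genus~$0$, and the latter vanishes for any building block (Serre duality gives $h^2(\cO_Z)=h^1(\cO_Z(-\kd))$, which is zero by the sequence $0\to\cO_Z(-\kd)\to\cO_Z\to\cO_{\kd}\to 0$ since $h^1(\cO_Z)=0$). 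Hence $H^2(\cF^\vee)=0$, the long exact sequence of $(\star)$ terminates at $H^1(\cF\otimes\cI_{\sS})$, and your telescoping argument goes through cleanly to give
\[
h^1(\sEnd\,\cF)=h^1(\mathcal{L}^*)+1-\bigl(h^0(\cF\otimes\cI_{\sS})-h^1(\cF\otimes\cI_{\sS})\bigr)
=h^0(\mathcal{N}_{\sS/Z})+h^1(\mathcal{L}^*)-h^0(\cF)+1.
\]
So there is no need for the Serre-duality or Euler-characteristic workaround you propose; a direct vanishing argument suffices. (Your justification of $h^0(\mathcal{L}^*)=0$ is a little terse---one should iterate along fibres of $\zeta\co Z\to\P^1$ using the triviality of $\cO_\kd(\kd)$---but this is a minor point.)
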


\subsection{Proof of Theorem  \ref{thm:summaryintro}}
\label{sec moduli assoc to F|S}

Let $X_+=\P^1\times\P^2$ as in
Example \ref{ex:2conics}, and
$\kd_+ \subset X_+$ be a smooth anti-canonical $K3$ divisor.
For suitable choices of polarisation $\mathcal{A}_+$ on $\kd_+$ and
Mukai vector $v_{\kd_+}$, the associated moduli space $\sM^{s}_{\kd_+,\mathcal{A}_+}(v_{\kd_+})$ of (rank $2$) $\mathcal{A}_+$-stable bundles   is
$2$-dimensional.
For a smooth curve $\sC \in |{-}K_{X_+|\kd_+}|$, let $Z_+ := \textrm{Bl}_{\sC} X_+$
be the building block resulting from Proposition \ref{FanoBlock}. Then, for each exceptional fibre $\ell\subset\widetilde{\sC}$,  the  Mukai vector 
$$
v_{Z_+}:=(2, -\kd_+-G_++H_+, \ell)\in \left(H^{0}\oplus H^{2}\oplus H^{4}\right)(Z_+,\Z)
$$
has the property that, given a bundle $\cF_+\to Z_+$ as in Proposition \ref{prop:exa} with $(\rk \cF_+, c_{1}(\cF_+),c_{2}(\cF_+))=v_{Z_+}$, the restriction
to $\kd_+$ has Mukai vector $v_{\kd_+}$, so $\mathcal{G} := \cF_{+|\kd_+} \in \sM^{s}_{\kd_+,\mathcal{A}_+}(v_{\kd_+})$.
Thus the Hartshorne-Serre construction yields a
family of asymptotically stable vector bundles $\{(\cF_+)_p\to Z_+ \mid p \in \sC\}$ with 
\begin{equation}
\label{eq: Mukai vector v+}
(\rk \cF_+, c_{1}(\cF_+),c_{2}(\cF_+))=v_{Z_+}
\end{equation}
parametrised by $\sC$ itself.

One crucial feature of the building block obtained from $X_+=\P^1\times\P^2$ is the fact that the moduli space of bundles over the anti-canonical $K3$ divisor $\kd_+$ is actually isomorphic to $\kd_+$ itself: 

\begin{prop}[{\cite[Lemma 4.7 \& Proposition 4.8]{Menet2017}}]\label{iso}
For each $p\in \kd_+$, there exists an  $\mathcal{A}_+$-$\mu$-stable and  rank $2$ Hartshorne-Serre  bundle $\mathcal{G}_p\to \kd_+$ obtained from $p$.
The induced map 
$$
\begin{array}{rcl}
g:\kd_+&\longrightarrow&\sM_{\kd_+,\mathcal{A}_+}^{s}(v_{\kd_+})\\
p&\longmapsto&\mathcal{G}_p
\end{array}
$$ is an isomorphism of $K3$ surfaces.

\end{prop}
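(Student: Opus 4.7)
The plan is to construct the bundle $\mathcal{G}_p$ directly on $\kd_+$ by applying Hartshorne--Serre to $\{p\}$, verify its stability and Mukai invariants so as to land in $\sM_{\kd_+,\mathcal{A}_+}^s(v_{\kd_+})$, then recognise the induced map as a biholomorphism by standard $K3$-surface arguments. Explicitly, apply Theorem~\ref{thm: Hartshorne-Serre} to the codimension-$2$ subscheme $\{p\} \hookrightarrow \kd_+$ with line bundle $\mathcal{L}_+ := \mathcal{O}_{Z_+}(-\kd_+ - G_+ + H_+)|_{\kd_+}$ (i.e.\ $\wedge^2 \cF_+|_{\kd_+}$). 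The normal-bundle hypothesis is vacuous for a point, and the vanishing $H^2(\kd_+, \mathcal{L}_+^*) = 0$ is verified by Serre duality on the $K3$ together with a direct cohomology computation. The outcome is a rank-$2$ holomorphic bundle $\mathcal{G}_p$ fitting into
\[
0 \to \mathcal{O}_{\kd_+} \to \mathcal{G}_p \to \mathcal{L}_+ \otimes \mathcal{I}_p \to 0,
\]
with a distinguished section vanishing at $p$, and whose Mukai vector equals $v_{\kd_+}$ by Riemann--Roch (so $c_1^2 = -4$, $c_2 = 1$ and $\chi(\mathcal{G}_p) = 1$).

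Next, verify $\mathcal{A}_+$-$\mu$-stability of $\mathcal{G}_p$ by the same slope argument used in Proposition~\ref{prop:exa} and the Hoppe-type criterion of \cite[Corollary 4]{Jardim2017}: any destabilising sub-line bundle $\mathcal{M}$ would yield a non-zero map $\mathcal{M} \to \mathcal{L}_+ \otimes \mathcal{I}_p$ violating the numerical constraints of Summary~\ref{generaltec}. Hence $[\mathcal{G}_p] \in \sM_{\kd_+,\mathcal{A}_+}^s(v_{\kd_+})$. To upgrade this pointwise construction to a holomorphic family, perform the Hartshorne--Serre construction relative to the diagonal $\Delta \subset \kd_+ \times \kd_+$, viewed as the universal length-one subscheme on $\kd_+$. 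This produces a (quasi-)universal bundle $\wt{\mathcal{G}} \to \kd_+ \times \kd_+$, and the universal property of the moduli space yields a holomorphic map $g: \kd_+ \to \sM_{\kd_+,\mathcal{A}_+}^s(v_{\kd_+})$ with $g(p) = [\mathcal{G}_p]$.

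Injectivity of $g$ reduces to showing $h^0(\kd_+, \mathcal{G}_p) = 1$. Combining $h^0(\mathcal{L}_+ \otimes \mathcal{I}_p) \leq h^0(\mathcal{L}_+) = 0$ (from the aforementioned cohomology calculation) with the defining exact sequence yields $h^0(\mathcal{G}_p) = h^0(\mathcal{O}_{\kd_+}) = 1$, so the vanishing locus $\{p\}$ of the unique section (up to scalar) is recoverable intrinsically from the isomorphism class of $\mathcal{G}_p$; thus $\mathcal{G}_p \cong \mathcal{G}_q$ forces $p = q$. Finally, since $v_{\kd_+}$ is primitive with $v_{\kd_+}^2 = 0$ and $\mathcal{A}_+$ is $v_{\kd_+}$-generic, Mukai's theorem identifies $\sM_{\kd_+,\mathcal{A}_+}^s(v_{\kd_+})$ with a smooth irreducible projective holomorphic symplectic surface, hence a $K3$ surface (as its dimension is $2$ by Proposition~\ref{qui}). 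Any injective holomorphic map between compact connected smooth surfaces of the same dimension is finite of degree one onto its image; openness from the inverse function theorem and closedness from compactness, together with irreducibility of the target, force $g$ to be surjective, hence a biholomorphism.

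The main obstacles I anticipate are (i) the explicit verification of the vanishings $H^0(\mathcal{L}_+) = 0$ and $H^2(\mathcal{L}_+^*) = 0$, which simultaneously ensure that the Hartshorne--Serre construction runs and that $h^0(\mathcal{G}_p) = 1$ (and therefore that $g$ is injective); and (ii) the construction of a genuine holomorphic family $\wt{\mathcal{G}}$ over $\kd_+ \times \kd_+$, which may need an auxiliary twist by a line bundle pulled back from the first factor but does not affect $g$ at the level of isomorphism classes. Once these numerical steps are in place, the passage from injectivity to biholomorphism is essentially formal, given the $K3$ structure on the target moduli space.
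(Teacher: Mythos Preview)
The paper under review is a survey and does not itself contain a proof of this proposition; it simply cites \cite[Lemma~4.7 \& Proposition~4.8]{Menet2017} and moves on. There is therefore no in-text proof to compare your proposal against.

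That said, your outline is the natural one and is essentially the argument carried out in the cited reference: apply Hartshorne--Serre on the $K3$ surface to the length-one subscheme $\{p\}$ with the restricted line bundle $\mathcal{L}_{+|\kd_+}$, check $\mu$-stability via the Hoppe-type criterion, recover $p$ from $\mathcal{G}_p$ as the zero locus of its (up to scale) unique section to get injectivity, and then invoke Mukai's theorem on moduli of sheaves with isotropic primitive Mukai vector to identify the target as a $K3$ surface, whence an injective morphism between irreducible projective surfaces is an isomorphism. Your anticipated obstacles (i) and (ii) are exactly the points that require work in \cite{Menet2017}: the relevant vanishings (in particular $h^0(\mathcal{L}_{+|\kd_+})=0$, which on the $K3$ reduces to showing that neither $H_+-G_+$ nor $G_+-H_+$ is effective on $\kd_+$) and the relative construction over the diagonal. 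One minor sharpening: for the final step you do not really need the inverse-function/open-closed argument; once the target is known to be an irreducible projective surface, an injective morphism from a projective surface is automatically birational onto its image and, between smooth $K3$ surfaces, Zariski's Main Theorem (or simply that a birational morphism of smooth minimal surfaces is an isomorphism) finishes at once.
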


Now let
$\mathcal{G} \in  \sM_{\kd_+,\mathcal{A}_+}^{s}(v_{\kd_+})$ and
 $V \subset H^1(\kd_+, \Endr_0(\mathcal{G}))$.
 From Proposition \ref{iso}, there is $p\in\kd_+$ such that $\mathcal{G}=\mathcal{G}_p$ and let $V'=(\rd g)_p ^{-1}(V)$.
 Since ${-}K_{X_+|\kd_+}$ is very ample (see Example \ref{ex:2conics}), Lemma \ref{lem:wiggle} allows the choice of
a smooth base locus curve $\sC \in |{-}K_{X_+|\kd_+}|$ such that $p\in \sC$ and $T_p \sC = V'$.
By Proposition \ref{prop:exa}, we can find a family  $\{(\cF_+)_q\to Z \mid q \in \sC\}$ of holomorphic bundles parametrised by $\sC$, with prescribed topology (\ref{eq: Mukai vector v+}) and $(\cF_{+|\kd})_q=\mathcal{G}_q$
.
Such a bundle $\cF_+$ has therefore all the properties claimed in Theorem  \ref{thm:summaryintro}.

\begin{cor}[{\cite[Corollary 6.1]{Menet2017}}]
\label{cor:summarybis}
In the context of Example \ref{ex:2conics}, for every bundle
$\mathcal{G} \in  \sM_{\kd_+,\mathcal{A}_+}^{s}(v_{\kd_+})$ and
every complex line $V \subset H^1(\kd_+, \Endr_0(\mathcal{G}))$, there are
a smooth curve $\sC_+ \in |{-}K_{X_+|\kd_+}|$ and an asymptotically stable
and inelastic vector bundle $\cE_+ \to Z_+$ such that $\cE_{+|\kd_+} = \mathcal{G}$
and the restriction map
$$
\res : H^1(Z_+, \Endr_0(\cE_+)) \to H^1(\kd_+, \Endr_0(\mathcal{G}))
$$ 
has
image $V$.
\end{cor}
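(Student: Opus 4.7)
The plan is to recognise that this corollary is essentially a cosmetic repackaging of Theorem \ref{thm:summaryintro}, with two small additions: the asymptotic stability of the resulting bundle, and the change of notation from $\cF_+$ to $\cE_+$. I would therefore apply Theorem \ref{thm:summaryintro} directly and then invoke Proposition \ref{prop:exa} to obtain asymptotic stability, setting $\cE_+:=\cF_+$.

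In more detail, I would first unwind the input $(\mathcal{G},V)$ into geometric data on $\kd_+$. Use the isomorphism $g\co \kd_+\to \sM^{s}_{\kd_+,\mathcal{A}_+}(v_{\kd_+})$ of Proposition \ref{iso} to identify $\mathcal{G}$ with a point $p\in\kd_+$, and pull back the line $V\subset H^1(\kd_+,\Endr_0(\mathcal{G}))\iso T_{\mathcal{G}}\sM^{s}_{\kd_+,\mathcal{A}_+}(v_{\kd_+})$ via $(\rd g)_p$ to a complex line $V'\subset T_p\kd_+$. Since $X_+=\P^1\times\P^2$ has very ample ${-}K_{X_+|\kd_+}$ (cf.\ Example \ref{ex:2conics}), Lemma \ref{lem:wiggle} then supplies a smooth base locus curve $\sC_+\in |{-}K_{X_+|\kd_+}|$ passing through $p$ with $T_p\sC_+=V'$.

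Next, I would apply the Hartshorne--Serre construction of Proposition \ref{prop:exa}: the exceptional fibre $\ell$ of $\wtilde{\sC_+}\to\sC_+$ over $p$, together with the line bundle $\mathcal{L}_+=\mathcal{O}_{Z_+}(-\kd_+-G_++H_+)$, produces a rank $2$ asymptotically stable and inelastic bundle $\cF_+\to Z_+$ satisfying $\cF_{+|\kd_+}=\mathcal{G}_p=\mathcal{G}$ by Proposition \ref{iso}. Setting $\cE_+:=\cF_+$ then fulfils every structural requirement of the corollary.

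The only point that requires genuine content, rather than routine citation, is the equality $\im(\res)=V$. This is precisely what Theorem \ref{thm:summaryintro} delivers: the family $\{(\cF_+)_q\to Z_+ \mid q\in\sC_+\}$ is parameterised by $\sC_+$ itself, and its restriction to $\kd_+$ lands in $\sM^{s}_{\kd_+,\mathcal{A}_+}(v_{\kd_+})$ as a one-parameter subfamily whose tangent at $p$ transports by $g$ to the chosen line $V$. Concretely, one would want to verify that the derivative at $p$ of the parameterisation $q\mapsto (\cF_+)_q$ coincides, after applying the restriction $\res$ and the identification $(\rd g)_p^{-1}$, with the inclusion $T_p\sC_+\hookrightarrow T_p\kd_+$. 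This compatibility between the moduli-theoretic map $g$ and the concrete Hartshorne--Serre parameterisation is the substantive step, and is the one piece I would re-examine most carefully — although it has already been established in the proof of Theorem \ref{thm:summaryintro} summarised in \S\ref{sec moduli assoc to F|S}, after which the corollary follows immediately.
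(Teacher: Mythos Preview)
Your proposal is correct and follows essentially the same approach as the paper: the corollary is indeed an immediate consequence of Theorem~\ref{thm:summaryintro} together with the asymptotic stability and inelasticity already supplied by Proposition~\ref{prop:exa}, and the paper's own argument in \S\ref{sec moduli assoc to F|S} proceeds through exactly the sequence Proposition~\ref{iso} $\to$ Lemma~\ref{lem:wiggle} $\to$ Proposition~\ref{prop:exa} that you outline. One cosmetic remark: the paper later sets $\cE_\pm := \cF_\pm \otimes \mathcal{R}_\pm$ for suitable line bundles $\mathcal{R}_\pm$ (cf.\ the introduction), but since $\Endr_0(\cF_+ \otimes \mathcal{R}_+) = \Endr_0(\cF_+)$ and twisting preserves both stability and inelasticity, your choice $\cE_+ := \cF_+$ is perfectly valid for establishing the corollary as stated.
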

Let 
$$
\cE_-:=\cF_-\otimes\mathcal{O}_{Z_-}(-H_-+2G_-).
$$
\begin{cor}[{\cite[Corollary 6.2]{Menet2017}}]
\label{cor:summarybis2}
In the context of Example \ref{ex:2conics2}, there exists a family of asymptotically
stable and inelastic vector bundles $\{\cE_- \to Z_-\}$, parametrised by the
set of the lines in $X_-$ of class $h_-$, such that 
$\cE_{-|\kd_-}\in\sM_{\kd_-,\mathcal{A}_-}^{s}(v_{\kd_-})$.
\end{cor}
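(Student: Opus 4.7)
The plan is to deduce Corollary~\ref{cor:summarybis2} directly from Proposition~\ref{prop:exa2} by a twisting argument, together with an easy parameter count for the lines of class $h_-$ on $X_-$. First, for each line $\sS\subset X_-$ with $[\sS]=h_-$, Theorem~\ref{thm: Hartshorne-Serre} and Proposition~\ref{prop:exa2} produce a rank~$2$ Hartshorne--Serre bundle $\cF_-=\cF_-(\sS)\to Z_-$ with $c_1(\cF_-)=G_-$ and vanishing locus $\sS$, which is asymptotically stable and inelastic. Varying $\sS$ through the family of lines of class $h_-$ in $X_-$ yields a family $\{\cF_-(\sS)\}$ of such bundles, and I would then set $\cE_-:=\cF_-\otimes\mathcal{O}_{Z_-}(-H_-+2G_-)$ to obtain the desired family.

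Next I would verify the three properties claimed. \emph{Asymptotic stability}: tensoring a $\mu$-stable bundle on the polarised $K3$ surface $(\kd_-,\mathcal{A}_-)$ by a line bundle preserves $\mu$-stability (it merely shifts the slope), so $\cE_-|_{\kd_-}=\cF_-|_{\kd_-}\otimes\mathcal{O}_{Z_-}(-H_-+2G_-)|_{\kd_-}$ is $\mathcal{A}_-$-$\mu$-stable. \emph{Inelasticity}: since $\Endr_0(\cE_-)=\Endr_0(\cF_-)$ (twisting by a line bundle does not affect trace-free endomorphisms), one has
\[
H^1(Z_-,\Endr_0(\cE_-)(-\kd_-))=H^1(Z_-,\Endr_0(\cF_-)(-\kd_-))=0
\]
by Proposition~\ref{prop:exa2}. \emph{Mukai vector}: a routine Chern-class computation using $c_1(\cF_-)=G_-$ and the values of $c_2(\cF_-)$ and $c_i(\mathcal{O}_{Z_-}(-H_-+2G_-))$ shows that the Mukai vector of $\cE_-|_{\kd_-}$ agrees with $v_{\kd_-}$ chosen to be compatible, on the $\kd_-$ side, with the matching condition dictated by $c_1(\cE_+|_{\kd_+})$ from Corollary~\ref{cor:summarybis}. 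The twist by $\mathcal{O}_{Z_-}(-H_-+2G_-)$ is precisely engineered so that, under the hyperkähler identification of $H^2(\kd_+;\Z)\cong H^2(\kd_-;\Z)$, the restriction $c_1(\cE_-|_{\kd_-})$ lands in $N_0$ and coincides with $\hkr_*c_1(\cE_+|_{\kd_+})$.

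Finally, for the parametrisation statement I would observe that the lines of class $h_-$ in $X_-$ form a non-trivial family (one can describe them explicitly via the double cover $\pi\co X_-\to\P^1\times\P^2$ and the fact that $2h_-$ pulls back from a line class in $\P^1\times\P^2$), and the Hartshorne--Serre construction depends algebraically on $\sS$, yielding the desired algebraic family $\{\cE_-\to Z_-\}$. The main potential obstacle, should one wish to grind through it, is the Chern-class bookkeeping that verifies both (i) the Mukai vector of $\cE_-|_{\kd_-}$ is as required and (ii) the parametrising family of lines of class $h_-$ is non-empty and genuinely parameterises distinct bundles; everything else is formal from Proposition~\ref{prop:exa2} together with the elementary observation that tensoring by a line bundle preserves $\Endr_0$ and $\mu$-stability.
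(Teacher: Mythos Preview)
Your proposal is correct and follows precisely the approach implicit in the paper: the survey defines $\cE_-:=\cF_-\otimes\mathcal{O}_{Z_-}(-H_-+2G_-)$ immediately before stating the corollary, and the asymptotic stability and inelasticity of $\cF_-$ (carried by the citations to Propositions~4.5 and~5.10 of \cite{Menet2017} bundled into Proposition~\ref{prop:exa2}) transfer to $\cE_-$ exactly by the invariance of $\Endr_0$ and of $\mu$-stability under line-bundle twists, as you argue. The only point worth flagging is that Proposition~\ref{prop:exa2} as written in this survey omits the words ``asymptotically stable and inelastic'' (unlike its counterpart Proposition~\ref{prop:exa}); you are right to read those properties into it via the cited propositions, but you should say so explicitly rather than attribute them to the statement as printed here.
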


We fix a representative $\cE_- \to Z_-$ in the family of holomorphic bundles
 from Corollary  \ref{cor:summarybis2},
to be matched by a bundle $\cE_+ \to Z_+$ given by Corollary \ref{cor:summarybis},
so that asymptotic stability and inelasticity hold from the outset.

It remains to address compatibility and transversality.
Since the chosen configuration for $\hkr$ ensures that
$\hkr^*$ identifies the Mukai vectors of $\cE_{\pm|\kd_\pm}$, 
it induces a map $\bar \hkr^* :\sM_{\kd_-,\mathcal{A}_-}^{s}(v'_{\kd_-})\to
\sM_{\kd_+,\mathcal{A}_+}^{s}(v'_{\kd_+})$.
In particular, the target moduli space is $2$-dimensional, by Proposition
\ref{qui}, and $\hkr^* (\im \res_-)$ is  $1$-dimensional, since the bundles
$\{\cE_-\}$ are parametrised by lines of fixed class $h_-$. So indeed we apply
Corollary \ref{cor:summarybis} with $\mathcal{G}= \bar \hkr^* (\cE_{-|\kd_-})$
and any choice of a direct complement subspace $V$ such that 
$$
V\oplus \bar
\hkr^* (\im \res_-)=H^1\left(\kd_+, \Endr_0\left(\bar \hkr^* (\cE_{-|\kd_-})\right)\right).
$$

Denoting by $\cM_{\kd_\pm}(v)$ the moduli space of ASD instantons over
$\kd_\pm$ with Mukai vector $v$,
the maps $f_\pm$ (cf. (\ref{eq: isomorphism
f})) in Theorem \ref{thm:HenriqueThomas}  are the linearisations of the Hitchin-Kobayashi isomorphisms 
$$
\sM^{s}_{\kd_\pm,\mathcal{A}_\pm}(v'_{\kd\pm})
\simeq
\cM_{\kd_\pm}(v'_{\kd\pm}).
$$
Therefore, our bundles  $\cE_\pm$ indeed satisfy $A_{\infty,+}=\bar \hkr^*
A_{\infty,-}$ for the corresponding
instanton connections. Moreover, by linearity,   $\lambda_+(H^1(Z_+, \Endr_0(\cE_+)))$
is transverse in
$T_{A_{\infty,+}} \cM_{\kd_+}(v_{\kd_+}')$
 to the image of the real $2$-dimensional subspace  $\lambda_-(H^1(Z_-,\Endr_0(\cE_-)))
\subset
T_{A_{\infty,-}} \cM_{\kd_-}(v_{\kd_-}')$ under the linearisation of $\bar
\hkr^*$.

\bibliography{Bibliografia-2018-06}
\end{document}